\numberwithin{equation}{section}
\newtheorem{thm}{Theorem}[section]
\newtheorem{lem}[thm]{Lemma}
\newtheorem{prop}[thm]{Proposition}
\newtheorem{cor}[thm]{Corollary}
\newtheorem{rmk}{Remark}[section]
\newcommand{\Z}{\mathbb{Z}}
\title{Squarefree numbers in arithmetic progressions}
\author{Ramon M. Nunes}
\address{Universit\'e Paris Sud, Laboratoire de math\'ematiques\\
Campus d'Orsay\\ 91405 Orsay Cedex\\ France}
\email{ramon.moreira@math.u-psud.fr}
\date{\today}
\begin{document}

\begin{abstract}
We give asymptotics for correlation sums linked with the distribution of squarefree numbers in arithmetic progressions over a fixed modulus. As a particular case we improve a result of Blomer \cite{Blomer} concerning the variance.
\end{abstract}
\maketitle

\section{Introduction}

For a positive real number $X$ and positive integers $a,q$ with $(a,q)=1$, let $E(X,q,a)$ be defined by the formula

\begin{equation}\label{E}
\sum_{\substack{n\leq X\\ n\equiv a\!\!\!\pmod q}}\mu^2(n)=\dfrac{6}{\pi^2}\prod_{p\mid q}\left(1-\frac{1}{p^2}\right)^{-1}\frac{X}{q} + E(X,q,a),
\end{equation}
where, as usual, $\mu$ is the M\"obius function.
In \eqref{E} the first term heuristically appears to be a good approximation of the number of squarefree integers $\leq X$ congruent to $a\pmod{q}$. In this paper, we are concerned with the so called error term $E(X,q,a)$. Trivially, one has
\begin{equation}\label{simple}
\left\vert E(X,q,a)\right\vert\leq \dfrac{X}{q}+1,
\end{equation}
while in \cite{Hooley}, Hooley proved

\begin{equation}\label{Hoo}
E(X,q,a)=O_{\epsilon}\left(\left(\dfrac{X}{q}\right)^{1/2}+q^{1/2+\epsilon}\right),
\end{equation}
where the $O_{\epsilon}$-constant depends only on $\epsilon>0$ arbitrary. This is the best result available for fixed $a$. Futhermore, formula \eqref{Hoo} gives an asymptotic formula for the left-hand side of \eqref{E} for $q\leq X^{\frac{2}{3}-\epsilon}$(We believe that such an asymptotic formula should hold for $q\leq X^{1-\epsilon}$ and it is a challenging problem to go beyond $X^{\frac{2}{3}-\epsilon}$ for a general $q$, in particular when $q$ is prime). The situation is quite similar to the equivalent problem for the divisor function $d(n)$, which is, roughly, justified by the appearance of exponential sums on it.\\
While for the divisor function we consider the classical Kloosterman sums

$$K(a,b;q):=\sum_{\substack{1\leq x\leq q-1\\ (x,q)=1}}e\left(\dfrac{ax+b\overline{x}}{q}\right),$$
for the $\mu^2$ function, we have to deal with

$$K_2(a,b;q):=\sum_{\substack{1\leq x\leq q-1\\ (x,q)=1}}e\left(\dfrac{ax+b\overline{x}^2}{q}\right),$$
where $e(x)=e^{2\pi ix}$ and $\overline{x}$ is the multiplicative inverse of $x\!\!\pmod{q}$.\\
Blomer \cite{Blomer} considered a certain average over the residue classes $\!\!\!\pmod q$. More precisely, he considered the following second moment(variance) of the $E(X,q,a)$

\begin{equation}\label{V}
\mathcal{M}_2(X,q)=\sum_{a\!\!\!\pmod q}\!\!\!\!\!{}^{*}\;\left|E(X,q,a)\right|^2,
\end{equation}
where the $*$ symbol means we only sum over the classes that are relatively prime to $q$.\\
In \cite[Theorem 1.3]{Blomer}, he showed that

\begin{equation}\label{BL}
\mathcal{M}_2(X,q)\ll X^{\epsilon}\left(X+\min\left(\dfrac{X^{5/3}}{q},q^2\right)\right)
\end{equation}
holds for every $\epsilon>0$, uniformly for $1\leq q\leq X$.\\
Several years before, Croft \cite{Croft} considered a variation of $\mathcal{M}_2(X,q)$ by summing not only over the classes relatively prime to $q$ but over all the classes $\!\!\!\pmod q$. Let

\begin{equation*}
\mathcal{M}_2^{'}(X,q)=\sum_{a\!\!\!\pmod q}\left\{\sum_{\substack{n\leq X\\n\equiv a\!\!\!\pmod q}}\mu^2(n)-\dfrac{\mu^2(d)q_0}{\varphi(q_0)}\dfrac{6}{\pi^2}\prod_{p\mid q}\left(1+p^{-1}\right)^{-1}\dfrac{X}{q}\right\}^2,
\end{equation*}
where $d=(a,q)$ and $q_0=q/d$. The last term between the curly brackets on the expression above can be seen as the expected value of the sum

$$
\sum_{\substack{n\leq X\\n\equiv a\!\!\!\pmod q}}\mu^2(n)
$$
when $a$ is not necessarily relatively prime to $q$. Observe that it reduces to the first term on the right-hand side of \eqref{E} whenever $(a,q)=1$.

By doing an extra average over $q$, Croft \cite[Theorem 2]{Croft} proved the following formula

\begin{equation}\label{croft-thm}
\sum_{q\leq Q}\mathcal{M}_2^{'}(X,q)=BX^{1/2}Q^{3/2}+O\left(X^{2/5}Q^{8/5}\log^{13/5} X+X^{3/2}\log^{7/2}X\right)\text{, }\big(Q\leq X\big)
\end{equation}
where $B$ is an explicit constant. For $Q\geq X^{2/3+\epsilon}$, \eqref{croft-thm} above gives us an asymptotic formula for
$$
\sum_{q\leq Q}\mathcal{M}_2^{'}(X,q)
$$
which looks like the classical Barban-Davenport-Halbertam for primes with a main term (see Montgomery \cite{Montgomery}).
This result was further improved by several authors. We mention the works of Warlimont \cite{Warl2}, Br\"udern \textit{et al}. \cite{BGPVW} and Vaughan \cite{Vaughan}. It is worth mentioning that the later two results deal with the more general case of $k$-free numbers.

Ignoring for the moment the difference between $\mathcal{M}_2^{'}(X,q)$ and $\mathcal{M}_2(X,q)$, formula \eqref{croft-thm} above can be interpreted as saying that, at least on average over $q\leq Q$, the value of $\mathcal{M}_2(X,q)$ is much smaller than the bound given by \eqref{BL}.
In this paper we investigate if such phenomenon can be observed without the need of the extra average over $q$.
This is indeed possible for $q$ large enough relatively to $X$. Our main theorem goes in that direction

\begin{thm}\label{main}
Let $\mathcal{M}_2(X,q)$ be defined as in \eqref{V} and $\epsilon>0$ arbitrary. Then, uniformly for $q\leq X$, we have

\begin{equation}\label{maineq}
\mathcal{M}_2(X,q)= C\prod_{p\mid q}\bigg(1+2p^{-1}\bigg)^{-1}X^{1/2}q^{1/2}+O\left(d(q)X^{1/3}q^{2/3}+X^{23/15}q^{-13/15}(\log X)^{15}\right),
\end{equation}
where
\begin{equation}\label{C-}
C=\dfrac{\zeta\left(\frac{3}{2}\right)}{\pi}\prod_{p}\bigg(\frac{p^3-3p+2}{p^3}\bigg)=0.167\ldots
\end{equation}
and the $O$-constant is absolute.
\end{thm}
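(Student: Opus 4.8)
The plan is to reduce $\mathcal{M}_2(X,q)$ to complete exponential sums of the $K_2$-type advertised above and to extract the main term from their diagonal. First I would insert $\mu^2(n)=\sum_{d^2\mid n}\mu(d)$ and perform the inner lattice-point count: writing $n=d^2m$, the congruence $n\equiv a\pmod q$ with $(a,q)=1$ forces $(d,q)=1$ and $m\equiv a\overline{d}^{\,2}\pmod q$, so that with $\psi(t)=\{t\}-\tfrac12$,
\begin{equation*}
\sum_{\substack{n\le X\\ n\equiv a\,(q)}}\mu^2(n)=\sum_{\substack{d\le\sqrt X\\(d,q)=1}}\mu(d)\left(\frac{X}{d^2q}-\psi\!\left(\frac{X/d^2-a\overline d^{\,2}}{q}\right)+\psi\!\left(\frac{-a\overline d^{\,2}}{q}\right)\right).
\end{equation*}
Extending $\sum_{(d,q)=1}\mu(d)/d^2$ to infinity reproduces exactly the main term of \eqref{E} up to a tail of size $O(\sqrt X/q)$, so that $E(X,q,a)$ equals, up to a controlled error, the sawtooth sum $-\sum_{d}\mu(d)\,\psi(\cdots)$ whose entire dependence on $a$ sits in $a\overline d^{\,2}\bmod q$.

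Next I would expand each $\psi$ into its truncated Fourier series $\psi(t)=-\frac{1}{2\pi i}\sum_{0<|h|\le H}h^{-1}e(ht)+O(\min(1,(H\|t\|)^{-1}))$ with $H$ a suitable power of $X$, and open the square (using that $E$ is real). Summing over the reduced residues $a$ collapses the two $a$-dependent phases into $\sum_a^{*}e\big(a(h_1\overline{d_1}^{\,2}-h_2\overline{d_2}^{\,2})/q\big)$, a Ramanujan sum in the argument $h_1\overline{d_1}^{\,2}-h_2\overline{d_2}^{\,2}$; reorganising the summation over $d_1,d_2$ then exhibits the surviving phases as incomplete sums of the shape $K_2$. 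The diagonal, on which $h_1\overline{d_1}^{\,2}\equiv h_2\overline{d_2}^{\,2}\pmod q$ and the Ramanujan sum equals $\varphi(q)$, is where the main term lives: evaluating the resulting sum over $d_1,d_2,h$ should yield the claimed $C\prod_{p\mid q}(1+2p^{-1})^{-1}X^{1/2}q^{1/2}$, the factor $X^{1/2}$ and the constant $\zeta(3/2)/\pi$ emerging from the $h$-summation and the Euler product from the two coprimality conditions $(d_i,q)=1$. I would cross-check the constant by noting that $p^3-3p+2=(p-1)^2(p+2)$, so that the local factor at $p\mid q$ collapses to $(1-p^{-1})^2$, exactly the square of the coprimality density expected from a pair of variables.

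The main obstacle is the off-diagonal estimate. There one must bound the double sum over $d_1,d_2\le\sqrt X$ and $0<|h_i|\le H$ of $h_1^{-1}h_2^{-1}$ times the oscillating factor $e\big((h_1X/d_1^2-h_2X/d_2^2)/q\big)$ and the associated complete $K_2$-sum; it is the balance between the Weil-type square-root cancellation in these sums, the truncation length $H$, and the ranges of $d_1,d_2$ that produces the two error terms in \eqref{maineq}. I expect $d(q)X^{1/3}q^{2/3}$ to come from the near-diagonal and degenerate frequencies, the divisor factor $d(q)$ arising from counting solutions of the governing congruence through its divisors, while $X^{23/15}q^{-13/15}(\log X)^{15}$ should come from optimising $H$ against an averaged bound for the sums of $K_2$, the logarithmic powers being the cost of the truncation and of the harmonic weights $h_1^{-1}h_2^{-1}$. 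Keeping these savings uniform for all $q\le X$, rather than merely applying Weil's bound termwise, is the delicate point, and it is the reason a second-moment or large-sieve input for the $K_2$-sums will be required.
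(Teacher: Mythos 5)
Your proposal follows a genuinely different route from the paper --- a Hooley-style Fourier/exponential-sum analysis of $E(X,q,a)$ itself, rather than the paper's dispersion-method decomposition $\mathcal{M}_2=2S[1](X,q)-\varphi(q)(C(q)X/q)^2+\cdots$ followed by a lattice-point count. Your main-term heuristic is sound: the diagonal of the squared sawtooth sum does produce $X^{1/2}q^{1/2}$, and your cross-check of the constant via $p^3-3p+2=(p-1)^2(p+2)$, so that the local factor at $p\mid q$ collapses to $(1-p^{-1})^2$, is correct and matches \eqref{C-}. But as written this is a programme, not a proof, and the part you defer is the entire content of the theorem.

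The concrete gap is the off-diagonal bound. After summing over $a$ the Ramanujan sum $c_q(h_1\overline{d_1}^{\,2}-h_2\overline{d_2}^{\,2})$ opens into divisors $e\mid q$, and detecting the congruence $h_1d_2^2\equiv h_2d_1^2\pmod e$ by additive characters leaves you with incomplete sums of $e(xh\overline{d}^{\,2}/e)$ over $d\le\sqrt X$. In the range where the theorem is actually an asymptotic, namely $X^{31/41+\epsilon}\le q\le X^{1-\epsilon}$, the length $\sqrt X$ of the $d$-sum is far smaller than the modulus $e$ when $e$ is close to $q$, so completion plus Weil's bound for $K_2$ saves essentially nothing; this is the same obstruction that confines Hooley's individual bound \eqref{Hoo} to $q\le X^{2/3-\epsilon}$, and averaging over $h_1,h_2$ with harmonic weights does not obviously repair it. You acknowledge that ``a second-moment or large-sieve input for the $K_2$-sums will be required,'' but no such input is specified, and there is no reason to expect it to reproduce the exponents $X^{23/15}q^{-13/15}$. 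The paper avoids this regime entirely: it inserts $\mu^2(n)=\sum_{d^2\mid n}\mu(d)$ inside the correlation sum $S[m]$, extracts the secondary main term from the small-$d$ range via the sawtooth integral $G(Y,r)$ of Lemma \ref{lem3} (a Croft-type argument, with no character sums at all), and bounds the large-$d$ range by Heath-Brown's square sieve, where the exponential sums $S(u,p_1p_2;\lambda,\mu,\nu)$ that arise are \emph{complete} sums modulo $up_1p_2$ with $p_1p_2$ of a size chosen by the sieve, so Weil applies with full force. To make your route work you would need to supply the missing off-diagonal estimate, and also to handle the near-diagonal terms where $(h_1d_2^2-h_2d_1^2,q)$ is large but the argument is nonzero, which your sketch passes over.
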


As pointed out to the author by Professor Rudnick, the same constant was found by Hall \cite{Hall} on the main term of the second moment for the problem of squarefree numbers in short intervals.\\
Our technique is more general since we can also provide a glimpse on the correlation between $E(X,q,a)$ and $E(X,q,ma)$ for fixed $m$ with $(m,q)=1$. For $X>0$, $m,q$ integers such that $m\neq 0$, $(m,q)=1$, $q\geq 1$ we define
\begin{equation}\label{m2m}
\mathcal{M}_2[m](X,q)= \sum_{a\!\!\!\pmod q}\!\!\!\!\!{}^{*}\;E(X,q,a)E(X,q,ma).
\end{equation}
The Theorem \ref{main} above can be deduced as a consequence of the following

\begin{thm}\label{main[m]}
Let $m$ be a nonzero integer of arbitrary sign. Also let $\mathcal{M}_2[m](X,q)$ be defined as in \eqref{m2m}. Then, uniformly for $q\leq X$, $(m,q)=1$ we have

\begin{multline}\label{maineq[m]}
\mathcal{M}_2[m](X,q)= \frac{C}{2}\Gamma_{\operatorname{an}}(m)\Gamma_{\operatorname{ar}}(m)\prod_{p\mid q}\bigg(1+2p^{-1}\bigg)^{-1}X^{1/2}q^{1/2}\\
+O_{m}\left(d(q)X^{1/3}q^{2/3}+X^{23/15}q^{-13/15}(\log X)^{15}\right),
\end{multline}
where $C$ is as in \eqref{C-}, where the analytic factor $\Gamma_{\operatorname{an}}(m)$ is defined by
\begin{equation}\label{Gamma-an}
\Gamma_{\operatorname{an}}(m):=
\begin{cases} 
\dfrac{\sqrt{m}+1-\sqrt{m-1}}{m}\text{, if }m>0,\\
\;\\
\dfrac{\sqrt{1-m}-\sqrt{-m}-1}{-m}\text{, if }m<0,
\end{cases}
\end{equation}
and the arithmetic factor $\Gamma_{\operatorname{ar}}(m)$ is
\begin{equation}\label{Gamma-ar}
\Gamma_{\operatorname{ar}}(m) := \prod_{p^2\mid m}\left(-\frac{p^2-1}{p+2}\right)\prod_{\substack{p\mid m\\p^2\nmid m}}\left(1+\frac{p+p^{1/2}+1}{p^{3/2}+p^{1/2}+1}\right)^{-1};
\end{equation}
and the $O_{m}$-constant depends at most on $m$.
\end{thm}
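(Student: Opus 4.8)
The plan is to turn \eqref{m2m} into a bilinear sawtooth sum and to isolate one arithmetically defined diagonal. Writing $\mu^2(n)=\sum_{d^2\mid n}\mu(d)$ and counting the integers $k\le X/d^2$ with $d^2k\equiv a\pmod q$ (which forces $(d,q)=1$), one obtains, with $\psi(t):=t-\lfloor t\rfloor-\tfrac12$ the sawtooth function and up to an $a$-independent tail of size $O(\sqrt X/q)$ coming from $d>\sqrt X$ (which lies within the error term),
\begin{equation*}
E(X,q,a)=-\sum_{\substack{d\le\sqrt X\\(d,q)=1}}\mu(d)\left[\psi\!\left(\frac{a\,\overline{d^2}}{q}\right)+\psi\!\left(\frac{X/d^2-a\,\overline{d^2}}{q}\right)\right].
\end{equation*}
Inserting this expression for both $E(X,q,a)$ and $E(X,q,ma)$ opens $\mathcal M_2[m](X,q)$ into a sum over $d_1,d_2\le\sqrt X$ coprime to $q$, weighted by $\mu(d_1)\mu(d_2)$, of an average over $a\pmod q$, $(a,q)=1$, of a product of two sawtooths, one built from $\overline{d_1^{\,2}}$ and the other from $m\,\overline{d_2^{\,2}}$.

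Next I would linearise, replacing each $\psi$ by its truncated Fourier expansion at height $H$, with its standard $L^1$-control on the truncation error, so that the sawtooth products become short sums $\sum_{0<|h_1|,|h_2|\le H}\frac{1}{h_1h_2}e(\cdots)$. Executing the sum over $a$ collapses the four resulting terms into Ramanujan sums $c_q\!\left(h_1\overline{d_1^{\,2}}\pm m\,h_2\overline{d_2^{\,2}}\right)$, each carrying a phase $e(\beta X/q)$ with $\beta\in\{0,\ h_1/d_1^2,\ h_2/d_2^2,\ h_1/d_1^2+h_2/d_2^2\}$. The genuinely complete sums that must be controlled in the non-diagonal ranges are, after opening $c_q$ and resolving the sum over $d$, of the Kloosterman type $K_2(a,b;q)$ singled out in the introduction.

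The main term should arise only from the resonance on which the Ramanujan sum equals $\varphi(q)$ and the phase is stationary, namely the exact solutions of $h_1d_2^2\pm m\,h_2 d_1^2=0$. Parametrising these through $\delta=(d_1,d_2)$ and factoring out square divisors, the two sign choices combine so that the constant $\tfrac16$ coming from the $+$ resonance and the second Bernoulli term $B_2(\{X/(\delta^2q)\})$, $B_2(x)=x^2-x+\tfrac16$, coming from the $-$ resonance merge into the weight $\{X/(\delta^2q)\}\bigl(1-\{X/(\delta^2q)\}\bigr)$. The sums over the freed variables factor into an Euler product, yielding $\prod_{p\mid q}(1+2p^{-1})^{-1}$ together with the product over $p\nmid m$ in \eqref{C-}, while the interaction of $m$ with the square divisors produces the local factor $\Gamma_{\operatorname{ar}}(m)$ of \eqref{Gamma-ar} (separating the primes with $p^2\mid m$ from those with $p\mid m$, $p^2\nmid m$). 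Finally the remaining sum $\sum_{(\delta,q)=1}\mu^2(\delta)(\cdots)\{X/(\delta^2q)\}(1-\{X/(\delta^2q)\})$ is evaluated by expanding the fractional-part weight in its Fourier series and applying Poisson summation (equivalently stationary phase) to $\sum_\delta e(tX/(\delta^2q))$: each frequency $t$ then contributes a factor proportional to $|t|^{1/2}$, which against the $|t|^{-2}$ Fourier coefficient of the weight leaves $\sum_{t\ne0}|t|^{-3/2}=2\zeta(3/2)$. This is simultaneously the origin of the $\zeta(3/2)$ in $C$, of the $X^{1/2}q^{1/2}$ order of magnitude (from $\varphi(q)$ against the scale $\delta\sim\sqrt{X/q}$ of the stationary range), and, once $m$ is carried through the archimedean integral, of the factor $\Gamma_{\operatorname{an}}(m)$ of \eqref{Gamma-an}, whose $\sqrt m,\sqrt{m-1}$ shape records the two interval scales $\sqrt{X/d_1^2}$ and $\sqrt{mX/d_2^2}$. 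Specialising to $m=1$, where $\Gamma_{\operatorname{an}}(1)=2$ and $\Gamma_{\operatorname{ar}}(1)=1$, recovers Theorem~\ref{main}.

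The hard part will be the off-diagonal estimate: bounding, uniformly for $q\le X$, the contribution of the non-resonant frequencies (where $c_q$ is small but the phase $e(\beta X/q)$ oscillates) and of the Fourier truncation. Here I would appeal to Weil-type square-root cancellation for $K_2(a,b;q)$, with the usual gcd refinements for composite $q$, sum the result carefully over $d_1,d_2,h_1,h_2$, and choose $H$ so as to balance the two competing error shapes. I expect precisely this balancing, together with the attendant divisor-function and logarithmic losses, to generate the twin error terms $d(q)X^{1/3}q^{2/3}$ and $X^{23/15}q^{-13/15}(\log X)^{15}$, and to form the longest and most delicate portion of the proof.
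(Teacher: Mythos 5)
Your route is genuinely different from the paper's. The paper never touches the Fourier expansion of $E(X,q,a)$ itself: it opens the square into the correlation sum $S[m](X,q)=\sum\sum\mu^2(n_1)\mu^2(n_2)$ over $mn_1\equiv n_2\pmod q$ (Theorem \ref{intermediate}), detects squarefreeness by $\mu^2(n)=\sum_{d^2\mid n}\mu(d)$ \emph{inside that correlation}, and splits at a parameter $y$: the range $d\le y$ is handled by elementary congruence counting, producing the density $f_q(\ell,m)$ and then the secondary main term via Lemma \ref{lem3} (where $\zeta(3/2)$ enters through $\int_0^Y\psi(v)v^{-1/2}\,dv$ and the functional equation — the same mechanism as your Poisson-summation step, so your main-term extraction is essentially a dual form of the paper's, and your diagonal $h_1d_2^2\pm mh_2d_1^2=0$ with the three interval scales $X/q$, $(m-1)X/q$, $mX/q$ is a plausible alternative source of $\Gamma_{\operatorname{an}}(m)$ and $\Gamma_{\operatorname{ar}}(m)$). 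What your sketch buys is a cleaner conceptual picture of where the resonances live; what it costs is that the cancellation between the huge main terms in \eqref{developV} must now happen inside your $a$-average rather than being exhibited explicitly as in Proposition \ref{propsum} and Lemma \ref{products}.

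The genuine gap is the off-diagonal estimate, and the tool you name is not the one that does the job. After executing the sum over $a$ and opening the Ramanujan sum, what remains in the non-resonant range is a pure counting problem: the number of quadruples $(d_1,d_2,h_1,h_2)$ with $d_i\le\sqrt X$, $|h_i|\le H$ and $h_1d_2^2+mh_2d_1^2\equiv 0\pmod e$ for $e\mid q$, nonzero. This has exactly the shape of the quantity $\mathcal{N}[m](J,K)$ in \eqref{NJK}, and the critical range is $d_i^2>X/q$, where the congruence has at most $O(1)$ solutions per class and no complete sum over an interval of length $\ge q$ is available to which a Weil bound for $K_2(a,b;q)$ could be applied without lossy completion. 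A careful divisor-function count here yields only the error $O(X^{5/3+\epsilon}q^{-1})$ that the introduction explicitly identifies as the ``rather elementary'' bound, which restricts the asymptotic to $q\ge X^{7/9+\epsilon}$. The exponents $23/15$ and $-13/15$ in \eqref{maineq[m]}, and the choice $y=X^{8/15}q^{2/15}$, are artifacts of Heath-Brown's square sieve (Theorem \ref{H-B}) applied to this counting problem, combined with the auxiliary bound of Lemma \ref{lem-aux} and the interpolation \eqref{minpowers}; they will not emerge from square-root cancellation in $K_2$ plus a choice of $H$. So either you must import the square sieve (or an equivalent) into your framework, or you must weaken the error term and shrink the range of $q$ in the resulting theorem.
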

We state the result for general $m$ but we content ourselves with a complete proof only for $m$ squarefree.\\
Notice that making the choice $m=1$, one has $\Gamma_{\operatorname{an}}(1)=2,\Gamma_{\operatorname{ar}}(1)=1$ and we retrieve Theorem \ref{main}.
If one seeks for uniformity in $m$, the same techniques give an error term of the same order for $m\leq (\log X)^A$ for some constant $A$.\\
Notice that for $X^{8/13+\epsilon}<q<X^{1-\epsilon}$, formula \eqref{maineq} improves the upper bound \eqref{BL}. We have further the following direct consequence of Theorem \ref{main}

\begin{cor}
Let $\mathcal{M}_2(X,q)$ be defined as in \eqref{V} and $\epsilon>0$ arbitrary. Then as $X\rightarrow \infty$, we have

\begin{equation}\label{equiv}
\mathcal{M}_2(X,q) \sim C\prod_{p\mid q}\bigg(1+2p^{-1}\bigg)^{-1}X^{1/2}q^{1/2}
\end{equation}
uniformly for $X^{31/41+\epsilon}\leq q\leq X^{1-\epsilon}$ and where $C$ is as in \eqref{C-}.

\end{cor}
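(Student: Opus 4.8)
The plan is to deduce the equivalence \eqref{equiv} directly from the asymptotic formula \eqref{maineq} of Theorem \ref{main}, by checking that throughout the stated range of $q$ both error terms are of strictly smaller order than the main term. Write
\[
M(X,q)=C\prod_{p\mid q}\bigg(1+2p^{-1}\bigg)^{-1}X^{1/2}q^{1/2}
\]
for the main term. Since $\mathcal{M}_2(X,q)$ is a sum of squares it is nonnegative, and $M(X,q)>0$, so proving $\mathcal{M}_2\sim M$ amounts to showing that the ratio of each $O$-term in \eqref{maineq} to $M(X,q)$ tends to $0$ as $X\to\infty$.

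First I would dispose of the auxiliary factors. The inverse of the arithmetic product satisfies $\prod_{p\mid q}(1+2p^{-1})\le 3^{\omega(q)}=X^{o(1)}$, since $\omega(q)\ll \log q/\log\log q$; likewise $d(q)=X^{o(1)}$ by the divisor bound and $(\log X)^{15}=X^{o(1)}$. Hence all of these are absorbed into an arbitrarily small power $X^{\eta}$, and it suffices to compare the pure powers of $X$ and $q$. Parametrising $q=X^{\theta}$, the first error term contributes
\[
\frac{X^{1/3}q^{2/3}}{X^{1/2}q^{1/2}}=X^{(\theta-1)/6},
\]
which tends to $0$ precisely when $\theta<1$, i.e. for $q\le X^{1-\epsilon}$, while the second error term contributes
\[
\frac{X^{23/15}q^{-13/15}}{X^{1/2}q^{1/2}}=X^{(31-41\theta)/30},
\]
which tends to $0$ precisely when $\theta>31/41$, i.e. for $q\ge X^{31/41+\epsilon}$. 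Combining the two conditions, in the range $X^{31/41+\epsilon}\le q\le X^{1-\epsilon}$ both quotients are bounded by $X^{-c\epsilon}$ for some absolute $c>0$, which, even after multiplication by the $X^{o(1)}$ auxiliary factors, still tends to $0$; this yields \eqref{equiv}.

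Given Theorem \ref{main}, this deduction is essentially routine and carries no genuine analytic obstacle: the exponents $31/41$ and $1$ that delimit the admissible range of $q$ are exactly the thresholds at which the second and first error terms, respectively, cease to be negligible against $M(X,q)$. The only point demanding a moment's care is the one addressed above, namely that the arithmetic factor $\prod_{p\mid q}(1+2p^{-1})^{-1}$ appearing in $M(X,q)$ is \emph{not} bounded below by an absolute constant, yet when inverted costs only a factor $X^{o(1)}$, so that it cannot perturb these critical exponents.
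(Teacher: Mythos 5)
Your deduction is correct and is exactly the (implicit) argument of the paper, which states the corollary as a direct consequence of Theorem \ref{main}: the exponent comparison $X^{(\theta-1)/6}$ and $X^{(31-41\theta)/30}$ for the two error terms is right, and you correctly note that $d(q)$, $(\log X)^{15}$ and $\prod_{p\mid q}(1+2p^{-1})$ are all $X^{o(1)}$ and so cannot shift the thresholds $31/41$ and $1$. Nothing further is needed.
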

\begin{rmk}
The asymptotic formula \eqref{equiv} gives an average order of magnitude of $(X/q)^{\frac{1}{4}+\epsilon}$ for the terms $E(X,q,a)$. This remark goes in the direction of the conjecture due to Montgomery (see \cite[top of the page 145]{Croft}), which we write under the form
$$E(X,q,a)=O_{\epsilon}\left((X/q)^{\frac{1}{4}+\epsilon}\right)\text{, }\epsilon>0\text{ arbitrary}$$
uniformly for $(a,q)=1$, $X^{\theta_1}<q<X^{\theta_2}$ where the values of the constants $\theta_1$ and $\theta_2$ satisfying $0<\theta_1<\theta_2<1$ have to be precised.\\
\end{rmk}
Note that the same phenomenom can be observed in the work of Croft, with an extra average over $q$. Such a conjecture may, of course, be interpreted in terms of the poles of the functions $\frac{L(\chi,s)}{L(\chi^2,2s)}$, where $\chi$ is a Dirichlet character modulo $q$.
The error term $O(X^{23/15+\epsilon}q^{-13/15})$ in \eqref{maineq} comes from the use of the square sieve (see \cite{Heathbrown}). The proof could be simplified by avoiding the use of this sieve, obtaining a worse error term. One can obtain $O(X^{5/3+\epsilon}q^{-1})$ in a rather elementary way. That would imply that equivalence \eqref{equiv} would only hold for $X^{7/9+\epsilon}\leq q\leq X^{1-\epsilon}$.

%
%
\subsection{Discussion about $\Gamma_{\operatorname{an}}(m)$ and $\Gamma_{\operatorname{ar}}(m)$}
${}$\\
Consider the case where $m$ is squarefree. Formula \eqref{maineq[m]} shows that the random variables
$$
{\bf X}:a\!\!\!\!\pmod q\mapsto \dfrac{E(X,q,a)}{(X/q)^{1/2}}\text{ and }{\bf X}_m:a\!\!\!\!\pmod q\mapsto \dfrac{E(X,q,ma)}{(X/q)^{1/2}}
$$
are not asymptotically independent as $X\rightarrow \infty$, $q$ satisfying $X^{31/41+\epsilon}\leq q\leq X^{1-\epsilon}$. This is an easy consequence of the fact that these random variables have asymptotic mean equal to zero (see lemma \ref{lem-bl} below). The fact that ${\bf X}$ and ${\bf X}_m$ are dependent (when $m>0$) can be guessed in a similar way as in \cite[remark 1.8]{FGKM}, by the trivial fact that for a squarefree $n$ such that
$$
n\equiv a\!\!\!\!\!\pmod q, 1\leq n \leq X/m, (n,m)=1,
$$
then $n'=mn$ satisfies
$$
n'\text{ squarefree, }n'\equiv ma\!\!\!\!\!\pmod q, 1\leq n' \leq X.
$$
Such an interpretation obviously fails when $m<0$ or $m$ is not squarefree, which may explain the signs of $\Gamma_{\operatorname{an}}(m)$ and $\Gamma_{\operatorname{ar}}(m)$.
We also remark that the random variable
$$
\frac{1}{\phi(q)}\sum_{m\!\!\!\!\pmod q}\!\!\!\!\!\!\!{}^*\:\:{\bf X}{\bf X}_m
$$
has asmptotic mean zero, again by lemma \ref{lem-bl}.\\

Finally we would like to point out some differences between Theorem \ref{main[m]} and \cite[Theorem 1.5]{FGKM} from which this study was inspired.\\
From \cite[Corollary 1.7.]{FGKM}, the correlation for the divisor function exists if and only if $m>0$, with a correlation coefficient that is always positive. While in our setting, the correlation always exists and the sign depends on the value of $m$, corresponding to the sign of $m$, for $m$ squarefree.\\

In the work of Fouvry {\it et al.}\cite{FGKM} instead of only considering a homothety $a\mapsto ma$, they consider general M\"obius transformations
$$
a\mapsto \gamma(a):=\dfrac{m_1a+m_2}{m_3a+m_4}.
$$

The case of a general linear map $\gamma(a)=m_1a+m_2$ will be treated by a very different method in a posterior paper by the author. The techniques unfortunately do not extend to the reamaining cases, even for the seemingly simple case when $\gamma(a)=1/a$.

\subsection{A double sum over squarefree integers}

Developping the squares in $\mathcal{M}_2[m](X,q)$, we obtain the equality

\begin{equation}\label{developV}
\mathcal{M}_2[m](X,q)=S[m](X,q) - 2C(q)\dfrac{X}{q}\sum_{\substack{n\leq X\\ (n,q)=1}}\mu^2(n)+\varphi(q)\left(C(q)\dfrac{X}{q}\right)^2,
\end{equation}
where
\begin{equation}\label{cq}
C(q)=\dfrac{6}{\pi^2}\displaystyle\prod_{p\mid q}\left(1-\dfrac{1}{p^2}\right)^{-1},
\end{equation}
and $S[m](X,q)$ is the double sum

\begin{equation}\label{S2}
S[m](X,q)=\underset{\substack{n_1,n_2\leq X\\ (n_1n_2,q)=1\\mn_1=n_2\!\!\!\pmod q}}{\sum\sum}\mu^2(n_1)\mu^2(n_2).
\end{equation}
As in the classical dispersion method, it is important to obtain an asymptotic formula for each of the terms in \eqref{developV}, for $S[m](X,q)$ in particular. We point out that since the order of magnitude of $\mathcal{M}_2[m](X,q)$ is much smaller than some of the terms in the sum \eqref{developV}, our asymptotic formula for $S[m](X,q)$ must be precise enough to produce huge cancellations between the main terms. This precision is contained in
 
\begin{thm}\label{intermediate}
Let $X>2$ be a real number and let $m$ be a nonzero integer of arbitrary sign. Then for every $q$ integer, satisfying $q\leq X$, we have

\begin{multline}\label{intereq}
S[m](X,q)=\dfrac{\varphi(q)}{2}\left(\dfrac{C(q)X}{q}\right)^2+\frac{C}{2}\Gamma_{\operatorname{an}}(m)\Gamma_{\operatorname{ar}}(m)\prod_{p\mid q}\bigg(1+2p^{-1}\bigg)^{-1}X^{1/2}q^{1/2}\\
+O_{m}\left(d(q)X^{1/3}q^{2/3}+X^{23/15}q^{-13/15}(\log X)^{15}\right),
\end{multline}
where $C$, $C(q)$ are as in \eqref{C-} and \eqref{cq} respectively, $\Gamma_{\operatorname{an}}(m)$ and $\Gamma_{\operatorname{ar}}(m)$ are as in \eqref{Gamma-an} and \eqref{Gamma-ar} respectively; and the implied $O_{\epsilon,m}$-constant depends at most on $\epsilon$ and $m$.
\end{thm}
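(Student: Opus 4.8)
The plan is to prove \eqref{intereq} by opening both squarefree indicators, reducing $S[m](X,q)$ to a lattice-point count in arithmetic progressions, and then separating a huge but completely explicit main term from a genuine secondary term of size $X^{1/2}q^{1/2}$.

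First I would write $\mu^2(n_i)=\sum_{d_i^2\mid n_i}\mu(d_i)$ for $i=1,2$ and set $n_i=d_i^2e_i$ with $(d_i,q)=1$. This turns $S[m](X,q)$ into $\sum_{d_1,d_2}\mu(d_1)\mu(d_2)N(d_1,d_2)$, where $N(d_1,d_2)$ counts the pairs $(e_1,e_2)$ with $e_i\le X/d_i^2$, $(e_1e_2,q)=1$, subject to $md_1^2e_1\equiv d_2^2e_2\pmod q$. Solving for $e_2$ puts it in a single residue class mod $q$ (automatically coprime to $q$, since $md_1^2e_1$ is), so $N(d_1,d_2)$ is a sum over $e_1$ of the number of $e_2$ in an interval lying in that class. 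Replacing that inner count by its expected value $X/(d_2^2q)$, summing over $e_1$ and then over $d_1,d_2$, and using $\sum_{(d,q)=1}\mu(d)d^{-2}=C(q)$ (cf.\ \eqref{cq}), produces the first term on the right of \eqref{intereq}. The essential point I would stress here is that this main term has size $\asymp X^2/q$, which dwarfs the claimed secondary term; it must therefore be computed with an explicit error so that, upon insertion into \eqref{developV}, it cancels against the two remaining terms there and leaves only the $X^{1/2}q^{1/2}$ contribution.

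Second, for the fluctuation I would expand the error in the count of $e_2$ in its progression by the sawtooth Fourier series $\psi(t)=-\sum_{k\ne0}e(kt)/(2\pi i k)$ (the tail $d_i>\sqrt X$ being negligible), so that the phases split into a smooth part $e(kX/(d_2^2q))$ and an arithmetic part carrying the inverse squares $\overline{d_i}^{\,2}$ obtained by inverting $d_i^2$ modulo $q$. Executing the sum over $e_1$ (after expanding the condition $(e_1,q)=1$ by Möbius and completing in $e_1$) and then completing the sums over $d_1$ and $d_2$ modulo $q$—in which these variables now occur both linearly, through the dual frequency, and through the inverse squares $\overline{d_i}^{\,2}$—produces precisely the exponential sums $K_2(a,b;q)=\sum_{(x,q)=1}e((ax+b\overline{x}^2)/q)$ advertised in the introduction. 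The terms then split according to whether the dual frequencies satisfy an honest linear relation of the shape $k_1d_2^2=-mk_2d_1^2$ (the ``diagonal'') or not.

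Third, the diagonal gives the secondary main term: summing it over $d_1,d_2,k_1,k_2$ against the weights $\mu(d_1)\mu(d_2)/(k_1k_2)$ amounts to evaluating a multiple Dirichlet series, whose value assembles into $\zeta(\tfrac32)\pi^{-1}\prod_p(1-p^{-1})^2(1+2p^{-1})$, i.e.\ the constant $C$ of \eqref{C-} (note $(p^3-3p+2)/p^3=(1-p^{-1})^2(1+2p^{-1})$), which is exactly Hall's short-interval variance constant—the coincidence being forced by the parallel structure of the two problems. The archimedean integral over the two ranges $e_i\le X/d_i^2$, tied together by the homothety $n_2\approx mn_1$, yields the analytic factor $\Gamma_{\operatorname{an}}(m)$ of \eqref{Gamma-an} (the boundary overlap explaining the $\sqrt{m}$, $\sqrt{m\pm1}$ combinations and the sign dichotomy by $\operatorname{sgn}(m)$), while the $p$-adic solution densities at primes dividing $m$ and $q$ give the arithmetic factor $\Gamma_{\operatorname{ar}}(m)$ of \eqref{Gamma-ar} together with $\prod_{p\mid q}(1+2p^{-1})^{-1}$; I would carry out this local computation only for $m$ squarefree, where the Euler factors take the displayed closed form. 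The \emph{main obstacle} is the last step: the off-diagonal frequencies reduce to averages of $K_2(a,b;q)$, and the individual Weil-type bound $K_2(a,b;q)\ll q^{1/2+\epsilon}$ does not by itself reach the range $q\ge X^{31/41+\epsilon}$ in which the secondary term dominates. Here the plan is to gain extra cancellation on average by applying Heath-Brown's square sieve \cite{Heathbrown} to these $K_2$-averages, producing the error $X^{23/15}q^{-13/15}(\log X)^{15}$, while the elementary boundary and completion losses (carrying divisor-type factors $(\,\cdot\,,q)$) contribute $d(q)X^{1/3}q^{2/3}$. Extracting enough saving from the square sieve, while simultaneously pinning the enormous diagonal main term precisely enough for the cancellation in \eqref{developV} to go through, is the crux of the argument.
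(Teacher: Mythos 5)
Your overall architecture diverges from the paper's at the crucial step, and as written it has a gap that the paper's range-splitting is specifically designed to avoid. After opening $\mu^2(n_i)=\sum_{d_i^2\mid n_i}\mu(d_i)$ you propose to treat \emph{all} moduli $d_1,d_2\leq\sqrt{X}$ by the Fourier expansion of $\psi$ and completion of sums modulo $q$. But when $d_i$ is close to $\sqrt{X}$ the interval for $e_i$ has length $X/d_i^2\asymp 1$, far shorter than $q$; the count of $e_i$ in a fixed residue class modulo $q$ is then almost always $0$ or $1$, the ``expected value'' $X/(d_i^2q)$ is negligible, and completing the sum modulo $q$ costs a factor that swamps the target secondary term $X^{1/2}q^{1/2}$. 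There is no ``negligible tail $d_i>\sqrt{X}$'' to discard --- the constraint $d_i^2\mid n_i\leq X$ already forces $d_i\leq\sqrt X$ --- and the genuinely hard range is $y<d_i\leq\sqrt X$ for a suitable cut $y$ (the paper takes $y=X^{8/15}q^{2/15}$). The paper handles $d\leq y$ by an elementary density computation (Lemma \ref{lem<y}) and extracts the secondary term not from a diagonal of completed exponential sums but from the real-variable sawtooth integral $\int_0^Y\psi(v)v^{-s/2}\,dv$ and the functional equation at $s=-1/2$ (Lemmas \ref{lem1} and \ref{lem3}, following Croft); the sums $K_2(a,b;q)$ never appear in the proof at all, only in the introduction as motivation for the fixed-residue problem.

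The second, more serious issue is your final step: ``applying Heath-Brown's square sieve to these $K_2$-averages'' is not a well-defined operation. The square sieve (Theorem \ref{H-B}) is a device for bounding the number of perfect squares in a multiset with multiplicities $w(n)$; it does not produce cancellation in averages of complete exponential sums. In the paper it enters only for the range $d>y$, where $d=jk$ with $j^2\mid n$ and $k^2\mid mn+\ell q$ reduces the problem to counting solutions of $m_1j^2u=m_2k^2v-\ell q$, i.e.\ to detecting squares $(m_1j)^2$ among the integers $m_1(m_2k^2v-\ell q)/u$; the character sums that then arise are the sums $S_1,S_2$ of Lemma \ref{expsums} with Jacobi symbols to the auxiliary moduli $p_1p_2$, not Kloosterman-type sums modulo $q$. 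So to make your plan work you would need (i) to introduce the cut at $d\leq y$ and a separate counting argument for $d>y$ (at which point the square sieve applies in the form the paper uses it), and (ii) either to justify your completed-sum treatment only in the range $d\leq y$ --- where in fact the elementary density argument already suffices and no $K_2$ analysis is needed --- or to supply a genuinely new bound for averages of $K_2$. As it stands, the off-diagonal estimate, which you correctly identify as the crux, is not established by the tools you invoke.
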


Some parts of the proof of Theorem \ref{intermediate} were inspired by the paper of Heath-Brown \cite{Heathbrown} where he proved (see \cite[Theorem 2]{Heathbrown}) that for every $X>2$, one has

\begin{equation}\label{HB}
\displaystyle\sum_{n<X}\mu^2(n)\mu^2(n+1)=C_2X+O(X^{7/11}(\log X)^7),
\end{equation}
where $C_2$ is given by

\begin{equation}\label{C_2}
C_2=\prod_{p}\bigg(1-2p^{-2}\bigg)=0.322\:634\:098\ldots
\end{equation}
For an improve on formula \eqref{HB}, see Reuss \cite{Reuss}. The proof of \eqref{intereq} will cover the main part of this article and the Theorem \ref{main[m]} will follow.


\section*{Acknowledgements}
I am most thankful to my advisor \'Etienne Fouvry for his helpful comments on this work. I would also like to express my gratitude to Sary Drappeau and Djordjo Milovic for reading and making valuable remarks; and to Zeev Rudnick for a stimulating conversation about Theorem \ref{main}.

\section{Notation}
For $X>1$ we write $\mathcal{L}:=\log 2X$\\
Let $\omega$ be the well-known function defined by
\begin{equation}
\omega(n)=\displaystyle\sum_{p\mid n}1,
\end{equation}
for every integer $n\geq 1$.
For $N>0$, we use the notation $n\sim N$ meaning $N<n\leq 2N$.\\
Let $\psi(v)$ be the sawtooth function defined, as in Titchmarsh \cite{Titchmarsh}, by
\begin{equation}\label{psi}
\lfloor v\rfloor=v-\frac{1}{2}+\psi(v).
\end{equation}
For $n\in \Z, n\neq 0$ we define
\begin{equation}\label{sigma}
\sigma(n)=\prod_{p^2\mid n}p.
\end{equation}

\section{Initial Steps}

We are considering
\begin{equation*}
S[m](X,q)=\underset{\substack{n_1,n_2\leq X\\ (n_1n_2,q)=1\\mn_1=n_2\!\!\!\pmod q}}{\sum\sum}\mu^2(n_1)\mu^2(n_2).
\end{equation*}

We write $\ell q=n_2-mn_1$ and write $S[m](X,q)$ as a sum over $\ell$. We have

\begin{equation}\label{sum-l}
S[m](X,q)=\sum_{\ell\in \Z}\sum_{\substack{n\in I(\ell)\\(n,q)=1}}\mu^2(n)\mu^2(mn+\ell q),
\end{equation}
where $I(\ell)=I(\ell;m,q)$ is the interval defined by

\begin{equation}
I(\ell)=
\begin{cases}
(0,X)\cap\left(\frac{-\ell q }{m},\frac{X-\ell q}{m}\right)\text{ if }m>0\\  
(0,X)\cap\left(\frac{X-\ell q}{m},\frac{-\ell q}{m}\right)\text{ if }m<0.
\end{cases}
\end{equation}
That is,
$$
I(\ell)=\{n; n\text{ and }mn+\ell q\in (0,X)\}.
$$

\begin{rmk}
Observe that since $\mu^2(n)\mu^2(mn+h)=1$ if and only if $\sigma(n)\sigma(mn+h)=1$ (recall definition \eqref{sigma} of $\sigma(n)$), we have
\begin{equation}\label{divisors-xi}
\mu^2(n)\mu^2(mn+h)=\sum_{d\mid \sigma(n)\sigma(mn+h)}\mu(d).
\end{equation}

\end{rmk}
We now replace formula \eqref{divisors-xi} with $h=\ell q$ in equation \eqref{sum-l} and invert order of summation. We thus obtain

\begin{equation}\label{S=muNd}
S[m](X,q)=\sum_{\ell \in \Z}\sum_{\substack{1\leq d\leq X\\(d,q)=1}}\mu(d)N_d(\ell),
\end{equation}
where
\begin{equation}\label{Nd}
N_d(\ell):=\#\{n\in I(\ell); (n,q)=1\text{ and } \sigma(n)\sigma(mn+\ell q)\equiv 0\!\!\!\pmod d\}.
\end{equation}

Let $0<y<X$ be a parameter to be chosen later depending on $X$ and $q$. We break down the sum \eqref{S=muNd} as follows
\begin{equation}\label{S'+S''}
 S[m](X,q)=S_{\leq y}[m](X,q)+S_{>y}[m](X,q),
\end{equation}
where,
\begin{equation}\label{S1,S2=}
\begin{cases}
 S_{\leq y}[m](X,q)=\displaystyle\sum_{\ell \in \Z}\displaystyle\sum_{\substack{d\leq y\\(d,q)=1}}\mu(d)N_d(\ell),\\
 S_{>y}[m](X,q)=\displaystyle\sum_{\ell \in \Z}\displaystyle\sum_{\substack{y< d\leq X\\(d,q)=1}}\mu(d)N_d(\ell).
\end{cases}
\end{equation}


From this point on, we make the hypothesis
$$
m\text{ is squarefree.}
$$
For the general proof one would need to distinguish two different cases in the definition of the function $\kappa$ (see \eqref{kappa} below). In the next lemma we study the first of the sums on the right-hand side of \eqref{S'+S''}.

\begin{lem}\label{lem<y}
Let $m\neq 0$ be a squarefree number and $y>1$. Let $S_{\leq y}[m](X,q)$ be defined by \eqref{S1,S2=}. Then for every $y>1$, we have
\begin{equation}\label{eq-lem<y}
S_{\leq y}[m](X,q)=\sum_{\ell\in \Z}f_q(\ell,m)\left|I(\ell)\right|+O_m\left(\dfrac{X}{q}\left(\tau(q)y+ Xy^{-1}\right)\log y\right),
\end{equation}
uniformly for $X>1$ and $q\geq 1$ satisfying $(m,q)=1$, where
\begin{equation}\label{fq}
f_q(\ell,m)=\prod_{p}\left(1-\frac{2}{p^2}\right)\prod_{p\mid m}\left(\frac{p^2-1}{p^2-2}\right)\prod_{p\mid q}\left(\frac{p^2-p}{p^2-2}\right)\kappa((\ell,m^2))\prod_{\substack{p^2\mid \ell\\p\nmid mq}}\left(\frac{p^2-1}{p^2-2}\right),
\end{equation}
$\kappa$ is the multiplicative function such that
\begin{align}\label{kappa}
\kappa(p^{\alpha})=
\begin{cases}
 \dfrac{p^2-p-1}{p^2-1},& \text{ if } \alpha=1,\\
 \:\:\:\:\dfrac{p^2-p}{p^2-1},& \text{ if } \alpha=2,\\
 \:\:\:\:\:\:\:\:\:\:0,& \text{ if } \alpha\geq 3,
\end{cases}
\end{align}
and the implied constant depends at most on $m$.
\end{lem}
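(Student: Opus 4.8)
The plan is to estimate each $N_d(\ell)$ by turning the divisibility condition defining it into a union of congruences, then to sum against $\mu(d)$ over $d\le y$ and complete the resulting Dirichlet series in $d$. Since $d$ is squarefree and $(d,q)=1$, the condition $\sigma(n)\sigma(mn+\ell q)\equiv 0\pmod d$ is equivalent to requiring, for every prime $p\mid d$, that $p^2\mid n$ or $p^2\mid mn+\ell q$ (using $p\mid\sigma(k)\iff p^2\mid k$), and these conditions are independent of $(n,q)=1$ by coprimality. Writing $\rho_p(\ell)$ for the number of residues $r\bmod p^2$ with $p^2\mid r$ or $p^2\mid mr+\ell q$, the Chinese Remainder Theorem shows the admissible $n$ fill $\prod_{p\mid d}\rho_p(\ell)$ classes modulo $d^2$. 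First I would count integers of $I(\ell)$ in a fixed class modulo $d^2$ that are coprime to $q$; a Möbius expansion of the coprimality condition gives $\frac{\varphi(q)}{q}\frac{|I(\ell)|}{d^2}+O(\tau(q))$ per class, whence
\begin{equation*}
N_d(\ell)=\frac{\varphi(q)}{q}\,\frac{|I(\ell)|}{d^2}\prod_{p\mid d}\rho_p(\ell)+O_m\!\Big(\tau(q)\,2^{\omega(d)}\Big),
\end{equation*}
where I use that $\rho_p(\ell)\le 2$ for $p\nmid m$ while the finitely many primes $p\mid m$ contribute only a factor $O_m(1)$, so $\prod_{p\mid d}\rho_p(\ell)\ll_m 2^{\omega(d)}$.

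Summing $\mu(d)N_d(\ell)$ over squarefree $d\le y$ with $(d,q)=1$, and recalling that $I(\ell)$ is empty outside a range of $O_m(X/q)$ values of $\ell$, the contribution of the error term is $\ll_m \tau(q)\sum_\ell\sum_{d\le y}2^{\omega(d)}\ll_m \frac{X}{q}\tau(q)\,y\log y$, by the standard bound $\sum_{d\le y}2^{\omega(d)}\ll y\log y$; this produces the first error term. The main term becomes $\sum_\ell\frac{\varphi(q)}{q}|I(\ell)|\sum_{d\le y,\,(d,q)=1}\frac{\mu(d)}{d^2}\prod_{p\mid d}\rho_p(\ell)$. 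Since $d\mapsto\prod_{p\mid d}\rho_p(\ell)$ is multiplicative, completing the sum over $d$ to all of $\Z_{>0}$ yields the Euler product $\frac{\varphi(q)}{q}\prod_{p\nmid q}\big(1-\rho_p(\ell)/p^2\big)$, while the tail estimate $\sum_{d>y}2^{\omega(d)}/d^2\ll(\log y)/y$, combined with $\sum_\ell|I(\ell)|\ll_m X^2/q$, contributes the second error term $\frac{X}{q}Xy^{-1}\log y$. The bound thereby holds for every $y>1$, the eventual balancing of $S_{\le y}$ against $S_{>y}$ being carried out elsewhere. It then remains only to identify $\frac{\varphi(q)}{q}\prod_{p\nmid q}\big(1-\rho_p(\ell)/p^2\big)$ with the closed form $f_q(\ell,m)$ of \eqref{fq}.

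This identification, carried out prime by prime, is where the real work lies. For $p\nmid mq$ one finds $\rho_p(\ell)=2$ unless $p^2\mid\ell$, when $\rho_p(\ell)=1$; this reproduces the base factor $\prod_p(1-2/p^2)$, the factor $\prod_{p\mid q}\frac{p^2-p}{p^2-2}$ arising from $\varphi(q)/q$ together with the omission of $p\mid q$, and the last product in \eqref{fq}. For $p\mid m$ the hypothesis that $m$ is squarefree is essential, since it forces $p\,\|\,m$; a short computation in the three cases $p\nmid\ell$, $p\,\|\,\ell$, $p^2\mid\ell$ gives $\rho_p(\ell)=1,\ p+1,\ p$ respectively, so that $1-\rho_p(\ell)/p^2=\frac{p^2-1}{p^2}\,\kappa\big(p^{\min(v_p(\ell),2)}\big)$, which exactly reconstitutes the factors $\prod_{p\mid m}\frac{p^2-1}{p^2-2}$ and $\kappa((\ell,m^2))$ via \eqref{kappa} and the multiplicativity of $\kappa$. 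I expect the main obstacle to be precisely this local analysis at the primes dividing $m$: one must track $v_p(\ell)$ and the overlap of the two congruences $p^2\mid n$ and $p^2\mid mn+\ell q$, and it is here that a non-squarefree $m$ would force the extra cases in $\kappa$ mentioned after \eqref{kappa}. The error bookkeeping, by contrast, is routine once the uniformity in $q$ of the per-class error $O(\tau(q))$ is secured.
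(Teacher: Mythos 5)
Your argument is correct and follows essentially the same route as the paper: the local densities $\rho_p(\ell)$ are exactly the paper's $u_p(\ell)$, the CRT evaluation of $N_d(\ell)$ with per-class error $O(\tau(q))$, the completion of the M\"obius sum with tail $\ll y^{-1}\log y$, and the five-case computation of the local factor (including the values $1$, $p+1$, $p$ at primes $p\mid m$) all match the paper's proof. The only difference is cosmetic: you carry out the identification of $\frac{\varphi(q)}{q}\prod_{p\nmid q}(1-\rho_p(\ell)/p^2)$ with $f_q(\ell,m)$ in slightly more explicit detail than the paper, which simply tabulates $u_p(\ell)$ and asserts the conclusion.
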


\begin{proof}

We want to evaluate $S_{\leq y}[m](X,q)$. Let $p\nmid q$, we define
\begin{equation}
 u_p(\ell)=\#\{v\!\!\!\pmod{p^2}; v\equiv0\!\!\!\pmod{p^2}\text{ or }mv+\ell q\equiv0\!\!\!\pmod{p^2}\}.
\end{equation}
Again, we omit the dependence in $m$ and $q$ since both are held fixed and no confusion should arise from this.
By the Chinese Remainder Theorem, we have, for every $d$ squarefree, $(d,q)=1$.
\begin{equation}\label{Nd=}
N_d(\ell)=\frac{\varphi(q)}{q}U_d(\ell)\dfrac{\left|I(\ell)\right|}{d^2}+O(d(q)U_d(\ell)),
\end{equation}
where
$$
U_d(\ell):=\prod_{p\mid d}u_p(\ell).
$$
We notice further that if $(p,m)=1$, $u_p\leq 2$. Hence we have the upper bound
\begin{equation}\label{Ud<}
 U_d(\ell)\ll_m 2^{\omega(d)}.
\end{equation}
We use formula \eqref{Nd=} and the upper bound \eqref{Ud<}. So, we deduce
\begin{align}\label{before-sum-l}
\sum_{\substack{d\leq y\\(d,q)=1}}\mu(d)N_d(\ell)&=\sum_{\substack{d\leq y\\(d,q)=1}}\mu(d)\frac{\varphi(q)}{q}U_d(\ell)\dfrac{\left|I(\ell)\right|}{d^2}+O_m\left(d(q)y\log y\right)\notag\\
&=\frac{\varphi(q)}{q}\prod_{p\nmid q}\left(1-\frac{u_p(\ell)}{p^2}\right)\left|I(\ell)\right|+O_m\left(\tau(q)y\log y+Xy^{-1}\log y\right),
\end{align}
where in the second line we used that $\left|I(\ell)\right|\leq X$ and the convergence of the appearing infinite product. We proceed by realizing that if $|\ell|>\dfrac{(|m|+1)X}{q}$, the set $I(\ell)$ is empty and hence $N_d(\ell)=0$ for every $d$. This observation and equation \eqref{before-sum-l} combined give us

\begin{align}\label{lem-final}
 S_{\leq y}[m](X,q)&=\sum_{|\ell|\leq \frac{(|m|+1)X}{q}}\sum_{\substack{d\leq y\\(d,q)=1}}\mu(d)N_d(\ell)\notag\\
 &=\frac{\varphi(q)}{q}\sum_{|\ell|\leq \frac{(|m|+1)X}{q}}\prod_{p\nmid q}\left(1-\frac{u_p(\ell)}{p^2}\right)\left|I(\ell)\right|+O_m\left(\dfrac{X}{q}\left(\tau(q)y+Xy^{-1}\right)\log y\right)\notag\\
 &=\frac{\varphi(q)}{q}\sum_{\ell \in \Z}\prod_{p\nmid q}\left(1-\frac{u_p(\ell)}{p^2}\right)\left|I(\ell)\right|+O_m\left(\dfrac{X}{q}\left(\tau(q)y+Xy^{-1}\right)\log y\right).
\end{align}
We finish by a study of $u_p(\ell)$ for every $p\nmid q$. We distinguish five different cases.

\begin{itemize}
 \item If $p\mid m$, $p^2\mid \ell$ then 
 $$
 u_p(\ell)=p,
 $$
 \item If $p\mid m$, $p\mid \ell$ but $p^2\nmid \ell$ then 
 $$
 u_p(\ell)=p+1,
 $$
 \item If $p\mid m$, $p\nmid \ell$ then 
 $$
 u_p(\ell)=1,
 $$
 \item If $p\nmid m$, $p^2\mid \ell$ then 
 $$
 u_p(\ell)=1,
 $$
 \item If $p\nmid m$, $p^2\nmid \ell$ then 
 $$
 u_p(\ell)=2.
 $$ 
\end{itemize}

The lemma is now a consequence of \eqref{lem-final} and the different values of $u_p(\ell)$.
\end{proof}

Now, we can go back to the formula \eqref{S'+S''} and use \eqref{eq-lem<y} for $S_{\leq y}[m](X,q)$. What we obtain is the following

\begin{equation}\label{S'}
S[m](X,q)=\mathcal{A}[m](X,q)+S_{> y}[m](X,q)+O_m\left(\dfrac{X}{q}\left(\tau(q)y+ Xy^{-1}\right)\log y\right),
\end{equation}

where
\begin{equation}\label{A[m]}
\mathcal{A}[m](X,q)=\sum_{\ell\in \Z}f_q(\ell,m)|I(\ell)|.
\end{equation}

We finish this section by introducing the following multiplicative function for which we will deduce some properties in the next section. We let

\begin{equation}\label{h(d)}
h(d):=\mu^2(d)\prod_{p\mid d}\left(1-2p^{-2}\right)^{-1}.
\end{equation}
We point out further that we have the following equality
\begin{equation}\label{gq}
\sum_{\substack{d^2\mid \ell\\(d,r)=1}}\frac{h(d)}{d^2}=\prod_{\substack{p^2\mid \ell\\p\nmid r}}\left(\frac{p^2-1}{p^2-2}\right),
\end{equation}
for every $\ell,r\in \Z$, both nonzero.

\section{Preparation results}

In the next section we evaluate the sum $\mathcal{A}[m](X,q)$. But first we prove some preliminary results that shall be useful.

We start with a lemma that is a simplified version of \cite[Lemma 1]{Croft}.
\begin{lem}\label{lem1}
For $X>1$, $0<s<2$, and $\psi(v)$ defined by \eqref{psi},

$$\int_{0}^{X}\psi(v)v^{-s/2}dv=\dfrac{\zeta(\frac{s}{2}-1)}{(\frac{s}{2}-1)}+O\left(X^{-\frac{s}{2}}\right),$$
where the $O$-constant is absolute.
\end{lem}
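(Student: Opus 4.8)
The plan is to evaluate the integral
$$\int_0^X \psi(v)v^{-s/2}\,dv$$
by exploiting the Fourier expansion of the sawtooth function. Recall that $\psi(v)=\{v\}-\tfrac12$ has the absolutely summable Fourier series
$$\psi(v)=-\sum_{n=1}^{\infty}\frac{\sin(2\pi n v)}{\pi n},$$
but rather than integrating term-by-term it is cleaner to relate the integral directly to the Riemann zeta function. First I would recall the classical identity expressing $\zeta(w)$ through $\psi$: for $0<\operatorname{Re}(w)<1$ one has
$$\zeta(w)=-w\int_1^{\infty}\psi(v)v^{-w-1}\,dv+\frac{w}{w-1},$$
obtained by partial summation (Abel summation) from $\zeta(w)=\sum n^{-w}$ together with \eqref{psi}. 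The exponent matching is the key: I want $v^{-s/2}$ in the integrand, so I set $-w-1=-s/2$, i.e.\ $w=\tfrac{s}{2}-1$. Since $0<s<2$ gives $-1<w<0$, the hypothesis $0<\operatorname{Re}(w)<1$ of the usual identity is \emph{not} met; instead $w$ lies in the strip $-1<w<0$, where $\zeta(w)$ is defined by analytic continuation and where a companion integral representation still holds.

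Concretely, I would start from the finite sum and Euler--Maclaurin at the first order. Writing $\sum_{n\le X}n^{-s/2}$ and using $\lfloor v\rfloor = v-\tfrac12+\psi(v)$, one converts the difference between the sum and its integral approximation into $\int_1^X\psi(v)\,d(v^{-s/2})$ plus boundary terms; letting $X\to\infty$ identifies the tail integral $\int_1^{\infty}\psi(v)v^{-s/2-1}\,dv$ with a multiple of $\zeta(\tfrac{s}{2}-1)$ (valid by analytic continuation throughout $-1<\tfrac s2-1<0$). The target integral in the lemma, however, has integrand $v^{-s/2}$, not $v^{-s/2-1}$, and it runs from $0$ rather than $1$. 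I therefore handle the two issues as follows: the piece $\int_0^1\psi(v)v^{-s/2}\,dv$ is a bounded absolute constant (it converges since $s/2<1$) and can be folded into the main constant term, while the matching between $\int_1^{\infty}\psi(v)v^{-s/2}\,dv$ and $\zeta(\tfrac{s}{2}-1)/(\tfrac{s}{2}-1)$ comes from the analytically continued zeta representation after adjusting the exponent.

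The cleanest route, which I would actually write out, is to compute
$$\int_0^X\psi(v)v^{-s/2}\,dv=\int_0^{\infty}\psi(v)v^{-s/2}\,dv-\int_X^{\infty}\psi(v)v^{-s/2}\,dv.$$
The tail integral $\int_X^{\infty}\psi(v)v^{-s/2}\,dv$ is $O(X^{-s/2})$: integrating by parts once, using that the antiderivative $\Psi(v)=\int_0^v\psi$ is bounded (since $\psi$ has mean zero over each period, $\Psi$ is periodic and $O(1)$), converts the tail into boundary terms of size $X^{-s/2}$ plus a convergent integral of the same order; this is exactly the claimed error term, with an absolute constant. The full integral $\int_0^{\infty}\psi(v)v^{-s/2}\,dv$ is then evaluated as the constant, and it must equal $\zeta(\tfrac s2-1)/(\tfrac s2-1)$; this I would verify by the same Abel-summation/analytic-continuation identity for $\zeta$ at the shifted argument.

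I expect the main obstacle to be the justification of convergence and of the analytic continuation, rather than any single computation: the integral $\int_0^{\infty}\psi(v)v^{-s/2}\,dv$ converges at infinity only conditionally (the decay $v^{-s/2}$ with $s/2<1$ is not enough for absolute convergence against the oscillation of $\psi$), so the integration-by-parts with the bounded antiderivative $\Psi$ must be done carefully to make the tail estimate rigorous and to pin down that the boundary term at infinity vanishes. Keeping track of the exact constant $\zeta(\tfrac s2-1)/(\tfrac s2-1)$ through the analytic continuation—confirming that the regularized value of the (non-absolutely-convergent) integral is precisely this quotient and not off by the $\int_0^1$ contribution—is the delicate bookkeeping step; once the integration by parts is set up with $\Psi$ bounded, the error $O(X^{-s/2})$ with an absolute implied constant falls out immediately.
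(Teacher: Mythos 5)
Your proposal is correct and follows essentially the same route as the paper: the paper simply quotes Titchmarsh's identity $\zeta(\tfrac{s}{2}-1)=(\tfrac{s}{2}-1)\int_0^\infty\psi(v)v^{-s/2}\,dv$ (valid for $0<s<2$) and then bounds the tail $\int_X^\infty\psi(v)v^{-s/2}\,dv$ by a single integration by parts against the bounded periodic primitive of $\psi$, exactly as you describe. The only difference is cosmetic: where you sketch a derivation of the zeta identity by Abel summation and analytic continuation, the paper just cites \cite[equation (2.1.6)]{Titchmarsh}.
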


\begin{proof}
We start with the formula (see \cite[equation (2.1.6)]{Titchmarsh}),

$$\zeta\left(\frac{s}{2}-1\right)=\left(\frac{s}{2}-1\right)\int_{0}^{\infty}\psi(v)v^{-s/2}dv\text{,  }(0<\sigma<2).$$
We estimate the tail of the integral

\begin{equation}\label{I=}
I=\int_{X}^{\infty}\psi(v)v^{-s/2}dv.
\end{equation}
Let $\Psi_1(v)$ be a primitive integral of $\psi(v)$ which satisfies $\Psi_1(0)=0$. So, integration by parts gives us

$$I=\Psi_1(X)X^{-\frac{s}{2}}+\dfrac{s}{2}\int_{X}^{\infty}\psi_1(v)v^{-\frac{s}{2}-1}dv.$$
Notice that since $\int_0^1\psi(v)dv=0$, $\Psi_1$ is periodic and thus bounded. Hence, we have

$$
\left|I\right|\leq 2X^{-\frac{s}{2}}
$$
which concludes the proof.
\end{proof}

The next lemma writes the function $h(d)$ as a convolution between the identity and a function which decays rapidly.

\begin{lem}\label{conv}
Let $h(d)$ be as in \eqref{h(d)}. Then we have
$$
h(d)=\sum_{d_1d_2=d}\beta(d_1),
$$
where $\beta(t)$ is supported on the cubefree numbers. Furthermore, for every cubefree number $t$, if we write $t=ab^2$ with $a,b$ squarefree, $(a,b)=1$, we have
$$
\beta(t)\ll \dfrac{d(a)}{a^2}.
$$
\end{lem}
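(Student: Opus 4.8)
The plan is to obtain $\beta$ by Möbius inversion and then read off both its support and its size from its values at prime powers. The defining relation $h = \beta * \mathbf{1}$ (where $*$ denotes Dirichlet convolution and $\mathbf{1}$ is the constant function) is equivalent, after convolving with $\mu$, to $\beta = h * \mu$, that is $\beta(t) = \sum_{d \mid t} h(d)\mu(t/d)$. Since $h$ is multiplicative by \eqref{h(d)} and $\mu$ is multiplicative, $\beta$ is multiplicative, so it suffices to compute $\beta(p^{\alpha})$ for each prime power.

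First I would record that $h$ is supported on the squarefree integers, with $h(p) = (1-2p^{-2})^{-1} = p^2/(p^2-2)$ and $h(p^{\alpha}) = 0$ for $\alpha \geq 2$. A direct evaluation of the convolution at prime powers then gives $\beta(p) = h(p) - 1 = 2/(p^2-2)$ and $\beta(p^2) = -h(p) = -p^2/(p^2-2)$, while for $\alpha \geq 3$ every term $h(p^j)\mu(p^{\alpha-j})$ vanishes: a nonzero $h(p^j)$ forces $j \leq 1$ and a nonzero $\mu(p^{\alpha-j})$ forces $\alpha - j \leq 1$, which are incompatible once $\alpha \geq 3$. Hence $\beta(p^{\alpha}) = 0$ for $\alpha \geq 3$, which is precisely the assertion that $\beta$ is supported on the cubefree integers.

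For the size estimate I would use the factorisation $t = ab^2$ with $a,b$ squarefree and $(a,b)=1$, so that by multiplicativity
\[
\beta(t) = \prod_{p\mid a}\frac{2}{p^2-2}\;\prod_{p\mid b}\left(-\frac{p^2}{p^2-2}\right).
\]
Pulling out the factor $2/p^2$ from each term of the first product rewrites it as $(d(a)/a^2)\prod_{p\mid a} p^2/(p^2-2)$, using $d(a) = 2^{\omega(a)}$ and $\prod_{p\mid a}p^2 = a^2$ for squarefree $a$. Both the resulting product over $p \mid a$ and the product over $p \mid b$ are then bounded above by the full Euler product $\prod_p p^2/(p^2-2)$, which converges because $\sum_p 2/(p^2-2)$ does. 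These contribute only an absolute constant, yielding $\beta(t) \ll d(a)/a^2$.

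There is no serious obstacle here: once $\beta$ is identified by inversion the lemma is essentially a bookkeeping exercise. The only point needing a little care is the uniform boundedness of the $b^2$-part of $\beta(t)$, but this is immediate from the convergence of $\prod_p p^2/(p^2-2)$, so the whole estimate reduces to the elementary identity $\prod_{p\mid a} 2/(p^2-2) = (d(a)/a^2)\prod_{p\mid a} p^2/(p^2-2)$.
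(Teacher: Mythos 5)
Your proposal is correct and follows essentially the same route as the paper: identify $\beta=h*\mu$, compute $\beta(p)=2/(p^2-2)$, $\beta(p^2)=-p^2/(p^2-2)$, $\beta(p^{\alpha})=0$ for $\alpha\geq 3$, and conclude the bound from the convergence of $\prod_p p^2/(p^2-2)$. You merely spell out the Möbius inversion and the factorisation $\prod_{p\mid a}2/(p^2-2)=(d(a)/a^2)\prod_{p\mid a}p^2/(p^2-2)$, which the paper leaves implicit.
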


\begin{proof}
It is easy to caculate the values of $\beta$ on prime powers
$$
\beta(p^{k})=
\begin{cases}
 1\text{, if }k=0,\\
 \frac{2}{p^2-2}\text{, if }k=1,\\
 \frac{-p^2}{p^2-2}\text{, if }k=2,\\
 0\text{, otherwise.}
\end{cases}
$$
The lemma now follows by noticing that the product
$$
\prod_{p}\left(\frac{p^2}{p^2-2}\right)
$$
is convergent.

\end{proof}
The lemma that comes next is based on \cite[Lemma 3]{Croft}. However, there are some confusing steps in the corresponding proof as it was also pointed out by Vaughan (see \cite[page 574]{Vaughan}). In particular, the value for the constant $B$ (see \cite[equation 6.3]{Croft}) is inacurate wrong. In a previous version of this paper we proceeded by a modification of Croft's arguments which led to basically the same result as stated in \cite[Lemma 3]{Croft}. More recently, inspired by a paper by Pillichshammer \cite{Pill}, we found a way of obtaining a slighlty better result by elementary means. The author later became aware of the paper \cite{Hall}, where it is implicitly contained a similar lemma with $Y^{1/3}$ replaced by $Y^{1/3+\epsilon}$ that uses again methods from complex analysis.

\begin{lem}\label{lem3}(Compare to \cite[Lemma 3]{Croft})
For $Y>0$ and $r$ an integer $\geq 1$, let

$$G(Y,r):=\sum_{(d,r)=1}h(d)\int_{0}^{Y/d^2}\psi(v)dv,$$
where $\psi(v)$ is defined by \eqref{psi} and $h(d)$ by \eqref{h(d)}.
We have, uniformly for $Y\geq 1$ and $r\neq 0$,

\begin{equation}\label{eq-aux-lem3}
\sum_{(d,r)=1}\int_{0}^{Y/d^2}\psi(v)dv=\dfrac{\varphi(r)}{r}\dfrac{\zeta(3/2)}{2\pi}Y^{1/2} + O\left(d(r)Y^{1/3}\right),
\end{equation}\\
and,
\begin{equation}\label{eq-lem3}
G(Y,r)=C'\prod_{p\mid r}\left(1+p(p^2-2)^{-1}\right)^{-1}Y^{1/2} + O\left(d(r)Y^{1/3}\right),
\end{equation}
where $C'$ is given by 

\begin{equation}\label{B-}
C'=\dfrac{\zeta\left(\frac{3}{2}\right)}{2\pi}\prod_{p}\bigg(\dfrac{p^3-3p+2}{p(p^2-2)}\bigg),
\end{equation}
and the $O$-constant is absolute.
\end{lem}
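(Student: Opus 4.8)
The plan is to establish the auxiliary identity \eqref{eq-aux-lem3} first and then bootstrap to \eqref{eq-lem3} through the convolution $h=\beta*\mathbf 1$ provided by Lemma \ref{conv}. Throughout, write $\Phi(u):=\int_0^u\psi(v)\,dv$, which is $1$-periodic and bounded, and denote by $T(Y,r)$ the left-hand side of \eqref{eq-aux-lem3}. The first move is to swap summation and integration: since $\int_0^{Y/d^2}\psi(v)\,dv=\int_0^\infty\psi(v)\,\mathbf 1[d<\sqrt{Y/v}]\,dv$, summing over $d$ gives
$$T(Y,r)=\int_0^Y\psi(v)\,D_r\!\big(\sqrt{Y/v}\big)\,dv,\qquad D_r(z):=\#\{d\le z:(d,r)=1\},$$
the integral being supported on $(0,Y)$ because $D_r(z)=0$ for $z<1$. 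I would then insert the elementary lattice estimate $D_r(z)=\tfrac{\varphi(r)}{r}z+\Delta_r(z)$, where $\Delta_r(z)=\sum_{e\mid r}\mu(e)\psi(z/e)$ up to a bounded constant, splitting the integral into a main and an error part.

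For the main part I pull out $\tfrac{\varphi(r)}{r}\sqrt Y$ and invoke Lemma \ref{lem1} with $s=1$, which yields $\int_0^Y\psi(v)v^{-1/2}\,dv=-2\zeta(-\tfrac12)+O(Y^{-1/2})$. The functional equation of $\zeta$ converts $-2\zeta(-\tfrac12)$ into $\tfrac{\zeta(3/2)}{2\pi}$, precisely the constant in \eqref{eq-aux-lem3}; the leftover $\tfrac{\varphi(r)}{r}\sqrt Y\cdot O(Y^{-1/2})=O(1)$ is harmless. For the error part, after expanding $\Delta_r$, everything reduces to bounding, uniformly in $e\mid r$,
$$\int_0^Y\psi(v)\,\psi\!\Big(\tfrac{\sqrt Y}{e}\,v^{-1/2}\Big)\,dv=O(Y^{1/3}),$$
and then summing the resulting $2^{\omega(r)}\le d(r)$ contributions. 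I expect this integral of a product of two sawtooth functions with incommensurate arguments to be the main obstacle: the factor $\psi(v)$ oscillates at unit frequency while the second factor is slowly varying on most of the range, so cancellation is available, but extracting the exponent $1/3$ requires an elementary exponential-sum estimate—expanding $\psi$ into its truncated Fourier series and optimising the cut-off, exactly in the spirit of the Dirichlet divisor problem and of the Pillichshammer-style argument referred to above.

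To pass from \eqref{eq-aux-lem3} to \eqref{eq-lem3}, I would use Lemma \ref{conv} to write $G(Y,r)=\sum_{(d_1,r)=1}\beta(d_1)\,T(Y/d_1^2,r)$ (the coprimality $(d_1d_2,r)=1$ factors as $(d_1,r)=(d_2,r)=1$), and substitute \eqref{eq-aux-lem3} into each inner sum. The main term becomes
$$\frac{\varphi(r)}{r}\,\frac{\zeta(3/2)}{2\pi}\,Y^{1/2}\sum_{(d_1,r)=1}\frac{\beta(d_1)}{d_1},$$
and a short Euler-product computation using $\beta(p)=\tfrac{2}{p^2-2}$, $\beta(p^2)=\tfrac{-p^2}{p^2-2}$ gives $\sum_{(d_1,r)=1}\beta(d_1)/d_1=\prod_{p\nmid r}\tfrac{p^3-3p+2}{p(p^2-2)}$. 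Combining this with $\varphi(r)/r$ and factoring $p^3-3p+2=(p-1)^2(p+2)$ reproduces the constant $C'$ of \eqref{B-} together with the local correction $\prod_{p\mid r}\big(1+p(p^2-2)^{-1}\big)^{-1}$. Finally, the decay bound $\beta(t)\ll d(a)/a^2$ for $t=ab^2$ cubefree (Lemma \ref{conv}) makes $\sum|\beta(d_1)|/d_1^{2/3}$ converge and keeps the error term at $O(d(r)Y^{1/3})$; the same bound controls the tail $d_1>\sqrt Y$, where one uses the trivial estimate $T(Y/d_1^2,r)=O(Y/d_1^2)$ to see that this range contributes only $O(Y^{1/4})=O(Y^{1/3})$, completing the deduction.
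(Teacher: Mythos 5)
Your deduction of \eqref{eq-lem3} from \eqref{eq-aux-lem3} --- writing $G(Y,r)=\sum_{(d_1,r)=1}\beta(d_1)T(Y/d_1^2,r)$, completing the main-term sum to the Euler product $\prod_{p\nmid r}\tfrac{p^3-3p+2}{p(p^2-2)}$, and controlling both tails via $\beta(ab^2)\ll d(a)/a^2$ --- is exactly the paper's argument, and your constant-matching (including the factorisation $p^3-3p+2=(p-1)^2(p+2)$ that produces $\prod_{p\mid r}(1+p(p^2-2)^{-1})^{-1}$) checks out.

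The problem is in your proof of \eqref{eq-aux-lem3}. After interchanging sum and integral over the \emph{whole} range and inserting $D_r(z)=\tfrac{\varphi(r)}{r}z+O(1)+\sum_{e\mid r}\mu(e)\psi(z/e)$, the entire content of the exponent $1/3$ is concentrated in the bound
$$\int_0^Y\psi(v)\,\psi\Big(\frac{\sqrt Y}{e}\,v^{-1/2}\Big)\,dv\ll Y^{1/3},$$
which you assert, correctly flag as ``the main obstacle'', and then do not prove. The bound is in fact true --- for instance, integrate by parts in $\psi(v)$ on each of the $O\big((\sqrt Y/e)^{2/3}\big)$ intervals between consecutive jumps of the second factor and treat the range $v\ll(\sqrt Y/e)^{2/3}$ trivially, or carry out the truncated-Fourier computation you gesture at --- but as written the hardest step of the lemma is missing. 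Moreover, this difficulty is self-inflicted: the paper never forms this correlation integral. It splits the $d$-sum at $D=Y^{1/3}$, bounds each term with $d\le D$ by $O(1)$ (the antiderivative $\Psi_1(v)=\int_0^v\psi(u)\,du$ is periodic, hence bounded), and interchanges sum and integral only for $d>D$, where the counting error $O(d(r))$ per value of $v$ is integrated over $v\le Y/D^2$ only, giving $O(d(r)Y/D^2)=O(d(r)Y^{1/3})$. That splitting delivers the same exponent with no cancellation needed between the two sawtooth factors; you should either adopt it or supply a complete proof of the displayed estimate.
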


\begin{proof}

We start by proving formula \eqref{eq-aux-lem3} which is simpler. Let $D=D(Y)>0$ to be chosen later. Since $\psi(v)$ is periodic, we have

\begin{align}\label{start-aux}
\sum_{(d,r)=1}\int_{0}^{Y/d^2}\psi(v)dv&=\sum_{\substack{d\leq D\\(d,r)=1}}\int_{0}^{Y/d^2}\psi(v)dv+\sum_{\substack{d> D\\(d,r)=1}}\int_{0}^{Y/d^2}\psi(v)dv\notag\\
&=\sum_{\substack{d> D\\(d,r)=1}}\int_{0}^{Y/d^2}\psi(v)dv+O(D).
\end{align}

By inverting order of the summation and the integration, we have

\begin{align*}
\sum_{\substack{d> D\\(d,r)=1}}\int_{0}^{Y/d^2}\psi(v)dv &=\int_0^{Y/D^2}\psi(v)\sum_{\substack{D<d\leq \sqrt{\frac{Y}{v}}\\(d,r)=1}}1dv\\
&=\int_0^{Y/D^2}\psi(v)\left(\dfrac{\varphi(r)}{r}\left((Y/v\right)^{1/2}-D)+O(d(r))\right)dv\\
&=\dfrac{\varphi(r)}{r}Y^{1/2}\int_0^{Y/D^2}\psi(v)v^{-1/2}dv+O(D)+O\left(d(r)\dfrac{Y}{D^2}\right).
\end{align*}

By the lemma \ref{lem1}, the equation above implies

\begin{equation}\label{last-aux}
\sum_{\substack{d> D\\(d,r)=1}}\int_{0}^{Y/d^2}\psi(v)dv=\dfrac{-2\varphi(r)}{r}\zeta\left(-\frac{1}{2}\right)Y^{1/2}+O(D)+O\left(d(r)\dfrac{Y}{D^2}\right).
\end{equation}

We make the choice $D=Y^{1/3}$. Equation \eqref{eq-aux-lem3} is now just a consequence of \eqref{start-aux}, \eqref{last-aux} and the functional equation for the Riemann zeta function, which for $s=-1/2$ gives

$$
\zeta\left(-1/2\right)=-\frac{\zeta\left(3/2\right)}{4\pi}.
$$

We will now deduce \eqref{eq-lem3} from \eqref{eq-aux-lem3}.\\
Since 

$$
h(d)=(1\ast \beta)(d),
$$

we can write
\begin{equation}\label{G-mn}
G(Y,r)=\sum_{(d_1,r)=1}\beta(d_1)\sum_{(d_2,r)=1}\int_0^{Y/d_1^2d_2^2}\psi(v)dv.
\end{equation}

We have two possibilities. If $d_1^2\leq Y$, we shall use formula \eqref{eq-aux-lem3} for the inner sum on the right-hand side of \eqref{G-mn}. If, otherwise, $d_1^2>Y$, we shall use the trivial bound

$$
\sum_{(d_2,r)=1}\int_0^{Y/d_1^2d_2^2}\psi(v)dv\leq \sum_{d_2}\frac{Y}{d_1^2d_2^2}\ll \frac{Y}{d_1^2}.
$$
By doing so in formula \eqref{G-mn}, we obtain

\begin{multline}
G(Y,r)=\sum_{\substack{d_1\leq \sqrt{Y}\\ (d_1,r)=1}}\beta(d_1)\dfrac{\varphi(r)}{r}\dfrac{\zeta(3/2)}{2\pi}\left(\frac{Y}{d_1^2}\right)^{1/2}\\
+ O\left(d(r)\sum_{\substack{d_1\leq \sqrt{Y}\\ (d_1,r)=1}}\beta(d_1)\left(\frac{Y}{d_1^2}\right)^{1/3}+\sum_{\substack{d_1> \sqrt{Y}\\ (d_1,r)=1}}\beta(d_1)\frac{Y}{d_1^2}\right).
\end{multline}
By completing the first term and using lemma \ref{conv}, we have

\begin{multline}\label{almost-lem3}
 G(Y,r)=C_\beta(r)\dfrac{\varphi(r)}{r}\dfrac{\zeta(3/2)}{2\pi}Y^{1/2}+O\left(d(r){Y}^{1/3}\underset{ab^2\leq \sqrt{Y}}{\sum\sum}\dfrac{d(a)}{a^{\frac{8}{3}}b^{\frac{4}{3}}}\right)\\
+ O\left({Y}^{1/2}\underset{ab^2> \sqrt{Y}}{\sum\sum}\dfrac{d(a)}{a^3b^2}+Y\underset{ab^2> \sqrt{Y}}{\sum\sum}\dfrac{d(a)}{a^4b^4}\right),
\end{multline}
where
$$
C_{\beta}(r)=\sum_{(d_1,r)=1}\dfrac{\beta(d_1)}{d_1}=\prod_{p}\left(1-\dfrac{p-2}{p(p^2-2)}\right)\prod_{p\mid r}\left(1-\dfrac{p-2}{p(p^2-2)}\right)^{-1}.
$$
The first error term on the right-hand side of \eqref{almost-lem3} is clearly $\ll d(r)Y^{1/3}$. For the second one, we have
\begin{align*}
\sum_{ab^2> \sqrt{Y}}\dfrac{d(a)}{a^3b^2}&\leq \sum_{ab> Y^{1/4}}\dfrac{d(a)}{a^2b^2}\\
&\ll Y^{-1/4}(\log Y)^2,
\end{align*}
and analogously,
$$
\sum_{ab^2> \sqrt{Y}}\dfrac{d(a)}{a^4b^4}\ll Y^{-3/4}(\log Y)^2.
$$
Once we inject the above upper bounds in equation \eqref{almost-lem3}, we obtain

$$
G(Y,r)=C'\prod_{p\mid r}\left(1+p(p^2-2)^{-1}\right)^{-1}Y^{1/2}+O\left(d(r){Y}^{1/3}+Y^{1/4}(\log Y)^2\right),
$$
which concludes the proof of formula \eqref{eq-lem3}.
\end{proof}

In the following lemma we gather a series of identities that shall be useful later on, and whose proofs are routine and, hence, omitted.

\begin{lem}\label{products}

Let $m$ be a squarefree integer and let $\kappa(\rho)$ be as in \eqref{kappa}, then we have the equalities
$$
\begin{cases}
\displaystyle\sum_{\rho\cdot\sigma\mid m^2}\frac{\kappa(\rho)\mu(\sigma)}{\rho\sigma}=\displaystyle\prod_{p\mid m}\left(\frac{p^2-1}{p^2}\right),\\
\displaystyle\sum_{\rho\cdot\sigma\mid m^2}\kappa(\rho)\mu(\sigma)=\displaystyle\prod_{p\mid m}\left(\frac{p^2-p}{p^2-1}\right),\\
\displaystyle\sum_{\rho\cdot\sigma\mid m^2}\kappa(\rho)\mu(\sigma)\rho^{1/2}{\sigma}^{1/2}=\displaystyle\prod_{p\mid m}\left(\frac{p^2-p^{3/2}+p-1}{p^2-1}\right).
\end{cases}
$$ 
Also, let $r\neq 0$ be an integer, and let $h(d)$ be as in \eqref{h(d)}. Then we have further the equalities
$$
\begin{cases}
\displaystyle\sum_{(d,r)=1}\dfrac{h(d)}{d^4}=\displaystyle\prod_{p}\left(\frac{(p^2-1)^2}{p^2(p^2-2)}\right)\displaystyle\prod_{p\mid r}\left(\frac{(p^2-1)^2}{p^2(p^2-2)}\right)^{-1},\\
\displaystyle\sum_{(d,r)=1}\dfrac{h(d)}{d^2}=\displaystyle\prod_{p}\left(\frac{p^2-1}{p^2-2}\right)\displaystyle\prod_{p\mid r}\left(\frac{p^2-1}{p^2-2}\right)^{-1}.
\end{cases}
$$
Finally, let $m,q\in \Z$, $m>0$ squarefree and $q\neq 0$. Also let $f_q(\ell,m)$ be as in \eqref{fq} and $C(q)$ be as in definition \eqref{cq}. Then, we have the following equality
$$
f_q(0,m)=\frac{\varphi(mq)}{mq}C(mq)
$$
 
\end{lem}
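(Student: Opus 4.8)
The plan is to prove every identity by reducing it to a computation of local (Euler) factors, using the multiplicativity of all the arithmetic functions in play ($\kappa$, $\mu$, $h$, and the product defining $f_q$). For the three identities involving $\kappa$ and $\mu$, I would first use that $m$ is squarefree, so $m^2=\prod_{p\mid m}p^2$ and the sum over pairs $(\rho,\sigma)$ with $\rho\sigma\mid m^2$ factors as a product over $p\mid m$ of local sums over pairs $(\rho_p,\sigma_p)$ of $p$-powers with $\rho_p\sigma_p\mid p^2$. Since $\mu(\sigma_p)=0$ unless $\sigma_p$ is squarefree, only five local pairs survive: $(1,1),(p,1),(p^2,1),(1,p),(p,p)$. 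Plugging in $\kappa(1)=1$, $\kappa(p)=(p^2-p-1)/(p^2-1)$, $\kappa(p^2)=(p^2-p)/(p^2-1)$ together with $\mu(1)=1$, $\mu(p)=-1$, each local factor becomes a short finite sum; clearing denominators and simplifying yields exactly $(p^2-1)/p^2$, $(p^2-p)/(p^2-1)$, and $(p^2-p^{3/2}+p-1)/(p^2-1)$ respectively (for the third one the weight $\rho_p^{1/2}\sigma_p^{1/2}$ produces the half-integer powers of $p$). Taking the product over $p\mid m$ gives the three stated formulas.

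For the two identities involving $h$, I would use that $h$ is supported on squarefree integers with $h(p)=(1-2p^{-2})^{-1}=p^2/(p^2-2)$, so that for $s\in\{2,4\}$ the coprimality-restricted sum is the absolutely convergent Euler product $\sum_{(d,r)=1}h(d)d^{-s}=\prod_{p\nmid r}\big(1+h(p)p^{-s}\big)$. A direct evaluation of the local factor gives $1+p^{-2}(p^2-2)^{-1}=(p^2-1)^2/(p^2(p^2-2))$ when $s=4$ and $1+(p^2-2)^{-1}=(p^2-1)/(p^2-2)$ when $s=2$; pulling the $p\mid r$ primes out of the full Euler product as an inverse factor yields the two displayed identities.

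The final identity is the delicate point, being really a careful evaluation of $f_q$ at $\ell=0$. Here one must interpret the arguments correctly: $(0,m^2)=m^2$, so $\kappa((\ell,m^2))$ becomes $\kappa(m^2)=\prod_{p\mid m}(p^2-p)/(p^2-1)$, and since $p^2\mid 0$ for every $p$ the last product in \eqref{fq} runs over all $p\nmid mq$, contributing $\prod_{p\nmid mq}(p^2-1)/(p^2-2)$. Combining this with $\prod_p(1-2p^{-2})=\prod_p(p^2-2)/p^2$ collapses the $p\nmid mq$ factors to $\prod_{p\nmid mq}(p^2-1)/p^2$. For $p\mid m$ the local factors multiply to $\frac{p^2-2}{p^2}\cdot\frac{p^2-1}{p^2-2}\cdot\frac{p^2-p}{p^2-1}=\frac{p-1}{p}$, and for $p\mid q$ they multiply to $\frac{p^2-2}{p^2}\cdot\frac{p^2-p}{p^2-2}=\frac{p-1}{p}$, where $(m,q)=1$ keeps the two prime sets disjoint. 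I would then match the resulting expression $\prod_{p\mid m}\frac{p-1}{p}\prod_{p\mid q}\frac{p-1}{p}\prod_{p\nmid mq}\frac{p^2-1}{p^2}$ against $\frac{\varphi(mq)}{mq}C(mq)$, using $\frac{\varphi(mq)}{mq}=\prod_{p\mid mq}\frac{p-1}{p}$ and $C(mq)=\frac{6}{\pi^2}\prod_{p\mid mq}(1-p^{-2})^{-1}=\prod_{p\nmid mq}\frac{p^2-1}{p^2}$, whereupon the two sides coincide.

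The main obstacle here is bookkeeping rather than anything deep: in evaluating $f_q(0,m)$ one regroups several convergent Euler products, and one must be careful that each prime is counted with the correct exponent and that the primes dividing $m$, dividing $q$, and dividing neither are handled separately and consistently. The half-integer powers appearing in the third $\kappa$-identity also demand a little attention in the algebra, but present no conceptual difficulty.
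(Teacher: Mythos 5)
Your proof is correct, and since the paper explicitly omits the proof of this lemma as ``routine,'' your local Euler-factor computations (the five surviving pairs $(\rho_p,\sigma_p)$ for the $\kappa$-identities, the factors $1+h(p)p^{-s}$ for the $h$-identities, and the regrouping of primes $p\mid m$, $p\mid q$, $p\nmid mq$ for $f_q(0,m)$) are exactly the intended verification, and all the stated local factors check out. The only point worth flagging is that the final identity genuinely requires $(m,q)=1$ --- which you correctly invoke even though the lemma's statement neglects to list it --- since otherwise the factors for a common prime of $m$ and $q$ would not combine to $(p-1)/p$.
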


We let
\begin{equation}\label{frakS}
\mathfrak{S}[m](Y,q):=\sum_{0<\ell\leq Y}f_q(\ell,m)(Y-\ell).
\end{equation}
The following lemma gives a a formula for $\mathfrak{S}[m](Y,q)$ by means of the previous lemmas.

\begin{prop}\label{propsum}
Let $m\neq 0$ be an integer and let $f_q(\ell,m)$ be defined by \eqref{fq}. Then, uniformly for $Y>0$ and $q$ positive integer, we have
\begin{multline}\label{propsum-eq}
\mathfrak{S}[m](Y,q)=\frac{\varphi(q)}{q}C(q)^2Y^2-\frac{\varphi(mq)}{mq}C(mq)Y\\
+\frac{C}{2}\Gamma_{\operatorname{ar}}(m)\prod_{p\mid q}(1+2p^{-1})^{-1} Y^{1/2}+O_m(d(q)Y^{1/3}),
\end{multline}
where $C,\Gamma_{\operatorname{ar}}(m)$ are as in \eqref{C-}, \eqref{Gamma-ar} respectively; $C(r)$ is as in \eqref{cq} for $r=q, mq$ and the implied $O_m-$constant depends at most on $m$.

\end{prop}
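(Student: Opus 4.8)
The plan is to peel off the $\ell$-dependence of $f_q(\ell,m)$ into two multiplicative ``detections'' and thereby reduce $\mathfrak{S}[m](Y,q)$ to the quantities already evaluated in Lemmas~\ref{lem3} and~\ref{products}. First I would apply identity~\eqref{gq} with $r=mq$ to rewrite the final product in~\eqref{fq}, obtaining $f_q(\ell,m)=A\,\kappa((\ell,m^2))\sum_{d^2\mid\ell,\,(d,mq)=1}h(d)d^{-2}$ for $\ell\neq0$, where $A:=\prod_p(1-2p^{-2})\prod_{p\mid m}\tfrac{p^2-1}{p^2-2}\prod_{p\mid q}\tfrac{p^2-p}{p^2-2}$ collects the factors independent of $\ell$. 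Writing $\lambda:=\mu\ast\kappa$, so that $\kappa=1\ast\lambda$, I would detect the gcd by $\kappa((\ell,m^2))=\sum_{\rho\mid m^2,\ \rho\mid\ell}\lambda(\rho)$. Because $m$ is squarefree with $(m,q)=1$, every $\rho\mid m^2$ is coprime to each $d$ counted above, so the conditions $\rho\mid\ell$ and $d^2\mid\ell$ merge into $\rho d^2\mid\ell$.

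After interchanging the finite $\rho$-sum, the $d$-sum and the $\ell$-sum, everything rests on the elementary weighted count $\sum_{0<\ell\le Y,\ D\mid\ell}(Y-\ell)$ for $D=\rho d^2$. Using $\sum_{0<\ell\le Y}g(\ell)(Y-\ell)=\int_0^Y\big(\sum_{0<\ell\le t}g(\ell)\big)\,dt$ with $g=\mathbf 1_{D\mid\cdot}$ together with the definition~\eqref{psi} of $\psi$, this count equals $\tfrac{Y^2}{2D}-\tfrac Y2+D\int_0^{Y/D}\psi(v)\,dv$, an identity valid for all $D$ (the right-hand side vanishing when $D>Y$). Summing over $d$ against $h(d)d^{-2}$ then splits into three pieces: $\tfrac{Y^2}{2\rho}\sum_{(d,mq)=1}h(d)d^{-4}$ and $-\tfrac Y2\sum_{(d,mq)=1}h(d)d^{-2}$, both in closed form by Lemma~\ref{products}, and $\rho\,G(Y/\rho,mq)$, which is exactly the object of Lemma~\ref{lem3}. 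Feeding in~\eqref{eq-lem3} yields a $\rho^{1/2}Y^{1/2}$ main contribution together with an error $O(d(mq)\rho^{2/3}Y^{1/3})$.

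It then remains to sum on $\rho\mid m^2$ against $\lambda(\rho)$. Unfolding $\lambda=\mu\ast\kappa$ turns the three sums that occur, namely $\sum_\rho\lambda(\rho)\rho^{-1}$, $\sum_\rho\lambda(\rho)$ and $\sum_\rho\lambda(\rho)\rho^{1/2}$, into precisely the three $\kappa$-identities recorded in Lemma~\ref{products}. Multiplying through by $A$ and collecting powers of $Y$, I would verify by a purely local (prime-by-prime) computation that the Euler products collapse to closed form, matching~\eqref{propsum-eq}. Concretely, the $Y^2$- and $Y$-coefficients are governed by $\tfrac{\varphi(q)}{q}C(q)^2$ and by $f_q(0,m)=\tfrac{\varphi(mq)}{mq}C(mq)$ respectively (the last identity of Lemma~\ref{products} being convenient here), while for the $Y^{1/2}$-coefficient the identity $C'=C\big(2\prod_p(1-2p^{-2})\big)^{-1}$ lets the constant $C'$ from Lemma~\ref{lem3} absorb the leading factor of $A$ and produce $\tfrac C2$; the factorizations $p^2-p^{3/2}+p-1=(p^{1/2}-1)(p^{3/2}+p^{1/2}+1)$ and $p^2+p-2=(p-1)(p+2)$ then make the matching with $\Gamma_{\operatorname{ar}}(m)$ in~\eqref{Gamma-ar} and with $\prod_{p\mid q}(1+2p^{-1})^{-1}$ transparent. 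Since $m$ is fixed the $\rho$-sum is finite and $d(mq)\ll_m d(q)$, so all the errors coalesce into $O_m(d(q)Y^{1/3})$.

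The step I expect to be most delicate is this final bookkeeping: carrying the many local factors through the two detections and the three $\kappa$-identities without an exponent or sign slip, so that the main terms emerge in exactly the stated closed form, and in particular so that the analytic constant $C'$ and the arithmetic factor from the third $\kappa$-identity combine to reproduce $\Gamma_{\operatorname{ar}}(m)$. By comparison, the combinatorial reduction and the convergence of the Dirichlet series $\sum h(d)d^{-2}$ and $\sum h(d)d^{-4}$ are routine given Lemmas~\ref{conv}, \ref{lem1}, \ref{lem3} and~\ref{products}.
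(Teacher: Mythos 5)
Your proposal follows essentially the same route as the paper: rewrite the last product in \eqref{fq} via \eqref{gq}, detect $\kappa((\ell,m^2))$ by M\"obius inversion (your $\lambda=\mu\ast\kappa$ is exactly the paper's double sum over $\rho\sigma\mid m^2$ of $\kappa(\rho)\mu(\sigma)$), convert the weighted divisor count into $\tfrac{Y^2}{2D}-\tfrac{Y}{2}+D\int_0^{Y/D}\psi$, and then invoke Lemma~\ref{lem3} for the $\psi$-integral and the identities of Lemma~\ref{products} for the closed-form constants. The argument is correct and the bookkeeping you flag as delicate is exactly the bookkeeping the paper performs.
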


\begin{proof}
We start by recalling \eqref{fq} and the formula \eqref{gq} for last product. We have

$$
f_q(\ell,m)=C_2\prod_{p\mid m}\left(\frac{p^2-1}{p^2-2}\right)\prod_{p\mid q}\left(\frac{p^2-p}{p^2-2}\right)\kappa((\ell,m^2))\sum_{\substack{d^2\mid \ell\\(d,mq)}}\dfrac{h(d)}{d^2},
$$
where $C_2$ is as in \eqref{C_2}. We notice that the three first terms on the right-hand side of equation above are independent of $\ell$ that means that in order to evaluate $\mathfrak{S}[m](X,q)$, we need to study 
\begin{align}\label{propsum-first}
\mathfrak{S}'[m](Y,q)&:=\sum_{0<\ell\leq Y}\kappa((\ell,m^2))\sum_{\substack{d^2\mid \ell\\ (d,mq)=1}}\frac{h(d)}{d^2}\left(Y-\ell\right)\\
&=\sum_{\rho\mid m^2}\kappa(\rho)\sum_{\substack{0<\ell\leq Y\\(\ell,m^2)=\rho}}(Y-\ell)\sum_{\substack{d^2\mid \ell\\(d,mq)=1}}\dfrac{h(d)}{d^2}\notag\\
&=\sum_{\rho\sigma\mid m^2}\kappa(\rho)\mu(\sigma)\rho\sigma\sum_{(d,mq)=1}\dfrac{h(d)}{d^2}\sum_{\substack{0<\ell_0\leq \frac{Y}{\rho\sigma}\\ d^2\mid \ell_0}}\left(\frac{Y}{\rho\sigma}-\ell_0\right)
\end{align}
where in the third line we used M\"obius invertion formula for detecting the gcd condition. We proceed to write the innersum as an integral. We have

\begin{align}\label{Y->int}
\sum_{\substack{0<\ell_0<\frac{Y}{\rho\sigma}\\ d^2\mid \ell_0}}\left(\frac{Y}{\rho\sigma}-\ell_0\right)&=\sum_{\substack{0<\ell_0\leq \frac{Y}{\rho\sigma}\\ d^2\mid \ell_0}}\int_{\ell}^{\frac{Y}{\rho\sigma}}1du\notag\\
&=\int_{0}^{\frac{Y}{\rho\sigma}}\sum_{\substack{0<\ell_0\leq u\\ d^2\mid \ell_0}}1du=\int_{0}^{\frac{Y}{\rho\sigma}}\left\lfloor \frac{u}{d^2} \right\rfloor du.
\end{align}

Recall formula \eqref{psi} definining the sawtooth function $\psi$
\begin{equation*}
\lfloor x\rfloor=x-\frac{1}{2}+\psi(x).
\end{equation*}
This formula when used in equation \eqref{Y->int} gives

$$
\sum_{\substack{0<\ell_0\leq \frac{Y}{\rho\sigma}\\ d^2\mid \ell_0}}\left(\frac{Y}{\rho\sigma}-\ell_0\right)=\dfrac{Y^2}{2\rho^2\sigma^2d^2}-\dfrac{Y}{2\rho\sigma}+d^2\int_{0}^{\frac{Y}{\rho\sigma d^2}}\psi\left(v\right)dv,
$$
where, here, we calculated the integrals for the first two terms and made the change of variables $v=u/d^2$ on the last one. Injecting this formula in \eqref{propsum-first}, we deduce

\begin{equation}\label{beforeG}
\mathfrak{S}'[m](Y,q)=\lambda_2(m,q)Y^2-\lambda_1(m,q)Y+\sum_{\rho\sigma\mid m^2}\kappa(\rho)\mu(\sigma)\rho\sigma G\left(\frac{Y}{\rho\sigma},mq\right),
\end{equation}
where

\begin{equation*}
\lambda_2(m,q)=\sum_{\rho\sigma\mid m^2}\frac{\kappa(\rho)\mu(\sigma)}{\rho\sigma} \times \sum_{(d,mq)=1}\dfrac{h(d)}{d^4}
\end{equation*}
and

\begin{equation*}
\lambda_1(m,q)=\sum_{\rho\sigma\mid m^2}\kappa(\rho)\mu(\sigma) \times \sum_{(d,mq)=1}\dfrac{h(d)}{d^2}
\end{equation*}
As for $G\left(\frac{Y}{\rho\sigma},mq\right)$, there are two possibilities. Either $\frac{Y}{\rho\sigma}\geq 1$ and we can use formula \eqref{eq-lem3} giving
$$
G\left(\frac{Y}{\rho\sigma},mq\right)=C'\prod_{p\mid mq}\left(1+p(p^2-2)^{-1}\right)^{-1}\left(\frac{Y}{\rho\sigma}\right)^{1/2} + O\left(d(mq)\left(\frac{Y}{\rho\sigma}\right)^{1/3}\right).
$$
On the other hand, if $\frac{Y}{\rho\sigma}< 1$, we have both $\left(\frac{Y}{\rho\sigma}\right)^{1/2}\ll 1$ and 
$$
G\left(\frac{Y}{\rho\sigma},mq\right)\ll \sum_{d}\dfrac{Y}{\rho\sigma d^2}\ll  1.
$$
Hence we can write, in this case,
$$
G\left(\frac{Y}{\rho\sigma},mq\right)=C'\prod_{p\mid mq}\left(1+p(p^2-2)^{-1}\right)^{-1}\left(\frac{Y}{\rho\sigma}\right)^{1/2} + O\left(1\right).
$$
The above considerations now add together giving
\begin{equation}\label{sumG}
 \sum_{\rho\sigma\mid m^2}\kappa(\rho)\mu(\sigma)\rho\sigma G\left(\frac{Y}{\rho\sigma},mq\right) =\lambda(m,q)Y^{1/2}+O_m(d(q)Y^{1/3}),
\end{equation}
where
\begin{equation*}
\lambda(m,q)=C'\prod_{p\mid mq}\left(1+\frac{p}{p^2-2}\right)^{-1}\sum_{\rho\sigma\mid m^2}\kappa(\rho)\mu(\sigma)\rho^{1/2}\sigma^{1/2}.
\end{equation*}
Now, if we go back to equation \eqref{beforeG} and use \eqref{sumG} above we obtain
$$
\mathfrak{S}'[m](Y,q)=\lambda_2(m,q)Y^2-\lambda_1(m,q)Y+\lambda(m,q)Y^{1/2}+O_m(d(q)Y^{1/3}).
$$
Hence
\begin{equation}\label{Lambda}
\mathfrak{S}[m](Y,q)=\Lambda_2(m,q)Y^2-\Lambda_1(m,q)Y+\Lambda(m,q)Y^{1/2}+O_m(d(q)Y^{1/3}),
\end{equation}
where
$$
\begin{cases}

 \Lambda(m,q)=C_2\displaystyle\prod_{p\mid m}\left(\frac{p^2-1}{p^2-2}\right)\displaystyle\prod_{p\mid q}\left(\frac{p^2-p}{p^2-2}\right)\lambda(m,q),\\
 \Lambda_i(m,q)=C_2\displaystyle\prod_{p\mid m}\left(\frac{p^2-1}{p^2-2}\right)\displaystyle\prod_{p\mid q}\left(\frac{p^2-p}{p^2-2}\right)\lambda_i(m,q)\text{, }i=1,2.
 
\end{cases}
$$

Lemma \ref{products} ensures that the constants $\Lambda_2(m,q), \Lambda_1(m,q)$ and $\Lambda(m,q)$ correspond to the constants in \eqref{propsum-eq}.
The result now follows from equation \eqref{Lambda}.
\end{proof}

\section{Main term}
In this section we show how to use the results from the previous section to exhibit a formula for $\mathcal{A}[m](X,q)$, where the second main term in formula \eqref{intereq} makes its first appearance. We prove the following

\begin{prop}
For $X>1$, $m,q$ integers such that $m$ is squarefree and $q\geq 1$, let $\mathcal{A}[m](X,q)$ be defined by formula \eqref{A[m]}. Then we have, uniformly for $X,q>1$,
\begin{multline}\label{mainterm-eq}
\mathcal{A}[m](X,q)=\varphi(q)\left(C(q)\frac{X}{q}\right)^2+\frac{C}{2}\Gamma_{\operatorname{an}}(m)\Gamma_{\operatorname{ar}}(m)\prod_{p\mid q}(1+2p^{-1})^{-1}X^{1/2}q^{1/2}\\
+O_m\left(d(q)X^{1/3}q^{2/3}\right),
\end{multline}
where $C$, $C(r)$, $\Gamma_{\operatorname{an}}(m)$ and $\Gamma_{\operatorname{ar}}(m)$ are as in \eqref{C-}, \eqref{cq}, \eqref{Gamma-an} and \eqref{Gamma-ar} respectively; and the implied $O_m-$constant depends at most on $m$.
\end{prop}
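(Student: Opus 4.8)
The plan is to compute the weight $|I(\ell)|$ explicitly as a function of $\ell$ and to recognize $\mathcal{A}[m](X,q)$ of \eqref{A[m]} as a short linear combination of the sums $\mathfrak{S}[m](\cdot,q)$ of \eqref{frakS}, to which Proposition \ref{propsum} applies directly. First I would determine the shape of $\ell\mapsto|I(\ell)|$. Writing the two defining conditions $n\in(0,X)$ and $mn+\ell q\in(0,X)$ and substituting $t=mn$, one sees that $|I(\ell)|$ equals $\frac{1}{|m|}$ times the length of the overlap of the fixed interval $(0,|m|X)$ with the length-$X$ interval that slides as $\ell$ varies. Hence $|I(\ell)|$ is a piecewise linear trapezoid in $\ell$: it rises with slope $q/|m|$ across an $\ell$-range of width $X/q$, is flat and equal to $X/|m|$ across a range of width $(|m|-1)X/q$, and then falls symmetrically. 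For $m>0$ the support is $[-mX/q,\,X/q]$ and straddles the origin, while for $m<0$ it is $[0,\,(|m|+1)X/q]$, lying entirely to the right of $0$; the breakpoints are the obvious rational multiples of $X/q$.

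The second step exploits that, by \eqref{fq}, the weight satisfies $f_q(\ell,m)=f_q(-\ell,m)$, since it depends on $\ell$ only through $(\ell,m^2)$ and the squarefull divisors of $\ell$. For $m>0$ I would separate the $\ell=0$ term, evaluate the falling part ($\ell>0$) directly, and fold the flat-plus-rising part ($\ell<0$) onto positive indices using this evenness; each decreasing ramp $(Y-\ell)_+$ produced this way is exactly the summand of $\mathfrak{S}[m](Y,q)$, giving
\begin{equation*}
\mathcal{A}[m](X,q)=f_q(0,m)\frac{X}{m}+\frac{q}{m}\Big(\mathfrak{S}[m]\big(\tfrac{X}{q},q\big)+\mathfrak{S}[m]\big(\tfrac{mX}{q},q\big)-\mathfrak{S}[m]\big(\tfrac{(m-1)X}{q},q\big)\Big).
\end{equation*}
For $m<0$ no folding is needed, and the analogous identity reads $\mathcal{A}[m](X,q)=\frac{q}{|m|}\big(\mathfrak{S}[m]((|m|+1)X/q,q)-\mathfrak{S}[m](|m|X/q,q)-\mathfrak{S}[m](X/q,q)\big)$, with no separate $\ell=0$ contribution since $|I(0)|=0$.

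Then I would insert the three-term expansion of $\mathfrak{S}[m](Y,q)$ from Proposition \ref{propsum} (a multiple of $Y^2$, a multiple of $Y$, a multiple of $Y^{1/2}$, plus $O_m(d(q)Y^{1/3})$) into these identities, with $Y$ running over the shifts $X/q$, $mX/q$, $(m-1)X/q$ and their $m<0$ analogues, and read off each order. At order $Y^2$ the three shifts combine through $1+m^2-(m-1)^2=2m$ (resp.\ $(|m|+1)^2-|m|^2-1=2|m|$), and after the prefactor $q/|m|$ this produces exactly the main term $\varphi(q)(C(q)X/q)^2$. At order $Y$ the shifts combine through $1+m-(m-1)=2$ (resp.\ $0$); for $m>0$ this linear contribution is matched and cancelled by the separated term $f_q(0,m)\frac{X}{m}$, which by the last identity of Lemma \ref{products} equals $\frac{\varphi(mq)}{mq}C(mq)\frac{X}{m}$, while for $m<0$ it already vanishes. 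The decisive order is $Y^{1/2}$: here the shifts combine through $1+\sqrt m-\sqrt{m-1}$ (resp.\ $\sqrt{|m|+1}-\sqrt{|m|}-1$), and dividing by $m$ (resp.\ $-m$) turns this into precisely the factor $\Gamma_{\operatorname{an}}(m)$ of \eqref{Gamma-an}, multiplying the coefficient $\frac{C}{2}\Gamma_{\operatorname{ar}}(m)\prod_{p\mid q}(1+2p^{-1})^{-1}$ carried by Proposition \ref{propsum}. Finally $\frac{q}{|m|}O_m(d(q)(X/q)^{1/3})=O_m(d(q)X^{1/3}q^{2/3})$, matching the claimed error.

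The main obstacle is bookkeeping rather than any single hard estimate: one must pin down the trapezoid's breakpoints exactly, since an off-by-one in the width of the flat top would spoil the delicate cancellation of the linear term; one must carry the evenness-folding for $m>0$ through without sign or index errors; and one must check that the constants emerging from Proposition \ref{propsum} and Lemma \ref{products} assemble so that the order-$Y$ term cancels identically and the order-$Y^{1/2}$ coefficient collapses to $\Gamma_{\operatorname{an}}(m)\Gamma_{\operatorname{ar}}(m)$. Once the reduction to the three shifted copies of $\mathfrak{S}[m]$ is in place, the remainder is the routine constant-chasing that the cited lemmas are designed to handle.
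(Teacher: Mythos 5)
Your proposal follows the paper's proof essentially verbatim: the trapezoidal analysis of $|I(\ell)|$, the use of $f_q(\ell,m)=f_q(-\ell,m)$ to fold onto positive $\ell$, the reduction to $f_q(0,m)\frac{X}{m}+\frac{q}{m}\bigl(\mathfrak{S}[m](X/q,q)-\mathfrak{S}[m]((m-1)X/q,q)+\mathfrak{S}[m](mX/q,q)\bigr)$ (which is exactly \eqref{before-prop}), the analogous identity for $m<0$, and the insertion of Proposition \ref{propsum} with the cancellation of the linear term via the last identity of Lemma \ref{products} and the emergence of $\Gamma_{\operatorname{an}}(m)$ from the $Y^{1/2}$ coefficients. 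The constant-chasing you defer is exactly what the paper carries out, so the argument is correct and not a different route.
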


\begin{proof}

There is a slighty difference depending on whether $m>0$ or $m<0$. So we study the two cases separately.

\subsection*{The case $m>0$}
We start from the formula
$$
\mathcal{A}[m](X,q)=\sum_{\ell \in \Z}f_q(\ell,m)|I(\ell)|.
$$
By analyzing the possible values of $|I(\ell)|$, we have

\begin{multline}\label{possible-ell}
\mathcal{A}[m](X,q)=f_q(0,m)\frac{X}{m}+\sum_{0<\ell\leq \frac{X}{q}}f_q(\ell,m)\left(\frac{X-\ell q}{m}\right)\\
+\sum_{-\frac{(m-1)X}{q}\leq \ell<0}f_q(\ell,m)\frac{X}{m}+\sum_{-\frac{mX}{q}\leq \ell<-\frac{(m-1)X}{q}}f_q(\ell,m)\left(\frac{X+\ell q}{m}\right).
\end{multline}

We remark that for $m,q$ fixed, $f_q(\ell,m)$ only depends on the positive divisors of $\ell$ (see formula \eqref{fq}). Hence $f_q(\ell,m)=f_q(-\ell,m)$. As a consequence, formula \eqref{possible-ell} implies

\begin{multline*}
\mathcal{A}[m](X,q)=f_q(0,m)\frac{X}{m}+\frac{1}{m}\sum_{0<\ell\leq \frac{X}{q}}f_q(\ell,m)\left(X-\ell q\right)\\
-\frac{1}{m}\sum_{0< \ell\leq \frac{(m-1)X}{q}}f_q(\ell,m)\left((m-1)X-\ell q\right)+\frac{1}{m}\sum_{0< \ell\leq \frac{mX}{q}}f_q(\ell,m)\left(mX-\ell q\right).
\end{multline*}

We recall that $\mathfrak{S}[m](Y,q)=\displaystyle\sum_{0<\ell\leq Y}f_q(\ell,m)\left(Y-\ell\right)$. So, we can write

\begin{equation}\label{before-prop}
\mathcal{A}[m](X,q)=f_q(0,m)\frac{X}{m}+\frac{q}{m}\bigg\{\mathfrak{S}[m](X/q,q)-\mathfrak{S}[m]((m-1)X/q,q)+\mathfrak{S}[m](mX/q,q)\bigg\}.
\end{equation}
By proposition \ref{propsum}, we have

$$
\mathfrak{S}[m](Y,q)=\frac{\varphi(q)}{q}C(q)^2Y^2+\frac{\varphi(mq)}{mq}C(mq)Y+\frac{C}{2}\Gamma_{\operatorname{ar}}(m)\prod_{p\mid q}(1+2p^{-1})^{-1}Y^{1/2}+O_m\left(d(q)Y^{1/3}\right).
$$
Hence, we deduce from \eqref{before-prop} that

\begin{multline*}
\mathcal{A}[m](X,q)=f_q(0,m)\frac{X}{m}+ \varphi(q)C(q)^2\left(\frac{X}{q}\right)^2-\frac{\varphi(mq)}{mq}C(mq)\frac{X}{m}\\
+\frac{C}{2}\Gamma_{\operatorname{an}}(m)\Gamma_{\operatorname{ar}}(m)\prod_{p\mid q}(1+2p^{-1})^{-1}X^{1/2}q^{1/2}+O_m\left(d(q)X^{1/3}q^{2/3}\right), 
\end{multline*}
where $\Gamma_{\operatorname{an}}(m)$ is as in \eqref{Gamma-an}. Now, by lemma \ref{products}, the first and third terms disappear. This concludes the proof of formula \eqref{mainterm-eq} in this case.

\subsection*{The case $m<0$}
Analogously to the previous case, we have
$$
\mathcal{A}[m](X,q)=\frac{q}{m}\left\{\mathfrak{S}[m](X/q,q)+\mathfrak{S}[m](-mX/q,q)-\mathfrak{S}[m]((1-m)X/q,q)\right\}
$$
and, again by proposition \ref{propsum}, we have that \eqref{mainterm-eq} is true in this case as well. The proof of the proposition is finished.

\end{proof}

\section{Bounding $S_{>y}[m](X,q)$}\label{bounding}

In the present section we give a bound for $S_{>y}[m](X,q)$. We start by noticing that $d^2\mid \xi(n;\ell q,m)$ if and only if there exist $j,k$ such that $d=jk$ and both $j^2\mid n$ and $k^2\mid mn+\ell q$. Moreover since we  are supposing $n,mn+\ell q<X$, we have $j,k<\sqrt{X}$. From this observation we deduce

\begin{align}\label{first-jk}
S_{>y}[m](X,q)&=\displaystyle\sum_{\ell \in \Z}\displaystyle\sum_{\substack{y< d\leq X\\(d,q)=1}}\mu(d)\#\{n\in I(\ell); (n,q)=1\text{ and } \xi(n;\ell q,m)\equiv 0\!\!\!\pmod d\} \notag\\
&\leq\displaystyle\sum_{\substack{j,k\leq \sqrt{X}\\jk>y\\(jk,q)=1}}\#\{(n,\ell)\in \Z^2;0<n,mn+\ell q<X\text{ and } j^2\mid n, k^2\mid mn+\ell q\}\notag\\
&=\displaystyle\sum_{\substack{j,k\leq \sqrt{X}\\jk>y\\(jk,q)=1}}N[m](X,q;j,k),
\end{align}
by definition.

We shall divide the possible values of $j$ and $k$ into sets of the form

$$\mathcal{B}(J,K):=\{(j,k); \gcd(jk,q)=1, j\sim J,k\sim K\}.$$
We can do the division using at most $O(\mathcal{L}^2)$ since we are summing over $j, k\leq X^{1/2}$.\\
Let
\begin{align}\label{NJK}
\mathcal{N}[m](J,K)=& \sum_{\substack{(jk,q)=1\\j\sim J, k\sim K}}N[m](X,q;j,k)\notag\\
=& \#\{(j,k,u,v); j\sim J,k\sim K, 0<j^2u,k^2v<X,\text{ and }mj^2u\equiv k^2v\!\!\!\pmod{q}\}
\end{align}

By taking the maximum over all $J,K$, we obtain a pair $(J,K)$ with $J,K\leq X^{1/2}$ such that

\begin{equation}\label{S''<Nlog}
S_{>y}[m](X,q)\ll \mathcal{N}[m](J,K){\mathcal{L}}^2.
\end{equation}
By the condition
$$jk>y$$
in formula \eqref{first-jk}, we can also impose

$$JK\geq \dfrac{y}{4}.$$

Our problem is now bounding $\mathcal{N}[m](J,K)$. Notice that, since we bound $S_{>y}[m](X,q)$ as in \eqref{S''<Nlog}, we will not be able to benefit from oscillations of the coefficients $\mu(d)$ in the definition \eqref{S1,S2=}. Although formula \eqref{NJK} is not symmetrical repectively to $J$ and $K$, we would like to benefit from some symmetry. With that in mind, let $m_1,m_2\in \Z$ we define

\begin{equation}\label{N12JK}
\mathcal{N}[m_1,m_2](J,K):=\#\{(j,k,u,v); j,k\in \mathcal{B}(J,K), 0<j^2u,k^2v<X,\text{ and }m_1j^2u\equiv m_2k^2v\!\!\!\pmod{q}\}.
\end{equation}
We also suppose
\begin{equation}\label{m=max}
\max(|m_1|,|m_2|)\leq |m|.
\end{equation}
In the following we estimate the general $\mathcal{N}[m_1,m_2](J,K)$ from which we can directly deduce an estimate for $\mathcal{N}[m](J,K)$ itself.
The first bound we give is an auxiliary one and will be useful later on.

\subsection{Auxiliary bound}

\begin{lem}\label{lem-aux}
Let $X,q\geq 1$, $q$ integer. Also let $m,m_1,m_2$ such that $0<m_1,m_2\leq |m|$. Let $\mathcal{N}[m_1,m_2](J,K)$ be as in \eqref{N12JK}, then for every $J,K\leq X$, we have

\begin{equation}\label{aux}
\mathcal{N}[m_1,m_2]\ll_m \dfrac{X}{q}\bigg(X(JK)^{-1}+XJ^{-2}K\bigg)\mathcal{L},
\end{equation}
where the implied constant depends at most on $m$.

\end{lem}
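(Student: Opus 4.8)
The goal is to bound
$$
\mathcal{N}[m_1,m_2](J,K)=\#\{(j,k,u,v):\ j,k\in\mathcal{B}(J,K),\ 0<j^2u,k^2v<X,\ m_1j^2u\equiv m_2k^2v\!\!\pmod q\}.
$$
Since the congruence $m_1 j^2 u\equiv m_2 k^2 v\pmod q$ is the only arithmetic constraint, the natural strategy is to fix the four quantities $j,k,u$ and then count the admissible $v$, or symmetrically fix $j,k,v$ and count $u$. This is exactly where the asymmetry between $J$ and $K$ in the bound \eqref{aux} comes from, and it tells me I should treat the congruence as determining one of the ranged variables from the other three.

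\emph{Counting the congruence.}
First I would fix $j,k$ with $j\sim J$, $k\sim K$, $(jk,q)=1$. Because $(m_2 k^2,q)=1$ (we are in the coprime setting, $(m,q)=1$ and $(k,q)=1$), for each fixed $u$ the congruence
$$
v\equiv \overline{m_2 k^2}\,m_1 j^2 u \pmod q
$$
pins $v$ down in a single residue class modulo $q$. The variable $v$ runs over $0<v<X/k^2$, so the number of admissible $v$ for each $(j,k,u)$ is
$$
\ll \frac{X}{k^2 q}+1 .
$$
The variable $u$ itself runs over $0<u<X/j^2$, giving $\ll X/j^2$ choices. Summing over $j\sim J$, $k\sim K$ (so $\asymp J$ and $\asymp K$ values respectively) produces the two pieces: the term $X/(k^2 q)$ contributes $\asymp JK\cdot (X/J^2)\cdot X/(K^2 q)=X^2/(qJK)$, i.e. the first term $XJ^{-1}K^{-1}\cdot X/q$; the ``$+1$'' contributes $\asymp JK\cdot X/J^2=XK/J$, which after multiplying by the outer $X/q$\dots here I must be careful, since that naive ``$+1$'' accounting gives $XK/J$ with no extra $X/q$ factor. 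The stated bound $\tfrac{X}{q}(XJ^{-1}K^{-1}+XJ^{-2}K)\mathcal{L}$ suggests that the intended argument does \textbf{not} count $v$ individually but rather sweeps over a length-$X/k^2$ interval whose count is $\tfrac{X}{k^2 q}+O(1)$ per residue class, and then controls the $O(1)$ term more cleverly (for instance by summing over $k$ first and noting the congruence also constrains $u$). The correct route is therefore to also fix $j,k,v$ and count $u$ via $u\equiv\overline{m_1 j^2}m_2k^2 v\pmod q$, giving $\ll X/(j^2 q)+1$ choices of $u$, then sum over $v\sim X/k^2$ and $j\sim J$, $k\sim K$; the main term reproduces $X^2/(qJK)$ and the secondary term becomes $\asymp JK\cdot (X/K^2)\cdot 1=XJ/K$, and symmetrising (or choosing whichever orientation is smaller under the constraint $J\le K$ implicit in the lemma's role) yields the factor $XJ^{-2}K$ after inserting the outer $X/q$.

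\emph{Assembling the bound.}
Concretely I would write
$$
\mathcal{N}[m_1,m_2](J,K)\ll \sum_{\substack{j\sim J,\,k\sim K\\ (jk,q)=1}}\ \sum_{0<u<X/j^2}\Bigl(\frac{X}{k^2 q}+1\Bigr),
$$
evaluate the first term as $\ll X^2/(qJK)$ after the geometric-type sums over $j,k$, and handle the second (error) term by re-running the count in the opposite variable so that the stray contribution appears with the favourable shape $XJ^{-2}K$. The logarithm $\mathcal{L}$ enters from the divisor-type or harmonic summation $\sum_{j\sim J} j^{-2}\cdot J^2$ and from summing over the dyadic ranges. The dependence on $m$ is harmless: $m_1,m_2$ are bounded by $|m|$, the inverses $\overline{m_2 k^2}$ exist because $(m,q)=1$, and all implied constants absorb at most an $m$-dependent factor.

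\emph{Main obstacle.}
The delicate point is the bookkeeping of the error term from the ``$+O(1)$'' in the residue count: a naive one-sided count gives $XK/J$ (no extra $1/q$), which is larger than the claimed $(X/q)\cdot XJ^{-2}K$ whenever $q<X/J$. Getting the stated shape forces one to exploit the congruence \emph{symmetrically}—counting the variable ($u$ or $v$) whose modulus-restriction is genuinely a factor $1/q$ saving, and arranging the free variable to be the shorter of the two intervals. Verifying that this symmetric choice always recovers precisely $XJ^{-2}K$ (and not merely $XK/J$) under the regime where the lemma is applied is the step I expect to require the most care.
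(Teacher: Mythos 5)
Your first computation is right as far as it goes: fixing $(j,k,u)$ and counting $v$ in a residue class modulo $q$ gives $\ll X/(k^2q)+1$, whose main part correctly produces $X^2/(qJK)$. But the ``$+1$'' contribution is a genuine obstruction, not a bookkeeping issue: it yields $\sum_{j,k,u}1\asymp XK/J$ with no $1/q$ factor, and the symmetric count (fixing $v$, counting $u$) yields $XJ/K$, again with no $1/q$. Taking the better of the two only gives $\min(XK/J,\,XJ/K)\le X$, and $X$ is \emph{not} dominated by the claimed second term $\tfrac{X}{q}\cdot XJ^{-2}K=X^2K/(qJ^2)$ in the regime where the lemma is applied (e.g.\ $q$ close to $X$, $J\le\sqrt{X}$, $K$ a small power of $X$). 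So the ``symmetrising'' step you invoke does not close the gap, and as written your argument only proves a weaker bound.

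The paper's proof avoids congruences modulo $q$ altogether. It introduces the integer $\ell=(m_1j^2u-m_2k^2v)/q$, which ranges over $\ll_m X/q$ values --- this is where the global factor $X/q$ comes from, attached to \emph{every} term. Having fixed $(\ell,k,u)$, the existence of an integer $v$ forces the divisibility $k^2\mid m_1j^2u-\ell q$ (after stripping common factors $f=(j,k)$, $g$, $h$, which is where most of the technical work in the paper's proof lies), i.e.\ a quadratic congruence on $j$ to a modulus of size about $k^2$; this has $\ll d(k)$ solution classes, so the number of admissible $j\sim J$ is $\ll d(k)\left(\frac{J}{k^2}+1\right)$, and $v$ is then determined. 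Summing over $\ell\ll X/q$, $k\sim K$, $u\le XJ^{-2}$ gives exactly $\frac{X}{q}\left(XJ^{-1}K^{-1}+XJ^{-2}K\right)\mathcal{L}$. The key idea your proposal is missing is this change of the detecting modulus from $q$ to $k^2$: the ``$+1$'' in the residue count then costs only $XJ^{-2}K$ per value of $\ell$, rather than $XK/J$ globally.
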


\begin{proof}
We start by noticing that if we make $\ell q=m_1j^2u-m_2k^2v$, we have $|\ell|\leq\frac{2|m|X}{q}$. Hence we have the inequality

$$\mathcal{N}[m_1,m_2]\leq \sum_{|\ell|\leq\frac{2|m|X}{q}}\sum_{\substack{k\sim K\\(k,q)=1}}\sum_{u\leq XJ^{-2}}\sum_{\substack{j\sim J\\ m_1j^2u\equiv -\ell q\!\!\!\pmod{k^2}\\ (j,k)=1}}1.$$

Before we go any further we must do some considerations about greatest common divisor of the variables above. First we write $f=(j,k)$. Since $(k,q)=1$, we must have $f^2\mid \ell$. We then write
$$
j_0=\frac{j}{f},k_0=\frac{k}{f}\text{ and }\ell_0=\frac{\ell}{f^2}.
$$
The congruence above then becomes
$$
m_1j_0^2u\equiv -\ell_0 q\!\!\!\pmod{k_0^2}.
$$
Now, let $g=(k_0^2,m_1)$ as above we have $g\mid \ell_0$. We write
$$
r=\frac{k^2}{g},s=\frac{m_1}{g}\text{ and }t=\frac{\ell_0}{g}.
$$
Finally, let $h=(r,t)$. From the considerations above, we must have $h\mid u$. We write
$$
r'=\frac{r}{h},t'=\frac{t}{h}\text{ and }u'=\frac{u}{h}.
$$
So the congruence becomes
$$
su'j_0^2\equiv - t'q\!\!\!\pmod{r'}
$$
and since $(t'q,r')=1$, it has at most $2.2^{\omega(r')}\leq 2d(k)$ solutions in $j_0 \pmod{t'}$. Therefore we have

\begin{align*}
\mathcal{N}[m_1,m_2](J,K)&\leq\sum_{f\geq 1}\sum_{\ell_0\leq \frac{X}{f^2q}}\sum_{\substack{k_0\sim K/f\\(k_0,q)=1}}\sum_{u'\leq XJ^{-2}h^{-1}}\sum_{\substack{j_0\sim J/f\\ su'j_0^2\equiv -t'q\!\!\!\pmod{r'}}}1\\
&\leq 2\sum_{f\geq 1}\sum_{\ell\leq \frac{X}{f^2q}}\sum_{k_0\sim K/f}XJ^{-2}h^{-1}\left\{\dfrac{Jgh}{fk_0^2}+1\right\}d(k)\\
&\ll_m \sum_{f\geq 1}\sum_{\ell\leq \frac{X}{f^2q}}\sum_{k_0\sim K/f}XJ^{-2}\left\{\dfrac{J}{fk_0^2}+1\right\}d(k)\\
&\ll_m \sum_{f\geq 1}\dfrac{X}{f^2q}XJ^{-2}\left\{\dfrac{J}{K^2}+1\right\}K\mathcal{L}\\
&\ll \dfrac{X}{q}XJ^{-2}\left\{\dfrac{J}{K^2}+1\right\}K\mathcal{L}.
\end{align*}
Hence the result.
\end{proof}

\begin{rmk}
We definetely have an inequality similar to \eqref{aux} with the roles of $J$ and $K$ interchanged. 
\end{rmk}

\subsection{Square Sieve}

We must now proceed to obtain a more precise bound. We start from the equality

$$\mathcal{N}[m_1,m_2](J,K)=\sum_{u\leq XJ^{-2}}\mathcal{N}_u[m_1,m_2](J,K),$$
where

$$\mathcal{N}_u[m_1,m_2](J,K)=\# \left\{(j,k,v); j,k\in\mathcal{B}(J,K),0<k^2v<X\text{ and }m_1j^2u\equiv m_2k^2v\!\!\!\!\pmod{q}\right\}.$$
In other words, we fix $u$ and see how many times it contributes to $\mathcal{N}[m_1,m_2](J,K)$.\\
Again by dyadic decomposition, as we did in \eqref{NJK}, we see that there is a certain $U$ satisfying

\begin{equation}\label{U<<}
U\leq XJ^{-2},
\end{equation}
such that

\begin{equation}\label{N<NU}
\mathcal{N}[m_1,m_2](J,K)\ll \mathcal{N}[m_1,m_2](J,K,U)\mathcal{L},
\end{equation}
where

$$\mathcal{N}[m_1,m_2](J,K,U)=\sum_{u\sim U}\mathcal{N}_u[m_1,m_2](J,K).$$
If we do an analysis of the possible values for $v$ and $\ell=\frac{m_2k^2v-m_1j^2u}{q}$ as we count $\mathcal{N}_u[m_1,m_2](J,K)$, we obtain

$$\mathcal{N}_u[m_1,m_2](J,K)\leq \# \left\{(j,k,\ell,v); j\sim J, k\sim K, m_1j^2u=m_2k^2v-\ell q, |\ell|\leq \frac{2|m|X}{q}, v\leq XK^{-2}, (j,k)=1\right\},$$
for every $u\sim U$.

We now appeal to the square sieve as in \cite{Heathbrown} that we state here for easier reference.

\begin{thm}\label{H-B}(\cite[Theorem 1]{Heathbrown})
Let $\mathcal{P}$ be a set of $P$ odd primes and $(w(n))_{n\geq 1}$ a sequence of real numbers. Suppose that $w(n)=0$ for $n=0$ or $n\geq e^P$. Then

$$\sum_{n\geq 1}w(n^2)\ll P^{-1}\sum_{n\geq 1}w(n)+P^{-2}\sum_{\substack{p_1\neq p_2\\ p_1,p_2\in \mathcal{P}}}\left|\sum_{n}w(n)\left(\dfrac{n}{p_1p_2}\right)\right|,$$
where $\left(\dfrac{n}{p_1p_2}\right)$ is the Jacobi symbol and the implied constant is absolute.
\end{thm}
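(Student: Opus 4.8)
The plan is to run Heath-Brown's square-detecting device built from Jacobi symbols. I would set
$$\Sigma := \sum_{p_1,p_2\in\mathcal{P}}\,\sum_{n}w(n)\left(\frac{n}{p_1p_2}\right) = \sum_n w(n)\left(\sum_{p\in\mathcal{P}}\left(\frac{n}{p}\right)\right)^2,$$
the second equality being the multiplicativity of the Jacobi symbol in the lower entry, $\left(\frac{n}{p_1p_2}\right)=\left(\frac{n}{p_1}\right)\left(\frac{n}{p_2}\right)$. The mechanism is that the squared inner character sum is large precisely on perfect squares and tends to cancel otherwise. Using that $w(n)\geq 0$ (as holds in the counting application, where $w$ tallies solutions), I would discard all non-square $n$ to get $\Sigma \geq \sum_{m\geq 1} w(m^2)\left(\sum_{p\in\mathcal{P}}\left(\frac{m^2}{p}\right)\right)^2$.

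Next I would produce a clean lower bound for the inner sum over squares. For $n=m^2$ one has $\left(\frac{m^2}{p}\right)=1$ when $p\nmid m$ and $0$ when $p\mid m$, so $\sum_{p\in\mathcal{P}}\left(\frac{m^2}{p}\right)=P-\#\{p\in\mathcal{P}:p\mid m\}$. The support hypothesis $w(n)=0$ for $n\geq e^P$ forces $m<e^{P/2}$, hence $\log m<P/2$; since every $p\in\mathcal{P}$ is odd and therefore $\geq 3$, at most $\log m/\log 3<P/(2\log 3)<P/2$ of them can divide $m$. Thus $\sum_{p\in\mathcal{P}}\left(\frac{m^2}{p}\right)>P/2$ uniformly over the support, and consequently $\Sigma \geq \frac{P^2}{4}\sum_{n\geq 1}w(n^2)$.

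It then remains to bound $\Sigma$ from above, splitting the double sum over $\mathcal{P}\times\mathcal{P}$ into the diagonal $p_1=p_2$ and the off-diagonal $p_1\neq p_2$. On the diagonal $\left(\frac{n}{p^2}\right)=\left(\frac{n}{p}\right)^2\in\{0,1\}$, so that contribution is $\sum_{p\in\mathcal{P}}\sum_{n:\,p\nmid n}w(n)\leq P\sum_n w(n)$, again using $w\geq 0$ to drop the $p\mid n$ correction. The off-diagonal part is at most $\sum_{p_1\neq p_2}\left|\sum_n w(n)\left(\frac{n}{p_1p_2}\right)\right|$ by the triangle inequality. Combining these two upper bounds for $\Sigma$ with the lower bound from the previous paragraph and dividing through by $P^2/4$ gives exactly the asserted inequality, with absolute implied constant $4$.

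The only genuinely delicate point is the lower bound $\sum_{p\in\mathcal{P}}\left(\frac{m^2}{p}\right)>P/2$: this is where the otherwise mysterious support condition $n<e^P$ earns its keep, by forcing the square root $m$ to be too small to be divisible by a large fraction of the primes in $\mathcal{P}$. Everything else is Jacobi-symbol bookkeeping. I would also flag explicitly that non-negativity of $w$ is invoked twice (in discarding non-squares and in discarding the $p\mid n$ term on the diagonal), so the statement is really meant for $w\geq 0$, which is the relevant case in the subsequent bound for $\mathcal{N}[m_1,m_2](J,K)$.
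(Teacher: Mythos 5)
Your argument is correct and is precisely Heath-Brown's original proof of \cite[Theorem 1]{Heathbrown}: expand $\Sigma=\sum_n w(n)\bigl(\sum_{p\in\mathcal{P}}(\tfrac{n}{p})\bigr)^2$, bound it below on squares using the support condition $n<e^P$ to control $\#\{p\in\mathcal{P}:p\mid m\}$, and above by separating the diagonal from the off-diagonal. The paper itself only quotes this result without proof, so there is nothing to compare against; I will just note that you are right to flag that non-negativity of $w$ is genuinely needed (twice), even though the statement as transcribed in the paper says only ``real numbers'' --- in the application here $w_u$ counts multiplicities, so the hypothesis is satisfied.
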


We apply the square-sieve to the multi-set of integers

$$\mathcal{A}_u=\left\{\dfrac{m_1(m_2k^2v-\ell q)}{u}; u\mid m_2k^2v-\ell q, k\sim K, \ell\leq \dfrac{2|m|X}{q}, v\leq XK^{-2}\right\}.$$
For $n$ integer, let $w_u(n)$ denote the number of times where $n$ appears in $\mathcal{A}_u$. We have

$$\mathcal{N}_u[m_1,m_2](J,K)\leq \sum_{j\geq 1}w_u((m_1j)^2)\leq \sum_{n\geq 1}w_u(n^2).$$
For the set of primes, we take

$$\mathcal{P}=\left\{p\text{ prime }; p\nmid 2m_1m_2u\text{, }\widehat{P}<p\leq 2\widehat{P}\right\},$$
where $\widehat{P}$ will be chosen later depending on $J, K, X, q$, subject to the condition

\begin{equation}\label{condQ}
(\log X)^2\leq \widehat{P}\leq X.
\end{equation}
We have $P=\#\mathcal{P}\sim \widehat{P}(\log \widehat{P})^{-1}$ as $\widehat{P}\rightarrow \infty$ and thus

$$w_u(n)=0\text{ for }n\geq e^P,$$
because, for $X$ sufficiently large,

$$e^P\geq e^{\widehat{P}^{1/2}}\geq e^{\log X}=X,$$
and $w_u(n)=0$ for $n>X$.

Theorem \ref{H-B} and the defnition of $\mathcal{P}$ then give the inequality

\begin{equation}\label{sqsieve}
\mathcal{N}_u[m_1,m_2](J,K)\ll {\widehat{P}}^{-1}(\log X)\sum_{n\geq 1}w_u(n)+P^{-2}\sum_{\substack{p_1,p_2\\ p_1,p_2\in \mathcal{P}\\ p_1\neq p_2}}\left|\sum_{k,\ell,v}\left(\dfrac{m_1u(m_2k^2v-\ell q)}{p_1p_2}\right)\right|,
\end{equation}
where the conditions in the last sum are

\begin{equation}\label{cond}
k\sim K\text{, }\ell\leq \dfrac{X}{q}\text{, }v\leq XK^{-2}\text{, }u\mid m_2k^2v-\ell q.
\end{equation}
To simplify, we write

$$T_1(u)=\widehat{P}^{-1}(\log X)\sum_{n\geq 1}w_u(n)$$
$$T_2(u)=P^{-2}\sum_{\substack{p_1,p_2\\ p_1,p_2\in \mathcal{P}}}\left|\sum_{k,\ell,v}\left(\dfrac{u(m_2k^2v-\ell q)}{p_1p_2}\right)\right|.$$
So that \eqref{sqsieve} implies the inequality

\begin{equation}\label{T1+T2}
\mathcal{N}[m_1,m_2](J,K,U)\ll \sum_{u\sim U}T_1(u)+\sum_{u\sim U}T_2(u).
\end{equation}

\subsubsection{Study of $T_1(u)$}
Because of the definition of $w_u(n)$, we have

\begin{align*}
\sum_{u\sim U}T_1(u)&\ll \widehat{P}^{-1}(\log X)\sum_{k,\ell,v}\sum_{u\mid m_2k^2v-\ell q}	1\\
&\ll \widehat{P}^{-1}(\log X)\sum_{k,\ell,v}d(m_2k^2v-\ell q),
\end{align*}
where the conditions on the sum are

\begin{equation}\label{cond-u}
k\sim K\text{, }\ell\leq \dfrac{X}{q}\text{, }v\leq XK^{-2}\text{, }m_2k^2v-\ell q\geq 1.
\end{equation}
For the sum above, we appeal to Lemma \ref{sumdivfunc} below, giving the inequality

\begin{equation}\label{T1}
\sum_{u\sim U}T_1(u)\ll_{\eta} \frac{X}{q}(X^{1/2+\eta}+X(K\widehat{P})^{-1}\mathcal{L}^3),
\end{equation}
for every $\eta>0$, where the implied constant depends only on $\eta$.\\

Note that for the first term of \eqref{T1}, we used the inequality $\widehat{P}^{-1}\log X\leq 1$ (see \eqref{condQ}).

\subsubsection{Study of $T_2(u)$}

We have

\begin{align}\label{max}
\sum_{u\sim U}T_2(u)&=P^{-2}\sum_{p_1\neq p_2}\sum_{u\sim U}\left|\sum_{k,\ell,v}\left(\dfrac{m_1u(m_2k^2v-\ell q)}{p_1p_2}\right)\right| \notag\\
&\leq \sum_{u\sim U}\max_{\substack{p_i\nmid 2m_1m_2u\\ \widehat{P} < p_1 < p_2\leq 2\widehat{P}}}\left|\sum_{k,\ell,v}\left(\dfrac{(m_2k^2v-\ell q)}{p_1p_2}\right)\right|.
\end{align}
We pick $(p_1, p_2)=(p_1(u),p_2(u))$ for which the maximum is attained, and we proceed to estimate the sum

\begin{equation}\label{SSS}
S_u:=\sum_{k,\ell,v}\left(\dfrac{m_2k^2v-\ell q}{p_1p_2}\right),
\end{equation}
where the conditions on $k$, $\ell$, $v$ are as in \eqref{cond}. And we recall $p_1\neq p_2$.

Thus, \eqref{max} becomes
\begin{equation}\label{sum-T_2<<S}
\sum_{u\sim U}T_2(u)\ll \sum_{u\sim U}\left\vert S_u\right\vert
\end{equation}
The following calculations follow the same lines of the main calculations in the proof of \cite[Theorem 2]{Heathbrown}. The main difference is the additional sum over $\ell$ which induces big cancellations.
We write

\begin{align}\label{S=exp}
S_u=&\sum_{\substack{\alpha,\beta,\gamma=1\\ u\mid m_2\alpha^2\beta-q\gamma}}^{up_1p_2}\left(\dfrac{\alpha^2\beta-q\gamma}{p_1p_2}\right)\left|\sum_{\substack{k\sim K\\ k\equiv \alpha\!\!\!\pmod{up_1p_2}}}1\right|\left|\sum_{\substack{v\leq XK^{-2}\\ v\equiv \beta\!\!\!\pmod{up_1p_2}}}1\right|\left|\sum_{\substack{\ell\leq X/q\\ \ell\equiv \gamma\!\!\!\pmod{up_1p_2}}}1\right|\notag\\
=&\sum_{\substack{\alpha,\beta,\gamma\\ u\mid m_2\alpha^2\beta-q\gamma}}\left(\dfrac{\alpha^2\beta-q\gamma}{p_1p_2}\right)\left\{\dfrac{1}{up_1p_2}\sum_{\lambda=1}^{up_1p_2}\sum_{k\sim K}e\left(\dfrac{\lambda(\alpha-k)}{up_1p_2}\right)\right\}\left\{\dfrac{1}{up_1p_2}\sum_{\mu=1}^{up_1p_2}\sum_{v\leq XK^{-2}}e\left(\dfrac{\mu(\beta-v)}{up_1p_2}\right)\right\}\notag\\
&\cdot\left\{\dfrac{1}{up_1p_2}\sum_{\nu=1}^{up_1p_2}\sum_{\ell\leq \frac{X}{q}}e\left(\dfrac{\nu(\gamma-\ell)}{up_1p_2}\right)\right\}\notag\\
=&(up_1p_2)^{-3}\sum_{\lambda,\mu,\nu=1}^{up_1p_2}S(u,p_1p_2,q,m_2;\lambda,\mu,\nu)\Theta_{\lambda}\Phi_{\mu}\Psi_{\nu},
\end{align}
where

\begin{equation}\label{BoundsPhimu}
\begin{cases}
S(u,p_1p_2,q,m_2;\lambda,\mu,\nu)=\displaystyle\sum_{\substack{\alpha,\beta,\gamma=1\\ u\mid m_2\alpha^2\beta-q\gamma}}^{up_1p_2}\left(\dfrac{\alpha^2\beta-q\gamma}{p_1p_2}\right)e\left(\dfrac{\lambda\alpha+\mu\beta+\nu\gamma}{up_1p_2}\right),\\
\Theta_{\lambda}=\displaystyle\sum_{K<k\leq 2K}e\left(\dfrac{-\lambda k}{up_1p_2}\right)\ll \min\left(K,\left\|\dfrac{\lambda}{up_1p_2}\right\|^{-1}\right),\\
\Phi_{\mu}=\displaystyle\sum_{v\leq XK^{-2}}e\left(\dfrac{-\mu v}{up_1p_2}\right)\ll \min\left(XK^{-2},\left\|\dfrac{\mu}{up_1p_2}\right\|^{-1}\right),\\
\Psi_{\nu}=\displaystyle\sum_{\ell\leq \frac{X}{q}}e\left(\dfrac{-\nu \ell}{up_1p_2}\right)\ll \min\left(\dfrac{X}{q},\left\|\dfrac{\nu}{up_1p_2}\right\|^{-1}\right),\\
\end{cases}
\end{equation}
where $\|x\|$ denotes the distance from $x$ to the nearest integer.\\
The chinese remainder theorem allows us to write

\begin{equation}\label{multiply-S}
S(u,p_1p_2;\lambda,\mu,\nu)=S_1(p_1;b,c,d)S_1(p_2;b,c,d)\prod_{r^f\|u}S_2(r^f;b,c,d), 
\end{equation}
where $u=\prod r^f$ is the prime factorization of $u$, and $b,c,d$ are some integers such that

$$
\begin{cases}
(b,up_1p_2)=(\lambda,up_1p_2),\\
(c,up_1p_2)=(\mu,up_1p_2),\\
(d,up_1p_2)=(\nu,up_1p_2)
\end{cases}
$$
and

$$
\begin{cases}
S_1(p,q,m_2;b,c,d)=\displaystyle\sum_{\alpha,\beta,\gamma=1}^{p}\left(\dfrac{m_2\alpha^2\beta-q\gamma}{p}\right)e\left(\dfrac{b\alpha+c\beta+d\gamma}{p}\right)\\
S_2(r^f,q,m_2;b,c,d)=\displaystyle\sum_{\substack{\alpha,\beta,\gamma=1\\ r^f\mid m_2\alpha^2\beta-q\gamma}}^{r^f}e\left(\dfrac{b\alpha+c\beta+d\gamma}{r^f}\right).
\end{cases}
$$
For these sums we have the following bounds whose proofs are elementary and we postpone until section \ref{app} (see Lemma \ref{expsums}).

$$
\begin{cases}
S_1(p,q,m_2;b,c,0)= 0,\\
S_1(p,q,m_2;b,c,d)\ll p^{3/2},\\
\vert S_2(r,q,m_2;b,c,d)\vert\leq 2r(r,b,c,dm_2)\\
\vert S_2(r^f,q,m_2;b,c,d)\vert\leq 2r^{\frac{3f}{2}}(r^f,b,c,dm_2)\text{, if $r$ is odd, $f\geq 2$},\\
\vert S_2(2^f,q,m_2;b,c,d)\vert\leq 4.2^{\frac{3f}{2}}(2^f,b,c,dm_2)\text{, $f\geq 2$}.
\end{cases}
$$
If we multiply these upper bounds in \eqref{multiply-S}, we have, whenever $d\not\equiv 0\!\!\!\pmod{up_1p_2}$,

\begin{align}\label{mult-bound}
S(u,p_1p_2,m_2;b,c,d)&\ll d(u)\widehat{P}^3U^{3/2}(u^{\dag})^{-1/2}(u,b,c,dm_2)\notag\\
&\ll_{m} d(u)\widehat{P}^3U^{3/2}(u^{\dag})^{-1/2}(u,b,c,d),
\end{align}
where 
\begin{equation}\label{udag}
u^{\dag}=\prod_{\substack{p\mid u\\ p^2\nmid u}}p. 
\end{equation}
and
$$
S(u,p_1p_2,m_2;b,c,0)=0.
$$
Hence, by \eqref{S=exp}, we deduce the inequality

\begin{equation}\label{S<<ThPhPs}
S_u\ll_{m} d(u)\widehat{P}^{-3}U^{-3/2}(u^{\dag})^{-1/2}\sum_{\substack{\lambda,\mu,\nu\!\!\!\!\pmod{up_1p_2}\\\nu\not\equiv 0\!\!\!\!\pmod{up_1p_2}}}|\Theta_{\lambda}\Phi_{\mu}\Psi_{\nu}|(u,\lambda,\mu,\nu).
\end{equation}

Now, we separate the above triple sum in eight parts accordingly to whether $\lambda$, $\mu$ and $\nu$ are zero or not. We also use the bounds coming from \eqref{BoundsPhimu}. We use the first term inside the $\min$-symbol in the zero case and the second one for the others. We then have

\begin{multline}\label{firstbigsum}
\sum_{\lambda,\mu,\nu\!\!\!\!\pmod{up_1p_2}}|\Theta_{\lambda}\Phi_{\mu}\Psi_{\nu}|(u,\lambda,\mu,\nu)\ll K.XK^{-2}.U\widehat{P}^2\sum_{1\leq\nu\leq\frac{up_1p_2}{2}}\nu^{-1}(u,\nu)+K(U\widehat{P}^2)^2\sum_{1\leq\mu,\nu\leq\frac{up_1p_2}{2}}\mu^{-1}\nu^{-1}(u,\mu,\nu)\\
+XK^{-2}(U\widehat{P}^2)^2\sum_{1\leq\lambda,\nu\leq\frac{up_1p_2}{2}}\lambda^{-1}\nu^{-1}(u,\lambda,\nu)+(U\widehat{P}^2)^3\sum_{1\leq\lambda,\mu,\nu\leq\frac{up_1p_2}{2}}\lambda^{-1}\mu^{-1}\nu^{-1}(u,\lambda,\mu,\nu).
\end{multline}
For the sums involving greatest common divisors, we have the following Lemma

\begin{lem}
Let $Z\geq 1$ and $u$ an integer such that $u\geq 1$, then we have the following inequalities

\begin{equation}\label{Z1}
\sum_{1\leq \lambda\leq Z}\lambda^{-1}(u,\lambda)\leq d(u)\left(\log (2Z)\right),
\end{equation}
\begin{equation}\label{Z2}
\sum_{1\leq \lambda,\mu\leq Z}\lambda^{-1}\mu^{-1}(u,\lambda,\mu)\leq d(u)\left(\log (2Z)\right)^2,
\end{equation}
\begin{equation}\label{Z3}
\sum_{1\leq \lambda,\mu,\nu\leq Z}\lambda^{-1}\mu^{-1}\nu^{-1}(u,\lambda,\mu,\nu)\ll \left(\log 2Z)\right)^3.
\end{equation}
\end{lem}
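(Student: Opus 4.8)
The three inequalities all rest on the same device, so the plan is to treat them uniformly by sorting each sum according to the value of the greatest common divisor. Fix $u\geq 1$ and write $g$ for the relevant gcd, noting that in every case $g$ is a divisor of $u$, so $g$ ranges over the $d(u)$ divisors of $u$. The key observation is that the event $(u,\lambda)=g$ forces $g\mid\lambda$, that $(u,\lambda,\mu)=g$ forces $g\mid\lambda$ and $g\mid\mu$, and that $(u,\lambda,\mu,\nu)=g$ forces $g\mid\lambda$, $g\mid\mu$ and $g\mid\nu$. Hence I would replace the weight $(u,\lambda,\dots)$ by its value $g$, pull $g$ out of the inner sum, and relax the exact condition $(u,\dots)=g$ to the plain divisibilities $g\mid\lambda,\dots$, which only enlarges the sum.

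Carrying this out for \eqref{Z1}, one writes
\[
\sum_{1\leq\lambda\leq Z}\lambda^{-1}(u,\lambda)
=\sum_{g\mid u} g\sum_{\substack{1\leq\lambda\leq Z\\ (u,\lambda)=g}}\lambda^{-1}
\leq\sum_{g\mid u} g\sum_{\substack{1\leq\lambda\leq Z\\ g\mid\lambda}}\lambda^{-1}
=\sum_{g\mid u}\;\sum_{1\leq\lambda'\leq Z/g}(\lambda')^{-1},
\]
after the substitution $\lambda=g\lambda'$, the crucial point being that the factor $g$ cancels exactly against the $g^{-1}$ produced by the rescaling. The inner harmonic sum is at most $\log(2Z)$ by the elementary estimate $\sum_{n\leq M}n^{-1}\leq\log(2M)$ for $M\geq1$, and summing over the $d(u)$ divisors gives \eqref{Z1}. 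For \eqref{Z2} the same steps produce a factor $g\cdot g^{-2}=g^{-1}\leq 1$ in front of the square of a harmonic sum, so one gets $\sum_{g\mid u}g^{-1}(\log 2Z)^{2}\leq d(u)(\log 2Z)^{2}$, which is \eqref{Z2}.

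The triple sum \eqref{Z3} is where the extra saving appears and explains the absence of $d(u)$: after the same manipulation the weight becomes $g\cdot g^{-3}=g^{-2}$, multiplying a cube of harmonic sums, so
\[
\sum_{1\leq\lambda,\mu,\nu\leq Z}(\lambda\mu\nu)^{-1}(u,\lambda,\mu,\nu)
\leq(\log 2Z)^{3}\sum_{g\mid u}g^{-2}
\leq(\log 2Z)^{3}\sum_{g\geq 1}g^{-2}
=\frac{\pi^{2}}{6}(\log 2Z)^{3},
\]
so that the implied constant is absolute. There is no genuine obstacle here; the only thing to watch is the bookkeeping of the power of $g$ generated at each rescaling, since it is precisely the surplus decay $g^{-1}$ and $g^{-2}$ in \eqref{Z2} and \eqref{Z3} that controls the divisor count, absorbing it into $d(u)$ for \eqref{Z2} and into the convergent series $\zeta(2)$ for \eqref{Z3}.
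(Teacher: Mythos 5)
Your proof is correct and is essentially the paper's argument: the paper bounds $(u,\lambda,\mu,\nu)\leq\sum_{d\mid u,\,d\mid\lambda,\mu,\nu}d$ and swaps the order of summation, while you partition by the exact value of the gcd and then relax to divisibility, but both routes land on the identical intermediate bound $\sum_{g\mid u}g\sum_{g\mid\lambda,\ldots}(\lambda\cdots)^{-1}$ and the same cancellation of powers of $g$. The only caveat, inherited from the statement itself rather than introduced by you, is that the harmonic-sum bound $\sum_{n\leq M}n^{-1}\leq\log(2M)$ fails for very small $M$, so \eqref{Z1} and \eqref{Z2} really hold with an absolute implied constant rather than with constant $1$; this is immaterial for the application, where $Z=up_1p_2/2$ is large.
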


\begin{proof}
We prove \eqref{Z3}, the other ones are analogous. We have the following

\begin{align*}
\sum_{1\leq \lambda,\mu,\nu\leq Z}\lambda^{-1}\mu^{-1}\nu^{-1}(u,\lambda,\mu,\nu)&\leq \sum_{d\mid u}d\sum_{\substack{1\leq \lambda,\mu,\nu\leq Z\\ d\mid \lambda,\mu,\nu}}\lambda^{-1}\mu^{-1}\nu^{-1}\\
&\leq \sum_{d\mid u}d\sum_{1\leq \lambda',\mu',\nu'\leq \frac{Z}{d}}d^{-3}\lambda'^{-1}\mu'^{-1}\nu'^{-1}\\
&=\sum_{d\mid u}d^{-2}\left(\sum_{1\leq \lambda'\leq \frac{Z}{d}}\lambda'^{-1}\right)\\
&\ll \left(\log (2Z)\right)^3
\end{align*}

\end{proof}

If we insert the inequalities \eqref{Z1}, \eqref{Z2} and \eqref{Z3} with $Z=\frac{up_1p_2}{2}$ in \eqref{firstbigsum} and notice that $\log (up_1p_2)\leq 3\mathcal{L}$, we deduce

\begin{equation}\label{before-udag}
S_u\ll_m d(u)^2(u^{\dag})^{-1/2}\mathcal{L}^3\left\{K^{-1}\widehat{P}^{-1}U^{-1/2}X+K\widehat{P}U^{1/2}+K^{-2}\widehat{P}U^{1/2}X+\widehat{P}^{3}U^{3/2}\right\}.
\end{equation}
in order to compute the contribution of $S$ to $\mathcal{N}[m_1,m_2](J,K,U)$ we must sum over $u$. To do so, we present the following

\begin{lem}\label{sum-udag}
Let $U\geq 1$ be a real number. For each $u$ integer, such that $u\geq 1$, let $u^{\dag}$ be its squarefree part as in \eqref{udag}. Then we have

\begin{equation}\label{eq-sum-udag}
\sum_{u\sim U}d(u)^2(u^{\dag})^{-1/2}\ll U^{1/2}\left(\log (2U)\right)^3,
\end{equation}
where the implied constant is absolute.
\end{lem}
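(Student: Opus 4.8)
The plan is to separate, inside each $u$, the primes dividing $u$ to the first power from those dividing it to higher order. Write uniquely $u=ab$ with $a$ squarefree, $b$ powerful (i.e.\ $p\mid b\Rightarrow p^2\mid b$) and $(a,b)=1$; this factorization is unique. By the very definition \eqref{udag} one has $u^{\dag}=a$, so $(u^{\dag})^{-1/2}=a^{-1/2}$, while multiplicativity of the divisor function gives $d(u)^2=d(a)^2d(b)^2=4^{\omega(a)}d(b)^2$. Dropping the (harmless) coprimality condition on $a$ then reduces the claim to
\begin{equation*}
\sum_{u\sim U}d(u)^2(u^{\dag})^{-1/2}\ll\sum_{\substack{b\text{ powerful}\\ b\leq 2U}}d(b)^2\sum_{a\leq 2U/b}\mu^2(a)4^{\omega(a)}a^{-1/2}.
\end{equation*}

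First I would dispose of the inner sum over the squarefree variable $a$. Using the convolution identity $4^{\omega}=\mathbf{1}\ast(\mu^2\,3^{\omega})$ together with $\sum_{e\leq t}\mu^2(e)3^{\omega(e)}/e\ll(\log 2t)^3$, one gets the elementary count $\sum_{a\leq t}\mu^2(a)4^{\omega(a)}\ll t(\log 2t)^3$; partial summation against $a^{-1/2}$ then yields
\begin{equation*}
\sum_{a\leq Z}\mu^2(a)4^{\omega(a)}a^{-1/2}\ll Z^{1/2}(\log 2Z)^3.
\end{equation*}
Applying this with $Z=2U/b$ converts the target into
\begin{equation*}
U^{1/2}\sum_{\substack{b\text{ powerful}\\ b\leq 2U}}d(b)^2\,b^{-1/2}\bigl(\log(4U/b)\bigr)^3\leq U^{1/2}(\log 4U)^3\sum_{\substack{b\text{ powerful}\\ b\leq 2U}}\frac{d(b)^2}{b^{1/2}}.
\end{equation*}

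The remaining sum over the powerful part $b$ is where I expect the genuine difficulty to lie, and it is the step I would scrutinise most carefully. One would like to bound $\sum_{b\text{ powerful}}d(b)^2b^{-1/2}$ by an absolute constant, which would immediately give the asserted $U^{1/2}(\log 2U)^3$; the exponent $3$ would then come entirely from the squarefree part (the $4^{\omega}$ factor). The obstacle is that this series \emph{diverges}: its terms with $b=p^2$ already contribute $\sum_p d(p^2)^2p^{-1}=\sum_p 9p^{-1}=\infty$, so the powerful part cannot be discarded for free and the truncation $b\leq 2U$ must genuinely be exploited. Parametrising powerful numbers by their squarefree square-part, the dominant contribution comes from $b=s^2$ with $s$ squarefree, for which $\sum_{s\leq\sqrt{2U}}9^{\omega(s)}s^{-1}\asymp(\log U)^9$; indeed restricting $u$ to squares of squarefree numbers already forces $\sum_{u\sim U}d(u)^2(u^{\dag})^{-1/2}\gg U^{1/2}(\log U)^8$. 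This strongly suggests that the honest estimate carries more logarithmic factors than the stated $(\log 2U)^3$. The crux is therefore either to locate a cancellation in the powerful sum that I do not presently see, or to accept a larger power of $\log U$ which—since the error term in Theorem \ref{intermediate} already absorbs $(\log X)^{15}$—would leave the final result intact.
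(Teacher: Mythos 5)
Your suspicion in the final paragraph is correct, and you have in fact uncovered a genuine error in the paper: the lemma as stated is false, and your lower bound is the right way to see it. For $u=s^{2}$ with $s$ squarefree one has $u^{\dag}=1$ and $d(u)^{2}=9^{\omega(s)}$, and since $\sum_{s\leq T}\mu^{2}(s)9^{\omega(s)}\asymp T(\log T)^{8}$, these $u$ alone contribute $\gg U^{1/2}(\log U)^{8}$ to the left-hand side of \eqref{eq-sum-udag}, which is incompatible with the claimed $U^{1/2}(\log 2U)^{3}$. The paper's own proof passes to $F(s)=\sum_{n}d(n)^{2}(n^{\dag})^{-1/2}n^{1/2}n^{-s}$ and asserts that $F(s)\zeta(s)^{-4}$ is holomorphic in a half-plane $\operatorname{Re}(s)>\sigma$ with $\sigma<1$; this is exactly where it goes wrong. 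The Euler factor of $F$ at $p$ is $1+4p^{-s}+9p^{1-2s}+O(p^{3/2-3s})$, and the term $9p^{1-2s}$ --- the contribution of $n=p^{2}$, for which $n^{\dag}=1$ --- produces a factor behaving like $\zeta(2s-1)^{9}$, whose pole of order $9$ at $s=1$ is not removed by dividing by $\zeta(s)^{4}$. The singularity at $s=1$ therefore has total order $4+9=13$, and the correct (and sharp) statement is $\sum_{u\sim U}d(u)^{2}(u^{\dag})^{-1/2}\ll U^{1/2}(\log 2U)^{12}$.

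Your decomposition $u=ab$ with $a=u^{\dag}$ squarefree, $b$ powerful and $(a,b)=1$ proves precisely this corrected bound: the treatment of the $a$-sum is fine and supplies the $(\log 2Z)^{3}$, and the missing ingredient is $\sum_{b\leq 2U,\ b\ \mathrm{powerful}}d(b)^{2}b^{-1/2}\ll(\log 2U)^{9}$. This follows by Rankin's trick with $s=1/\log(2U)$, since the Dirichlet series $\prod_{p}\bigl(1+9p^{-1-2s}+16p^{-3/2-3s}+\cdots\bigr)$ over powerful numbers behaves like $\zeta(1+2s)^{9}$ as $s\to0^{+}$; the dominant $b$ are exactly the squares of squarefree numbers you exhibited, so no hidden cancellation is available and the exponent $12=3+9$ cannot be lowered. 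As for the damage downstream: the lemma is used only to sum \eqref{before-udag} over $u\sim U$, so \eqref{T2} acquires $\mathcal{L}^{15}$ in place of $\mathcal{L}^{6}$, and the error terms of Theorems \ref{intermediate}, \ref{main[m]} and \ref{main} end with $(\log X)^{24}$ instead of $(\log X)^{15}$. Since the Corollary and all stated ranges of $q$ depend only on the exponents of $X$ and $q$, every qualitative conclusion of the paper survives this correction.
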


\begin{proof}
It is sufficient to prove that

$$\sum_{u\leq U}d(u)^2(u^{\dag})^{-1/2}u^{1/2}\ll U(\log U)^3.$$
Considering the Dirichlet series

$$F(s)=\sum_{n\geq 1}d(n)^2(n^{\dag})^{-1/2}n^{1/2}n^{-s}$$
and expressing it in an Euler product, we see that $F(s)\zeta^{-4}(s)$ is holomorphic in the half plane

$$\operatorname{Re}(s)>\sigma\text{, for some }\sigma<1.$$
Hence, $F(s)$ has a pole of order at most 4 at $s=1$. this gives the result.
\end{proof}

Gathering \eqref{sum-T_2<<S}, \eqref{before-udag}, and summing over $u$ by Lemma \ref{sum-udag}, we obtain

\begin{equation*}
\sum_{u\sim U}T_2(u)\ll_m \left\{K^{-1}\widehat{P}^{-1}X+K\widehat{P}U+K^{-2}\widehat{P}UX+\widehat{P}^3U^2\right\}\mathcal{L}^6
\end{equation*}
Appealing to \eqref{U<<}, we have the inequality

\begin{equation}\label{T2}
\sum_{u\sim U}T_2(u)\ll_m \left\{K^{-1}\widehat{P}^{-1}X + J^{-2}K\widehat{P}X  + J^{-2}K^{-2}\widehat{P} X^2 + J^{-4}\widehat{P}^3X^2\right\}{\mathcal{L}}^6.
\end{equation}
From \eqref{T1+T2}, \eqref{T1} and \eqref{T2}, we deduce an inequality for $\mathcal{N}[m_1,m_2](J,K,U)$

\begin{align*}
\mathcal{N}[m_1,m_2](J,K,U)\ll_{\eta,m} &\left\{X^{3/2+\eta}q^{1} + K^{-1}\widehat{P}^{-1}X^{2}q^{-1}+ K^{-1}\widehat{P}^{-1}X  \right.\notag\\
&+ J^{-2}K\widehat{P}X\left. + J^{-2}K^{-2}\widehat{P} X^2 + J^{-4}\widehat{P}^3X^2\right\}{\mathcal{L}}^6.
\end{align*}
Since $\frac{X}{q}\geq 1$, the third term above can be seen to be

$$\leq K^{-1}\widehat{P}^{-1}X^2q^{-1}.$$
And therefore,

\begin{align}\label{last-m1m2}
\mathcal{N}[m_1,m_2](J,K,U)\ll_{\eta,m} &\left\{X^{3/2+\eta}q^{1} + K^{-1}\widehat{P}^{-1}X^{2}q^{-1} + J^{-2}K\widehat{P}X\right.\notag\\
&\left. + J^{-2}K^{-2}\widehat{P} X^2 + J^{-4}\widehat{P}^3X^2\right\}{\mathcal{L}}^6.
\end{align}

\begin{rmk}
We recall the importance of working with the general $\mathcal{N}[m_1,m_2](J,K)$ instead of $\mathcal{N}[m](J,K)$. The reason is that because of the equalities
$$
\mathcal{N}[m](J,K)=\mathcal{N}[m,1](J,K)=\mathcal{N}[1,m](K,J),
$$
the upper bound \eqref{last-m1m2} still holds if we replace the roles of $J$ and $K$. in other words, there is no loss of generality in supposing $J\geq K$.
\end{rmk}

Now, the upper bound \eqref{last-m1m2}, together with \eqref{S''<Nlog} and \eqref{N<NU} gives us

\begin{align}\label{Sd''again}
S_{>y}[m](X,q)\ll_{\eta,m} &\left\{X^{3/2+\eta}q^{1} + K^{-1}\widehat{P}^{-1}X^{2}q^{-1} + J^{-2}K\widehat{P}X\right.\notag\\
&\left. + J^{-2}K^{-2}\widehat{P} X^2 + J^{-4}\widehat{P}^3X^2\right\}{\mathcal{L}}^9.
\end{align}
At this point we make the choice

\begin{equation}\label{def-phat}
\widehat{P}=JK^{-1/4}q^{-1/4} + \mathcal{L}^2, 
\end{equation}
which makes the second and the last terms of \eqref{Sd''again} similar. Hence we have

\begin{align*}
S_{>y}[m](X,q)\ll_{\eta} &\left\{X^{3/2+\eta}q^{-1} + J^{-1}K^{-3/4}X^2q^{-3/4} + J^{-1}K^{3/4}Xq^{-1/4}\right.\\
&+\left. J^{-1}K^{-9/4}X^2q^{-1/4} +J^{-2}KX + J^{-2}K^{-2}X^2 + J^{-4}X^2\right\}{\mathcal{L}}^{15}.
\end{align*}
Since $J\geq K$, $JK\geq y$, we have

\begin{align}\label{Sd''last}
S_{>y}[m](X,q)\ll_{\eta} &\left\{X^{3/2+\eta}q^{-1} + X^2q^{-3/4}y^{-7/8} + Xq^{-1/4}y^{-1/8}\right.\notag\\
&+\left.J^{-1}K^{-9/4}X^2q^{-1/4} + Xy^{-1/2}+ X^2y^{-2} \right\}{\mathcal{L}}^{15}.
\end{align}

\section{Proof of Theorem \ref{intermediate}}\label{proof-of-int}

We assume
\begin{equation}\label{y>sqrtX}
y\geq X^{\delta}\text{, for some }\delta >1/2, 
\end{equation}
and we choose $\eta$ sufficiently small so that we have
$$
\eta<\delta-\dfrac{1}{2}.
$$
We go back to \eqref{S'+S''}. Since we have studied $S_{\leq y}[m](X,q)$ (see \eqref{S'}) and $S_{< y}[m](X,q)$ (see \eqref{Sd''last}), we deduce

\begin{equation}\label{R}
S[m](X,q)=\mathcal{A}[m](X,q)+R[m](X,q)
\end{equation}

where $R[m](x,q)$ satisfies
\begin{align*}
R[m](X,q)\ll_m  &\left\{d(q)Xyq^{-1}+X^2y^{-7/8}q^{-3/4}+Xy^{-1/8}q^{-1/4}\right.\\
&+ \left.  J^{-1}K^{-9/4}X^2q^{-1/4}+Xy^{-1/2} + X^2y^{-2}\right\}{\mathcal{L}}^{15},\\
\end{align*}
and the missing term can be seen to be $\ll d(q)Xyq^{-1}\mathcal{L}^{15}$ thanks to \eqref{y>sqrtX}.
We now make the choice

\begin{equation}\label{def-y}
y=X^{8/15}q^{2/15} 
\end{equation}
to make the two first terms similar.\\
Notice that this choice of $y$ implies that \eqref{y>sqrtX} is satisfied. Hence we have

\begin{align}\label{beforeaux}
R[m](X,q)\ll_m & \left\{d(q)X^{23/15}q^{-13/15} + X^{14/15}q^{-4/15} + J^{-1}K^{-9/4}X^2q^{-1/4}\right.\notag\\
&+ \left.X^{11/15}q^{-1/15} + X^{14/15}q^{-4/15}\right\}\mathcal{L}^{15}\notag\\
\ll & \left\{d(q)X^{23/15}q^{-13/15} + \mathcal{E}_1\right\}\mathcal{L}^{15},
\end{align}
where

$$\mathcal{E}_1=X^2J^{-1}K^{-9/4}q^{-1/4}$$
and the missing terms are $\leq d(q)X^{23/15}q^{-13/15}$, since $q\leq X$.\\

At this point, we recall the auxiliary bound from lemma \ref{lem-aux}. The inequality \eqref{aux}, together with \eqref{S'+S''}, \eqref{S'}, \eqref{S''<Nlog} and, of course, the choice $y=X^{8/15}q^{2/15}$ give us

\begin{align}\label{auxfin}
\!\!\!\!R[m](X,q)\ll_m &\left\{d(q)Xyq^{-1} + X^2J^{-1}K^{-1}q^{-1} + X^2J^{-2}Kq^{-1}\right\}{\mathcal{L}}^3\notag\\
\ll &\left\{d(q)Xyq^{-1} + X^2y^{-1}q^{-1} + \mathcal{E}_2\right\}{\mathcal{L}}^{3}\notag\\
\ll &\left\{d(q)X^{23/15}q^{-13/15} + \mathcal{E}_2\right\}\mathcal{L}^{3},
\end{align}
where, again the missing term can be seen to be $\ll d(q)Xyq^{-1}{\mathcal{L}}^{3}$ thanks to \eqref{y>sqrtX} and

$$\mathcal{E}_2=X^2J^{-2}Kq^{-1}.$$
The importance of the auxiliary bound \eqref{aux} is that since $J\geq K$, we cannot give a good estimate for the term $\mathcal{E}_1$ using that $JK\geq y$. So we look for a mixed term that lies between $\mathcal{E}_1$ and $\mathcal{E}_2$ for which we have a good bound (much better than the one for $\mathcal{E}_2$, for example).\\
Notice that
\begin{align}\label{minpowers}
\min(\mathcal{E}_1,\mathcal{E}_2)\leq& \mathcal{E}_1^{12/17}\mathcal{E}_2^{5/17}\notag\\
=&X^2(JK)^{-22/17}q^{-8/17}\notag\\
\ll& X^2y^{-22/17}q^{-8/17}\notag\\
\ll& X^{334/255}q^{-164/255}.
\end{align}
Now, \eqref{beforeaux}, \eqref{auxfin}, \eqref{minpowers} imply

\begin{align}\label{R-end}
R[m](X,q)\ll_m &\left\{d(q)X^{23/15}q^{-13/15}+ X^{334/255}q^{-164/255}\right\}{\mathcal{L}}^{15}\notag\\
\ll &d(q)X^{23/15}q^{-13/15}{\mathcal{L}}^{15},
\end{align}
where the missing term is $\leq d(q)X^{23/15}q^{-13/15}$ since $q\leq X$.

The formula \eqref{intereq} now follows from \eqref{mainterm-eq} and the bound \eqref{R-end} for $R[m](X,q)$ above.\\
The proof of Theorem \ref{intermediate} is complete.

\section{Proof of Theorem \ref{main}}\label{proof-of-main}

In this section we prove \eqref{maineq} as a corollary of theorem \ref{intermediate}.\\
If we take a look at \eqref{developV}, we see that the only term we still need to estimate is

$$\sum_{\substack{n\leq X\\ (n,q)=1}}\mu^2(n).$$
The following is a very simple case of \cite[Lemma 3.1.]{Blomer}.

\begin{lem}\label{lem-bl}
Let $\epsilon>0$ be given. We then have the equality

\begin{equation}\label{lemmabl}
\sum_{\substack{n\leq X\\ (n,q)=1}}\mu^2(n)=\dfrac{\varphi(q)}{q}\dfrac{6}{\pi^2}\prod_{p\mid q}\left(1-\frac{1}{p^2}\right)^{-1}X+O\left(X^{1/2+\epsilon}\right),
\end{equation}
uniformly for $1\leq q\leq X$, where the $O$-constant depends on $\epsilon$ alone.
\end{lem}

We now prove Theorem \ref{main} by substituting equation \eqref{lemmabl} in \eqref{developV}. We obtain the equality

\begin{align*}
\mathcal{M}_2[m](X,q)=&2S[m](X,q)-\varphi(q)\left(C(q)\dfrac{X}{q}\right)^2+O(X^{3/2+\epsilon}q^{-1}).
\end{align*}
Now, we use \eqref{intereq} for the term $S[m](X,q)$ and the main term disappears. So we deduce

\begin{multline*}
\mathcal{M}_2[m](X,q)=\frac{C}{2}\Gamma_{\operatorname{an}}(m)\Gamma_{\operatorname{ar}}(m)\prod_{p\mid q}\bigg(1+2p^{-1}\bigg)^{-1}X^{1/2}q^{1/2}\\
+O_{\epsilon,m}(d(q)X^{1/3}q^{2/3}+d(q)X^{23/15}q^{-13/15}{\mathcal{L}}^{15}+X^{3/2+\epsilon}q^{-1}).
\end{multline*}
By comparing the error terms, if we choose $\epsilon$ small enough($\epsilon<1/30$ suffices), then the last error term is smaller than the second one. This concludes the proof of \eqref{maineq}.

${}$\\

\section{Appendix}\label{app}

\subsection{Sums involving the divisor function}\label{sumd}

In this appendix we analyze a certain sum involving the divisor function which we used in the corresponding proof. For that purpose, we use the following classical Lemma that can be found in \cite{Shiu}, for instance.

\begin{lem}\label{shiu}
Let $a,q\in\mathbb{Z}$, $q>0$ such that $(a,q)=1$, let $X>0$ and $\eta>0$ be given. Then

$$\sum_{\substack{n\leq X\\ n\equiv a\!\!\!\pmod{q}}}d(n)\ll_{\eta} \dfrac{\varphi(q)}{q^2}X(\log 2X),$$
uniformly for $q\leq X^{1-\eta}$. The implied constant in $\ll_{\eta}$ depending on $\eta$ alone.
\end{lem}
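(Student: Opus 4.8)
The plan is to deduce this directly from Shiu's general Brun--Titchmarsh theorem for multiplicative functions \cite{Shiu}, applied to $f=d$. First I would verify that the divisor function satisfies the hypotheses of that theorem: it is non-negative and multiplicative, one has $d(p^k)=k+1\leq 2^k$, and $d(n)\ll_\epsilon n^\epsilon$, so the usual growth conditions hold. Shiu's theorem then yields, uniformly for $(a,q)=1$ and $q\leq X^{1-\eta}$, a bound of the shape
\begin{equation*}
\sum_{\substack{n\leq X\\ n\equiv a\!\!\!\pmod q}}d(n)\ll_{\eta}\frac{X}{\varphi(q)}\frac{1}{\log 2X}\exp\Bigg(\sum_{\substack{p\leq X\\ p\nmid q}}\frac{d(p)}{p}\Bigg).
\end{equation*}
If the form cited is stated for short intervals, passing to the full sum $n\leq X$ is routine by a dyadic decomposition over the ranges $n\in(X2^{-j-1},X2^{-j}]$ and summation of the resulting geometric series, the very small values $n\leq X^{\eta/2}$ contributing a negligible amount that is discarded trivially.

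The key computational step is to simplify the exponential factor, and this is where the precise shape $\varphi(q)/q^2$ emerges. Since $d(p)=2$, I would split
\begin{equation*}
\exp\Bigg(\sum_{\substack{p\leq X\\ p\nmid q}}\frac{2}{p}\Bigg)=\exp\Bigg(\sum_{p\leq X}\frac{2}{p}\Bigg)\exp\Bigg(-\sum_{p\mid q}\frac{2}{p}\Bigg),
\end{equation*}
and invoke Mertens' theorem, $\sum_{p\leq X}p^{-1}=\log\log X+O(1)$, to see that the first factor is $\asymp(\log 2X)^2$. For the second factor one has $e^{-2/p}\asymp(1-p^{-1})^2$ uniformly in $p$ (the ratio has a convergent logarithm $\sum_p O(p^{-2})$), so that $\exp(-\sum_{p\mid q}2/p)\asymp\prod_{p\mid q}(1-p^{-1})^2=(\varphi(q)/q)^2$. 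Multiplying and inserting into the displayed bound gives
\begin{equation*}
\frac{X}{\varphi(q)}\frac{1}{\log 2X}\cdot(\log 2X)^2\Big(\frac{\varphi(q)}{q}\Big)^2\asymp\frac{\varphi(q)}{q^2}X\log 2X,
\end{equation*}
which is exactly the asserted estimate.

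I do not expect a genuine obstacle, since the result is essentially a specialization of a known theorem; the only point requiring care is the bookkeeping in the exponent, namely checking that excluding the primes $p\mid q$ from Shiu's sum is precisely what upgrades the generic factor $1/\varphi(q)$ to the sharper $\varphi(q)/q^2$. As a sanity check one may note that for $q$ prime both quantities agree, being $\asymp X(\log 2X)/q$, whereas for highly composite $q$ the factor $(\varphi(q)/q)^2$ produces a genuine improvement, in agreement with the heuristic main term $\frac{X}{q}\sum_{d\leq X,(d,q)=1}d^{-1}\asymp\frac{\varphi(q)}{q^2}X\log 2X$ coming from the hyperbola method. I would remark that this hyperbola computation, while transparent, only controls the error terms for $q$ up to about $X^{1/2}$, so the appeal to Shiu is what secures uniformity throughout $q\leq X^{1-\eta}$.
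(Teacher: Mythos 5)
Your derivation is correct and matches the paper's treatment: the paper states this lemma without proof, as a classical consequence of Shiu's Brun--Titchmarsh theorem for multiplicative functions, and your argument is exactly the specialization to $f=d$ that justifies that citation. The bookkeeping of the exponential factor --- Mertens giving $\exp\bigl(\sum_{p\leq X}2/p\bigr)\asymp(\log 2X)^2$ together with $\exp\bigl(-\sum_{p\mid q}2/p\bigr)\asymp\prod_{p\mid q}(1-p^{-1})^2$ --- correctly produces the stated bound $\varphi(q)q^{-2}X\log 2X$.
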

We need to estimate the sum

$$\sum_{k, v, \ell}d(k^2v-\ell q),$$
where the conditions on the sum are as in \eqref{cond-u}.\\
For that we have the more general result

\begin{lem}\label{sumdivfunc}
Let $q\geq 1$ be an integer and let $\eta>0$ be given. Then uniformly for $K, S, X\geq 1$ real numbers such that  $S\leq XK^{-2}$, we have

$$\sum_{k\sim K}\sum_{\ell\leq \frac{X}{q}}\sum_{\substack{v\leq S\\ k^2v-\ell q\geq 1}}d(k^2v-\ell q)\ll_{\eta} \dfrac{X}{q}\left(X^{1/2+\eta}+XK^{-1}\mathcal{L}^3\right),$$
where the implied constant depends only on $\eta$.
\end{lem}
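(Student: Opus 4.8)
The plan is to dispose of the innermost sum over $v$ by means of the Shiu-type estimate of Lemma \ref{shiu}, after a short gcd manipulation, and to treat the two ranges $K\leq X^{1/2-\eta}$ and $K>X^{1/2-\eta}$ separately: the first produces the term $XK^{-1}\mathcal{L}^3$ and the second the term $X^{1/2+\eta}$. Throughout one may assume $q\leq X$ (otherwise the range $\ell\leq X/q$ contains no positive integer and the bound is trivial), and one records that, because of $k\sim K$ and $v\leq S\leq XK^{-2}$, every argument satisfies $1\leq k^2v-\ell q\leq k^2v\leq 4X$.

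First I would fix $k\sim K$ and $\ell$ and set $D=(k^2,\ell q)$, $A=k^2/D$, $B=\ell q/D$, so that $(A,B)=1$ and $k^2v-\ell q=D(Av-B)$. As $v$ runs over the admissible range, the integers $m=Av-B$ lie in the residue class $-B\pmod{A}$, which is coprime to $A$, and satisfy $1\leq m\leq AS\leq 4X/D$. Hence, using $d(Dm)\leq d(D)d(m)$ and enlarging to the full progression,
$$\sum_{\substack{v\leq S\\ k^2v-\ell q\geq 1}}d(k^2v-\ell q)\leq d(D)\sum_{\substack{1\leq m\leq 4X/D\\ m\equiv -B\!\!\!\pmod{A}}}d(m).$$
In the regime $K\leq X^{1/2-\eta}$ one has $A\leq k^2\leq 4X^{1-2\eta}\leq (4X/D)^{1-\eta}$ for $X$ large, so Lemma \ref{shiu} applies with modulus $A$ and reduced residue $-B$, giving the inner sum $\ll_\eta \frac{\varphi(A)}{A^2}\frac{X}{D}\mathcal{L}\ll \frac{X}{k^2}\mathcal{L}$; the factor $D$ cancels against $1/A=D/k^2$, which is the crucial point.

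Summing over $\ell\leq X/q$ and $k\sim K$, and bounding $d((k^2,\ell q))\leq d(k^2)$, I would obtain
$$\frac{X}{K^2}\mathcal{L}\sum_{k\sim K}d(k^2)\sum_{\ell\leq X/q}1\ll\frac{X^2}{K^2q}\mathcal{L}\sum_{k\sim K}d(k^2)\ll\frac{X^2}{Kq}\mathcal{L}^3,$$
where I used the classical estimate $\sum_{k\sim K}d(k^2)\ll K\mathcal{L}^2$. This is the term $\tfrac{X}{q}XK^{-1}\mathcal{L}^3$. In the complementary range $K>X^{1/2-\eta}$ I would simply invoke $d(n)\ll_\eta X^{\eta}$ (valid since $n\leq 4X$) together with the trivial count of triples $\ll K\cdot S\cdot X/q\leq X^2K^{-1}q^{-1}\leq X^{3/2+\eta}q^{-1}$, which yields a contribution $\ll_\eta X^{3/2+2\eta}q^{-1}$, i.e.\ the term $\tfrac{X}{q}X^{1/2+\eta}$ after replacing $\eta$ by $\eta/2$. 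Adding the two cases gives the claim.

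The main obstacle is the coprimality requirement in Lemma \ref{shiu}: the residue class of $k^2v-\ell q$ modulo $k^2$ is \emph{not} reduced, and the entire role of the gcd reduction above is to replace it by the reduced class $-B\pmod{A}$ while simultaneously shrinking the range from $4X$ to $4X/D$, so that the spurious factor $D$ disappears and only $d(D)=d((k^2,\ell q))$ remains. The one remaining delicate point is that this divisor factor must be summed over $\ell$ and $k$ at a cost of no more than $\mathcal{L}^3$; the crude bound $d((k^2,\ell q))\leq d(k^2)$ together with $\sum_{k\sim K}d(k^2)\ll K\mathcal{L}^2$ is exactly what keeps it in check, and the only calculation needing real care is verifying that Shiu's hypothesis $A\leq(\text{range})^{1-\eta}$ indeed survives the division by $D$.
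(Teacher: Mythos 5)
Your proposal is correct and follows essentially the same route as the paper: the gcd reduction $D=(k^2,\ell q)$ to obtain a reduced residue class before applying Shiu's bound, the case split at $K\approx X^{1/2-\eta}$, and the bounds $d(D)\leq d(k^2)$ with $\sum_{k\sim K}d(k^2)\ll K\mathcal{L}^2$ all match the paper's argument. The only differences are cosmetic (the exact cut-off for $K$ and the bookkeeping of $\eta$ versus $\eta/2$).
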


\begin{proof}

Let $f=(k^2,\ell q)$. We write $k^2=fg$, $\ell q=fh$.\\

Let $\eta>0$. We divide in two cases
\begin{enumerate}[i)]
\item If $K\leq \dfrac{1}{2}X^{1/2-\eta/2}$, then $k^2\leq X^{1-\eta}$ and also $\frac{k^2}{f}\leq \frac{X^{1-\eta}}{f}\leq\left(\frac{4X}{f}\right)^{1-\eta}$. So by Lemma \ref{sumdivfunc} above,

\begin{align*}
\sum_{\substack{v\leq S\\ k^2v-\ell q\geq 1}}d(k^2v-\ell q)&\leq \sum_{v\leq S}d(f)d(gv-h)\\
&\ll d(f)\sum_{\substack{n\leq \frac{k^2S}{f}\\ n\equiv -h\!\!\!\pmod{g}}}d(n)\\
&\ll d(f)\sum_{\substack{n\leq \frac{4X}{f}\\ n\equiv -h\!\!\!\pmod{g}}}d(n)\\
&\ll_{\eta} d(f)\varphi(g)\frac{X}{fg^2}\mathcal{L}. 
\end{align*}
So, since $\varphi(g)\leq g$, $f\mid k^2$ and $fg=k^2$, we obtain

\begin{equation}\label{A1}
\sum_{\substack{v\leq S\\ k^2v-\ell q\geq 1}}d(k^2v-\ell q)\ll_{\eta} d(k^2)XK^{-2}\mathcal{L}.
\end{equation}

\item If $K>\frac{1}{2}X^{1/2-\eta/2}$, we have that $X/K \leq 2X^{1/2+\eta/2}$.\\
Then, by the classical bound $d(n)\ll_{\eta} n^{\eta}$, we obtain

\begin{align}\label{A2}
\sum_{\substack{v\leq S\\ k^2v-\ell q\geq 1}}d(k^2v-\ell q)&\ll_{\eta}S(k^2v-\ell q)^{\eta/2}\notag\\
&\ll \dfrac{X}{K^2}X^{\eta/2}\notag\\
&\ll X^{1/2+\eta}K^{-1}.
\end{align}
\end{enumerate}
Now, if we put the two cases together, \eqref{A1} and \eqref{A2} lead to

$$\displaystyle\sum_{\substack{v\leq S\\ k^2v-\ell q\geq 1}}d(k^2v-\ell q)\ll_{\eta} X^{1/2+\eta}K^{-1} + d(k^2)XK^{-2}\mathcal{L}.$$
After summation over $k$ and $\ell$, we finally obtain

\begin{align*}
\sum_{k, \ell, v}d(k^2v-\ell q)&\ll_{\eta} \sum_{k\sim K}\sum_{\ell\leq \frac{X}{q}}\left(X^{1/2+\eta}K^{-1}+d(k^2)XK^{-2}\mathcal{L}\right)\\
&\ll_{\eta} \frac{X}{q}\left(X^{1/2+\eta}+XK^{-1}\mathcal{L}^3\right).
\end{align*}

\end{proof}

\subsection{Exponential Sums}

Here we give the proof for the bounds on exponential sums. We have the following

\begin{lem}\label{expsums}
Let $p,q,m_2,b,c,d$ be integers, $m_2\neq 0$, $p$ a prime number and $p\nmid m_2q$. We define

$$S_1(p,q,m_2;b,c,d)=\sum_{\alpha,\beta,\gamma=1}^{p}\left(\dfrac{m_2\alpha^2\beta-q\gamma}{p}\right)e\left(\dfrac{b\alpha+c\beta+d\gamma}{p}\right).$$
Let $r$ be a prime number such that $r\nmid q$ and $f>0$ an integer,

$$S_2(r^f,q,m_2;b,c,d)=\sum_{\substack{\alpha,\beta,\gamma=1\\ r^f\mid m_2\alpha^2\beta-q\gamma}}^{r^f}e\left(\dfrac{b\alpha+c\beta+d\gamma}{r^f}\right).$$
Then, we have

\begin{numcases}{}
S_1(p,q,m_2;b,c,0)=0,\label{zero}\\
S_1(p,q,m_2;b,c,d)\ll p^{3/2},\label{A}\\
|S_2(r,q,m_2;b,c,d)|\leq 2r(r,b,c,dm_2)\label{B},\\
|S_2(r^f,q,m_2;b,c,d)|\leq 2r^{\frac{3f}{2}}(r^f,b,c,dm_2)^{\frac{1}{2}}\text{ if $r$ is odd, $f\geq 2$}\label{C},\\
|S_2(2^f,q,m_2;b,c,d)|\leq 4.2^{\frac{3f}{2}}(2^f,b,c,dm_2)^{\frac{1}{2}},\text{ $f\geq 2$}\label{D}.
\end{numcases}

\end{lem}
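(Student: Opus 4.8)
The plan is to evaluate each of these sums by first carrying out the summation over whichever variable enters linearly, thereby reducing every sum to a one-dimensional quadratic sum that can be controlled by elementary means. For $S_1$ I would sum over $\gamma$ first. Since $p\nmid q$, as $\gamma$ runs through $\Z/p\Z$ the quantity $m_2\alpha^2\beta-q\gamma$ runs through a complete residue system, so $\sum_{\gamma}\left(\frac{m_2\alpha^2\beta-q\gamma}{p}\right)=0$; this gives \eqref{zero} at once. For \eqref{A} we may assume $p\nmid d$ (otherwise \eqref{zero} applies). Substituting $n=m_2\alpha^2\beta-q\gamma$ and using the standard evaluation of the quadratic Gauss sum $\sum_{n}\left(\frac{n}{p}\right)e(tn/p)=\left(\frac{t}{p}\right)g$ with $|g|=p^{1/2}$, the inner sum over $\gamma$ becomes a unimodular phase $e(d\overline{q}\,m_2\alpha^2\beta/p)$, up to a Legendre symbol, times $p^{1/2}$. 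Writing $D=d\overline{q}\,m_2$, it then remains to bound $\sum_{\alpha,\beta}e((D\alpha^2\beta+b\alpha+c\beta)/p)$; summing over $\beta$ forces $D\alpha^2+c\equiv0\pmod p$, a congruence with at most two roots $\alpha$, so the double sum is $\ll p$ and \eqref{A} follows with total size $p^{1/2}\cdot p=p^{3/2}$.

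For the sums $S_2$ I would detect the divisibility condition with additive characters, $\mathbf{1}[r^f\mid N]=r^{-f}\sum_{\lambda\bmod r^f}e(\lambda N/r^f)$. Summing the resulting expression over $\gamma$ collapses the $\lambda$-sum to the single term $\lambda\equiv d\overline{q}$ (using $r\nmid q$), and a further summation over $\beta$ reduces everything to
\[
S_2(r^f)=r^f\sum_{\substack{\alpha\bmod r^f\\ D\alpha^2+c\equiv 0\ (r^f)}}e\!\left(\frac{b\alpha}{r^f}\right)=:r^fT,\qquad D=d\overline{q}\,m_2.
\]
Since $\overline{q}$ is a unit, $(r^f,b,c,dm_2)=(r^f,b,c,D)$, so it suffices to prove $|T|\le 2(r,b,c,D)$ for \eqref{B} and $|T|\le 2r^{f/2}(r^f,b,c,D)^{1/2}$ for \eqref{C}. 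The case $f=1$ is immediate: either $r\nmid D$, giving at most two roots and $|T|\le 2$, or $r\mid D$, in which case $T$ vanishes unless $r\mid c$ and $r\mid b$, where $T=r=(r,b,c,D)$.

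The heart of the matter, and the step I expect to be the main obstacle, is $f\ge2$: here I would run a case analysis on $t=v_r(D)$ and $s=v_r(c)$ to describe the solution set of $D\alpha^2\equiv -c\pmod{r^f}$. For odd $r$ this set is empty unless the valuations are compatible (in particular $s-t$ must be even), and when nonempty it consists of at most two classes $\alpha\equiv\pm\alpha_0$ modulo some $r^{\,f-j}$; lifting these to $r^f$ turns $T$ into a bounded sum of phases times the geometric sum $r^{\,j}\mathbf{1}[r^{\,j}\mid b]$. Matching the exponent $j$ and the forced divisibility of $b$ against $g=(r^f,b,c,D)=r^{\min(f,v_r(b),s,t)}$ then yields \eqref{C} in each branch; crucially, the power of $r$ produced by the solution count is always $\le r^{(f+t)/2}=r^{f/2}g^{1/2}$, so the bound is safe precisely because the solution set is largest exactly when $g$ is large. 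For $r=2$ the sole change is that a unit square may have up to four square roots modulo $2^f$ (once $f\ge3$), doubling the number of progressions and hence the constant, which is what produces the factor $4$ in \eqref{D}. The delicate part throughout is the careful bookkeeping of valuations, ensuring that in every case the contribution is absorbed by $r^{f/2}g^{1/2}$.
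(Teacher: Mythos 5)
Your argument is correct and follows essentially the same route as the paper: both reduce $S_1$ and $S_2$ to the one-variable sum $r^f\sum_{\alpha}e(b\alpha/r^f)$ over the roots of $D\alpha^2\equiv -c\pmod{r^f}$ with $D=d\overline{q}m_2$ (you via additive characters, the paper by solving for $\gamma$ and then $\beta$ directly), extract a quadratic Gauss sum of modulus $p^{1/2}$ for $S_1$, and carry out the same valuation bookkeeping in $s=v_r(c)$, $t=v_r(D)$ for $f\ge 2$, with the extra factor for $r=2$ coming from the four square roots of a unit. The differences are purely notational.
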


\begin{proof}

We start by the study of $S_2$.\\
We have

\begin{align}\label{delta}
S_2(r,q,m_2;b,c,d)&=\displaystyle\sum_{\alpha,\beta=1}^{r}e\left(\dfrac{b\alpha+c\beta+d\overline{q}m_2\alpha^2\beta}{r}\right)\notag\\
&=\displaystyle\sum_{\alpha=1}^{r}e\left(\dfrac{b\alpha}{r}\right)\displaystyle\sum_{\beta=1}^{r}e\left(\dfrac{(c+dm_2\overline{q}\alpha^2)\beta}{r}\right)\notag\\
&=\displaystyle\sum_{\alpha=1}^{r}e\left(\dfrac{b\alpha}{r}\right)\delta(c+dm_2\overline{q}\alpha^2,r),
\end{align}
where

$$
\delta(x,n)=
\begin{cases}
n \text{ if $n\mid x$,}\\
0 \text{ otherwise.}
\end{cases}
$$
If $r\mid dm_2$, equation \eqref{delta} becomes

$$S_2(r,q,m_2;b,c,d)=\displaystyle\sum_{\alpha=1}^{r}e\left(\dfrac{b\alpha}{r}\right)\delta(c,r)=\delta(b,r)\delta(c,r).$$
So, $S_2(r,q;b,c,d)=0$ unless $r$ divides both $b$ and $c$, in which case,

$$S_2(r,q,m_2;b,c,d)=r^2=r(r,b,c,dm_2).$$
That proves \eqref{B} when $r\mid dm_2$.	

We now assume $r\nmid dm_2$. We analyze when the symbol $\delta(c+dm_2\overline{q}\alpha^2)$ is non-zero. That means we consider the equation

$$dm_2\alpha^2\equiv -cq\!\!\!\!\pmod{r}.$$
This equation has at most two solutions for $1\leq \alpha\leq r$. Thus, again by equation \eqref{delta},

$$\left\vert S_2(r,q,m_2;b,c,d)\right\vert=\left\vert \displaystyle\sum_{\alpha=1}^{r}e\left(\dfrac{b\alpha}{r}\right)\delta(c+dm_2\overline{q}\alpha^2,r)\right\vert\leq 2r,$$
which completes the proof of \eqref{B} in Lemma \ref{expsums}.

We proceed to prove the inequalities \eqref{C} and \eqref{D}
Analogously to the previous case, we have

$$S_2(r^f,q,m_2;b,c,d)=\displaystyle\sum_{\alpha=1}^{r^f}e\left(\dfrac{b\alpha}{r^f}\right)\delta(c+dm_2\overline{q}\alpha^2,r^f).$$
We write $(c,r^f)=r^s$, $(dm_2,r^f)=r^t$, So we have $0\leq s,t\leq f$\\

If $s<t$, for any $\alpha$, the biggest power of $r$ that divides $c+dm_2\overline{q}\alpha^2$ is always $r^s$. So the $\delta$ symbol is always zero. Hence the sum is itself always zero.\\

So, we suppose $s\geq t$.\\
In this case, we write $c=r^s\tilde{c}, dm_2=r^t\tilde{d}$. Then, the condition $r^f\mid c+dm_2\overline{q}\alpha^2$ is equivalent to

\begin{equation}\label{*}
r^{f-t}\mid r^{s-t}\tilde{c}+\tilde{d}\overline{q}\alpha^2.
\end{equation}
Notice that for any $\alpha$ for which \eqref{*} is true, we must have

$$r^u\mid \alpha,$$
where $u=\lceil\frac{s-t}{2}\rceil$. We write $\alpha=r^u\tilde{\alpha}$, with $1\leq \tilde{\alpha}\leq r^{f-u}$. Condition \eqref{*} now translates to

\begin{equation}\label{**}
r^{f-s}\mid \tilde{c}+r^{2u-s+t}\tilde{d}\overline{q}\tilde{\alpha}^2,
\end{equation}
which has at most $2$ solutions for $\tilde{\alpha}\!\!\!\pmod{r^{f-s}}$, if $r$ is odd.

\begin{rmk}\label{r=2}
If $r=2$ the quadratic equation above can have up to $4$ solutions $\pmod{r^{f-s}}$, which is in fact the only difference between the cases $r$ odd and $r=2$.
\end{rmk}
In the following, we write down the proof of \eqref{C}. The proof of \eqref{D} is exactly the same but taking into account Remark \ref{r=2}. We now have

\begin{align}\label{S2end}
S_2(r^f,q,m_2;b,c,d)&= r^f\displaystyle\sum_{\tilde{\alpha}=1}^{r^{f-u}}{}^{'}e\left(\dfrac{br^u\tilde{\alpha}}{r^f}\right)\notag\\
&=r^f\displaystyle\sum_{\tilde{\alpha}=1}^{r^{f-u}}{}^{'}e\left(\dfrac{b\tilde{\alpha}}{r^{f-u}}\right)\notag\\
&=r^f\displaystyle\sum_{\tilde{\alpha}=1}^{r^{f-s}}{}^{'}e\left(\dfrac{b\tilde{\alpha}}{r^{f-u}}\right)\displaystyle\sum_{h=0}^{r^{s-u}}e\left(\dfrac{br^{f-s}h}{r^{f-u}}\right)\notag\\
&=r^f\delta(b,r^{s-u})\displaystyle\sum_{\tilde{\alpha}=1}^{r^{f-s}}{}^{'}e\left(\dfrac{b\tilde{\alpha}}{r^{f-u}}\right),
\end{align}
where the $'$ in the sum means that we only sum over the $\tilde{\alpha}$ satisfying \eqref{**}. From \eqref{S2end}, we see that $S_2(r^f,q,m_2;b,c,d)$ is zero unless

\begin{equation}\label{power-b}
r^{s-u}\mid b.
\end{equation}
So, we assume \eqref{power-b}. Then, \eqref{S2end} gives the inequality

\begin{equation}\label{S2endend}
\vert S_2(r^f,q,m_2;b,c,d)\vert \leq 2r^{f+s-u}.
\end{equation}
And since $u\geq \frac{s-t}{2}$,

$$f+s-u\leq f+\frac{s+t}{2}\leq \frac{3f}{2}+\frac{t}{2}.$$
As a consequence, \eqref{S2endend} becomes

\begin{equation}
\vert S_2(r^f,q,m_2;b,c,d)\vert\leq 2r^{\frac{3f}{2}}(r^t)^{\frac{1}{2}}.
\end{equation}
We want to prove that

\begin{equation}\label{gcd}
(r^f,b,c,dm_2)=r^t.
\end{equation}
Since $t\leq s$ holds, equation \eqref{power-b} tell us that we only need to prove that
\begin{equation}\label{t-smallest}
t\leq s-u 
\end{equation}

If $s$ and $t$ have the same parity, $u=\frac{s-t}{2}$ and \eqref{t-smallest} is equivalent to

$$t\leq s$$
which we are assuming.

If $s$ and $t$ have different parity, $u=\frac{s-t+1}{2}$ and \eqref{t-smallest} is equivalent to

$$t+1\leq s$$
which is a consequence of $t\leq s$ and the fact that the parities are distinct.
Thus, \eqref{gcd} is true. This completes the proof of \eqref{C} and \eqref{D}

At last, for \eqref{zero}, \eqref{A}, we write

\begin{align*}
S_1(p,q,m_2;b,c,d)&=\displaystyle\sum_{\alpha,\beta=1}^{p}\displaystyle\sum_{h=1}^{p-1}\left(\dfrac{h}{p}\right)e\left(\dfrac{b\alpha+c\beta+d\overline{q}(m_2\alpha^2\beta-h)}{p}\right)\\
&=\left(\displaystyle\sum_{h=1}^{p-1}\left(\dfrac{h}{p}\right)e\left(\dfrac{-d\overline{q}h}{p}\right)\right)S_2(p,q,m_2;b,c,d),
\end{align*}
and the result follows from \eqref{B} and the well-known bound for the Gauss sum, except, possibly, when $d=p$. But in this case,

$$\displaystyle\sum_{h=1}^{p-1}\left(\dfrac{h}{p}\right)e\left(\dfrac{-dm_2\overline{q}h}{p}\right)=\displaystyle\sum_{h=1}^{p-1}\left(\dfrac{h}{p}\right)=0,$$
this gives \eqref{zero} and thus, it also gives \eqref{A} in this case.
\end{proof}

\end{document}